\DeclareMathOperator{\spt}{spt}
\DeclareMathOperator{\esssup}{esssup}
\newcommand{\Lam}{\Lambda}
\newcommand{\w}{\omega}
\newcommand{\tht}{\theta}
\newcommand{\Tht}{\Theta}
\newcommand{\al}{\alpha}
\newcommand{\eps}{\epsilon}
\newcommand{\be}{\beta}
\newcommand{\ph}{\varphi}
\newcommand{\De}{\Delta}
\newcommand{\de}{\delta}
\newcommand{\s}{\sigma}
\newcommand{\gam}{\gamma}
\newcommand{\kap}{\kappa}
\newcommand{\lam}{\lambda}
\newcommand{\Ph}{\Phi}
\newcommand{\veps}{\varepsilon}
\newcommand{\z}{\zeta}
\newcommand{\ind}{\mathbbm{1}}
\newcommand{\N}{\mathbb{N}}
\newcommand{\R}{\mathbb{R}}
\newcommand{\T}{\mathbb{T}}
\newcommand{\Z}{\mathbb{Z}}
\newcommand{\A}{\mathcal{A}}
\newcommand{\del}{\nabla}
\newcommand{\bdy}{\partial}
\newcommand{\Ri}{\mathcal{R}}
\newcommand{\Sq}{\mathscr{S}}
\newcommand{\til}[1]{\widetilde{#1}}
\newcommand{\lp}{\triangle}
\newcommand{\lpj}{\triangle_j}
\newcommand{\lpjc}{\til{\triangle}_j}
\newcommand{\lpk}{\triangle_k}
\newcommand{\lpl}{\triangle_\ell}
\newcommand{\smod}{\setminus}
\newcommand{\sub}{\subset}
\newcommand{\goesto}{\rightarrow}
\newcommand{\imb}{\hookrightarrow}
\newcommand{\Sch}{\mathcal{S}}
\newcommand{\Hper}{H_{\text{per}}^\s}
\newcommand{\Hdper}{\dot{H}_{\text{per}}^\s}
\newcommand{\Sob}[2]{\lVert#1\rVert_{#2}}
\newcommand{\abs}[1]{\lvert#1\rvert}
\newcommand{\req}[1]{(\ref{#1})}
\newcommand{\FLp}{F_{L^p}}
\newcommand{\FHg}{F_{H^{-\gam/2}}}
\newcommand{\THs}{\Tht_{H^\s}}
\newtheorem{thm}{Theorem}
\newtheorem{defn}{Definition}
\newtheorem{prop}{Proposition}[section]
\newtheorem{lemm}[prop]{Lemma}
\newtheorem{lemmm}{Lemma}[prop]
\newtheorem{rmk}{Remark}[section]
\newtheorem{coro}{Corollary}[prop]
\newtheorem*{sett}{Standing Hypotheses}
\numberwithin{equation}{section}
\title[determining form for subcritical SQG]
{A determining form for the subcritical surface quasi-geostrophic equation}
\author{Michael S. Jolly$^{1}$$^\dagger$}
\address{$^{1}$Department of Mathematics\\
Indiana University\\ Bloomington, IN 47405}
\author{Vincent R. Martinez$^{2}$}
\address{$^{2}$Department of Mathematics\\
Tulane University\\ New Orleans, LA 70118}
\author{Tural Sadigov$^{3}$}
\address{$^3$ Department of Mathematics and Sciences\\
SUNY Polytechnic Institute\\
Utica, NY 13502}
\author{Edriss S. Titi$^{4}$}
\address{$^4$ Department of Mathematics\\
Texas A\&M University\\
College Station, TX 77843-3368\\
Also:
Department of Computer Science and Applied Mathematics\\
Weizmann Institute of Science\\
Rehovot, 76100, Israel}
\email[M.S. Jolly]{msjolly@indiana.edu}
\email[V. R. Martinez]{vmartin6@tulane.edu $\dagger$}
\email[T. Sadigov]{sadigot@sunyit.edu}
\email[E.S. Titi]{titi@math.tamu.edu, edriss.titi@weizmann.ac.il}
\thanks{$\dagger$ denotes corresponding author}
\begin{document}

\maketitle
\date{\today}

\centerline{\it  This paper is dedicated to the memory of Professor George R. Sell.}

\begin{abstract}
We construct a determining form for the surface quasi-geostrophic (SQG) equation with subcritical dissipation. In particular, we show that the global attractor for this equation can be embedded in the long-time dynamics of an ordinary differential equation (ODE) called a determining form. Indeed, there is a one-to-one correspondence between the trajectories in the global attractor of the SQG equation and the steady state solutions of the determining form.  
The determining form is a true ODE in the sense that its vector field is Lipschitz.  This is shown by combining De Giorgi techniques and elementary harmonic analysis.  Finally, we provide elementary proofs of the existence of time-periodic solutions, steady state solutions, as well as the existence of finitely many determining parameters for the SQG equation. 

\end{abstract}

\section{Introduction}
The three-dimensional (3D) quasi-geostrophic (QG) equation, from which the surface quasi-geostrophic (SQG) equation is derived, is a classical equation in geophysics used to describe the motion of stratified, rapidly rotating flows (cf. \cite{pedlosky}).  It asserts the conservation of potential vorticity of the flow, subject to dynamical boundary conditions, and represents the departure, to lowest order, of the dynamics of the flow from the so-called state of \textit{geostrophic balance}, in which the pressure gradient and the Coriolis force are in balance.  In the special case where the potential vorticity is identically zero, the dynamics of the QG equation reduces entirely to the evolution of its restriction to the two-dimensional boundary, which is the SQG equation.  Specifically, for $\gam\in(0,2)$, the dissipative SQG equation in non-dimensional variables is given by
\begin{align}\label{sqg}
			\bdy_t\tht+\kap\Lam^\gam\tht+u\cdotp{\del}\tht=f,\quad u=\Ri^\perp\tht, \quad \tht(x,0)=\tht_0(x),
	\end{align}
subject to periodic boundary conditions with fundamental periodic domain $\T^2=[-\pi,\pi]^2$.  Here, $\tht$ represents the scalar buoyancy or surface temperature of a fluid, which is advected along the two-dimensional velocity vector field, $u$, and $f$ is a given external source of heat.  
The velocity is related to $\tht$ by a Riesz transform, $\Ri^\perp:=(-R_2,R_1)$, where the symbol of $R_j$ is given by $-i\xi_j/\abs{\xi}$, for $j=1,2$.  For $0<\gam\leq2$, we denote by $\Lam^\gam:=(-\De)^{\gam/2}$ the operator whose symbol is $\abs{\xi}^\gam$, while its prefactor, $\kap$, is some fixed positive quantity 
and appears due to the physical Ekman pumping at the boundary due to mathematical analytical reasons (cf. \cite{desj:gren98}).  The equation is said to be \textit{subcritical} when $\gam>1$, \textit{critical} when $\gam=1$, and \textit{supercritical} when $\gam<1$.  In this article, we will consider the subcritical case $\gam>1$.  In this mathematical setting, we assume that the initial data, $\tht_0$, and $f$ are spatially periodic with fundamental domain, $\T^2$ and with average zero over $\T^2$.  Consequently, and since $\del\cdotp u=0$, it is easy to see that $\frac{d}{dt}\int_{\T^2}\tht(x,t)\ dx=0$, and hence that $\int_{\T^2}\tht(x,t)\ dx=0$, for all $t\geq0$.  We point out that throughout this paper we assume that $f(x)$ is time-independent, except when we discuss time-periodic solutions to \req{sqg} (see Appendix \ref{app}).

Since its introduction to the mathematical community, by Constantin-Majda-Tabak in \cite{cmt}, the SQG equation has been extensively studied. 
By now global well-posedness in various function spaces has been established and the issue of global regularity resolved in all, but the supercritical case (cf. \cite{caff:vass, const:vic, const:wu:qgwksol, ctv, cz:vic, hdong, kis:naz, kis:naz:vol, resnick}).  The long-time behavior and existence of a global attractor have also been studied in both the subcritical and critical cases (cf. \cite{carrillo:ferreira, chesk:dai1, const:coti:vic, coti:zelati, ctv, ju:qgattract}), as has the existence of steady states \cite{dai:steady}.  Moreover, the finite-dimensionality of the long-time behavior in the subcritical and critical SQG flows have been studied in \cite{chesk:dai:critqg, chesk:dai:subcritqg}, where the existence of \textit{determining modes} (see discussion below and Appendix \ref{app}) is established, and also in \cite{ctv, wang:tang}, where upper bounds for the fractal dimension of the global attractor of the critical and subcritical SQG equations are obtained.

The study of \textit{determining forms} was initiated in \cite{fkjt1, fkjt2} for the 2D NS equations (NSE).  A determining form is an ordinary differential equation (ODE) in an \textit{infinite-dimensional} Banach space of trajectories, which subsumes the dynamics of the original equation in a certain way.  A stronger expression of the finite-dimensionality of long-time behavior of dissipative evolutionary equations is the existence of an \textit{inertial manifold}, which is a finite-dimensional Lipschitz manifold that contains the global attractor of the original system and, moreover, attracts all solutions at an exponential rate (cf. \cite{cfnt, foias:sell:temam, foias:sell:titi} and references therein).  Restricted to the inertial manifold, the dynamics of the original system reduces to a finite-dimensional ODE, known as an \textit{inertial form}, in a \textit{finite-dimensional} phase space.  In fact, if an inertial manifold does exist, then knowledge of the low modes of the solution at a single point in time, enslaves the higher modes for \textit{all time}.  
However, while the existence of an inertial manifold has been established for a large class of dissipative equations, e.g., certain reaction-diffusions systems and the Kuramoto-Sivashinsky equation, it has been an outstanding open problem for the  2D NSE since the 1980s (cf. \cite{temam:dynsys}). 
It is as well open for the 1D damped, driven nonlinear Schr\"odinger (NLS), Korteweg de Vries equations (KdV), and 2D dissipative SQG equation.  Nevertheless, we show here that a determining form exists for the subcritical SQG equation.  

Currently, there are two ways to construct a determining form.  Determining forms of the ``first kind" encode as traveling waves,  projections of trajectories on the global attractor of the original system onto a sufficiently large, but fixed number of Fourier modes \cite{fkjt1}.  Alternatively, determining forms of the ``second kind," which is a more general approach, encode such trajectories as steady states \cite{fkjt2}.  In dissipative systems for which {determining modes} exist, the low Fourier modes of a {trajectory} on the global attractor, $\A$, suffice to characterize all higher modes of the trajectory.  More precisely, if two solutions on $\A$ agree up to a certain number of Fourier modes \textit{for all time}, then they must be the same solution, thus {defining a lifting, $W$, that maps the \textit{projection} of any trajectory in $\A$ to a unique trajectory in $\A$}.  An essential step in constructing a determining form is the extension of this lifting map to a certain Banach space.  We stress that the minimum number of such Fourier modes is \textit{independent} of the solution and depends only on the physical parameters of the equation.

Existence of a determining form of the second kind has also been established for the 1D damped, driven NLS equation \cite{jolly:sadigov:titi1} and damped, driven KdV equation \cite{jolly:sadigov:titi2}, both of which have determining modes (cf. \cite{goubet, jolly:sadigov:titi1, jolly:sadigov:titi2}).  Recently, it has been shown that the determining form of the second kind has the remarkable property that its solution can be parametrized by a \textit{single parameter}, whose evolution is governed by a \textit{one-dimensional} ODE, called the ``characteristic parametric determining form" (cf. \cite{fjlt}).  Indeed, the steady states of the characteristic parametric determining form can be used to characterize trajectories on the corresponding global attractor of the original equation.  {{}{This may lead to a novel}} approach to studying the geometry of the global attractor, a subject of ongoing research.

Finally, determining forms of the second kind provide a unified approach to determining forms in the sense that the modal projection can be replaced by a general interpolant operator depending on the type of determining parameters that exist for the original equation.  In the case of the 2D NSE, modes, nodes, and volume elements are all determining parameters (cf.  \cite{cjt1, cjt2, foias:prodi, foias:temam:nodes, jones:titi:nodes, jones:titi:volelts, jones:titi}), and as a result, each can induce determining forms of the second kind (cf. \cite{fkjt1}).  We refer also to \cite{holst:titi} for a notion of determining parameters for the 3D NSE.  This determining form was inspired by a ``feedback control" approach to data assimilation in \cite{azouani:titi, azouani:olson:titi}, where a defining model for a data assimilation algorithm was studied for the 1D Chaffee-Infante equation and 2D NSE, respectively. 
The defining model of the algorithm is a {{}{companion}} system to the original equation and is defined by inserting the collected observables of a reference solution into  the original model through a ``feedback control term" (see \req{sqg:da}).  The solution to this companion system synchronizes with the reference solution forward in time at an exponential rate.  The extension of the $W$ mapping follows a similar idea except that the initial time is taken backward to $-\infty$, as we will demonstrate here.

The main result of this paper, which establishes the existence of a determining form of the second kind for the subcritical SQG equation as well as various properties its solutions, is stated as Theorem \ref{thm:gen:detform}.  In particular, it is shown that there exist Banach spaces $X, Y$, a bounded set $B\sub X$, and Lipschitz vector field, $F:B\goesto X$, such that
	\begin{align}\label{df:ode}
		\frac{dv}{d\tau}=F(v),
	\end{align}
defines an ODE satisfying the following property: the steady states of \req{df:ode} that are contained in $B$ correspond in a one-to-one fashion to trajectories on the global attractor of \req{sqg}.  Moreover, there exists a subset $B'\sub B$ such that every solution of \req{df:ode} corresponding to $v_0\in B'$ must converge, as $\tau\goesto\infty$, to a steady state solution of \req{df:ode}.  In addition to ensuring other properties of \req{df:ode}, Theorem \ref{thm:gen:detform}, in fact, provides a general recipe for establishing a determining form for dissipative systems (cf. \cite{fkjt2, jolly:sadigov:titi1, jolly:sadigov:titi2}).  The crucial ingredients, namely the well-posedness theory of the underlying system and the Lipschitz property of the resulting map $W$ are established through Propositions \ref{da:exist} and \ref{W-Lip-2} in section \ref{sect:detform}; this is the heart of this paper.
The difficulties presented by the fractional dissipation in \req{sqg} are well-known and for the study undertaken here, amount to obtaining uniform $L^p$ estimates.  However, the $L^p$ bounds available for the companion equation depend on the number of modes required for their solutions to synchronize (see Proposition \ref{prop:lp}).  This renders these bounds unsuitable for showing that the evolution governed by \req{df} is well-defined and has a Lipschitz vector field (see section \ref{sect:detform}).  To overcome this obstacle, we employ De Giorgi-type estimates to bootstrap bounds from $L^2$ to $L^\infty$.  Coupled with elementary harmonic analysis techniques, we are able to show that the $L^\infty$ estimates thus obtained are indeed independent of the number of modes (see section \ref{sect:linfty}), ultimately allowing us to establish the properties required for existence.  These steps are detailed in section \ref{outline}, where we provide an outline of our approach.  Finally, in Appendix \ref{app}, we provide an elementary proof of existence of time-periodic solutions, steady state solutions, and finitely many determining parameters for \req{sqg}, which include as special cases determining modes and volume elements.  Our proof of the existence of steady state solutions complements the result in \cite{dai:steady}, where existence of such solutions is established when the domain is given as the whole plane, $\R^2$.  Our proof of the existence of finitely many determining parameters complements that of Cheskidov and Dai in \cite{chesk:dai:subcritqg} (see Remark \ref{app:rmk}), where a proof of existence of determining \textit{modes} is given using an approach inspired by the phenomenology of dissipation length scales from turbulence theory.  On the other hand, to the best of the authors' knowledge, the existence of time-periodic solutions to \req{sqg} with a time-periodic external source term appears to be new.

\section{Preliminaries}\label{sect:prelim}

\subsection{Function spaces: $L^p_{per}$, $V_\s$, $H_{{per}}^\s$, $\dot{H}_{{per}}^\s$ }\label{hs:def}
Let $1\leq p\leq\infty$, $\s\in \R$ and $\T^2=\R^2/(2\pi\Z)=[-\pi,\pi]^2$.
Let $\mathcal{M}$ denote the set of real-valued Lebesgue measureable
functions over $\T^2$.  Since we will be working with periodic functions,
define
	\begin{align}
		\mathcal{M}_{per}:=\{\phi\in\mathcal{M}:\phi(x,y)=\phi(x+2\pi,y)=\phi(x,y+2\pi)=\phi(x+2\pi,y+2\pi)\ \text{a.e.}\}.
	\end{align}
Let $C^\infty(\T^2)$ denote the class of functions which are infinitely
differentiable over $\T^2$.  Define $C^\infty_{per}(\T^2)$ by
	\begin{align}\notag
		C^\infty_{per}(\T^2):=C^\infty(\T^2)\cap\mathcal{M}_{per}.
	\end{align} 
For $1\leq p\leq\infty$, define the periodic Lebesgue spaces by
	\begin{align}\notag
		L^p_{per}(\T^2):=\{\phi\in\mathcal{M}_{per}: \Sob{\phi}{L^p}<\infty\},
	\end{align}
where
	\begin{align}\notag
		\Sob{\phi}{L^p}:=\left(\int_{\T^2} |\phi(x)|^p\ dx\right)^{1/p},\quad 1\leq p<\infty,\quad\text{and}\quad \Sob{\phi}{L^\infty}:=\ \stackrel[x\in\T^2]{}{\esssup}|\phi(x)|.
	\end{align}
Let us also define
	\begin{align}\label{Z:space}
		\mathcal{Z}:=\{\phi\in L^1_{per}:\int_{\T^2} \phi(x)\ dx=0\}.
	\end{align}

Let $\hat{\phi}(\mathbf{k})$ denote the Fourier coefficient of $\phi$ at wave-number $\mathbf{k}\in\Z^2$.  For any real number $\s\geq0$, define the homogeneous Sobolev space, $\Hdper(\T^2)$, by
	\begin{align}\label{Hdper}
		\Hdper(\T^2):=\{\phi\in L^2_{per}(\T^2)\cap\mathcal{Z}:\quad \Sob{\phi}{\dot{H}^\s}<\infty\},
	\end{align}
where
	\begin{align}\label{hdper:norm}
		{{} \Sob{\phi}{\dot{H}^\s}^{2}:=}\sum_{\mathbf{k}\in\Z^2\smod\{\mathbf{0}\}}\abs{\mathbf{k}}^{2\s}|\hat{\phi}(\mathbf{k})|^2.
	\end{align}
For $\s\geq0$, we define the inhomogeneous Sobolev space, $\Hper(\T^2)$, by
	\begin{align}\label{Hper}
		\Hper(\T^2):=\{\phi\in L^2_{per}(\T^2):\Sob{\phi}{H^\s}<\infty\},
	\end{align}
where
	\begin{align}\label{hper:norm}
		\Sob{\phi}{{H}^\s}^2:=\sum_{\mathbf{k}\in \Z^2}(1+\abs{\mathbf{k}}^2)^{\s}|\hat{\phi}(\mathbf{k})|^2.
	\end{align}

Let $\mathcal{V}_0\sub\mathcal{Z}$ denote the set of trigonometric polynomials with mean zero over $\T^2$ and set
	\begin{align}\label{hper}
		V_\s:=\overline{\mathcal{V}_0}^{H^\s},
	\end{align}
where the closure is taken with respect to the norm given by \req{hper:norm}.  
Observe that the mean-zero condition can be equivalently stated as $\hat{\phi}(\mathbf{0})=0$.  Thus, $\Sob{\cdotp}{\dot{H}^\s}$ and $\Sob{\cdotp}{H^\s}$ are equivalent as norms over $V_\s$.  Moreover, by  Plancherel's theorem we have
	\begin{align}\notag
		\Sob{\phi}{\dot{H}^\s}=\Sob{\Lam^\s \phi}{L^2}.
	\end{align}
Finally, for $\s\geq0$, we identify $V_{-\s}$ as the dual space, $(V_\s)'$, of $V_\s$, which can be characterized as the space of all bounded linear functionals, $\psi$, on $V_\s$ such that
	\[
		\Sob{\psi}{\dot{H}^{-\s}}<\infty.
	\]
Therefore, we have the following continuous embeddings
	\[
		V_\s\imb V_{\s'}\imb V_0\imb V_{-\s'}\imb V_{-\s},\quad 0\leq\s'\leq\s.
	\]

\begin{rmk}
Since we will be working over $V_\s$ and $\Sob{\cdotp}{\dot{H}^\s}$ and $\Sob{\cdotp}{H^\s}$ determine equivalent norms over $V_\s$, we will often denote $\Sob{\cdotp}{\dot{H}^\s}$ simply by $\Sob{\cdotp}{H^\s}$ for convenience. 
\end{rmk}

\subsection{Littlewood-Paley decomposition}\label{sect:LWP}
We define ``smooth spectral projection"  by the Littlewood-Paley decomposition.  We presently give a brief review of this decomposition.  More thorough treatments can be found in \cite{bcd, danchin:notes, runst:sick}.  We state the decomposition for ${\R^2}$, but point out that it is also valid in the case $\T^2$ for periodic distributions (see Remark \ref{periodic:rmk}).

Let $\psi_0$ be a smooth, radial bump function such that $\psi_0(\xi)=1$ when $[\abs{\xi}\leq 1/4]\sub{\R^2}$, and
	\begin{align}\label{psi:0}
		0\leq\psi_0\leq1\ \text{and}\ \spt\psi_0=\{\abs{\xi}\leq 1/2\}.
	\end{align}
Define ${\phi}_0(\xi):=\psi_0(\xi/2)-\psi_0(\xi)$.  Observe that
	\begin{align}\notag
		0\leq{\phi}_0\leq 1\ \text{and}\ \spt{\phi}_0=\{1/4\leq\abs{\xi}\leq1\}.
	\end{align}
Now for each integer $j\geq0$, define
	\begin{align}\label{psi:phi}
		{\phi}_j(\xi):={\phi}_0(\xi2^{-j}).
	\end{align}
Then, in view of the above definitions, we clearly have
	\begin{align}\label{lp:loc}
		\spt{\phi}_j=\{2^{j-2}\leq\abs{\xi}\leq2^{j}\}.
	\end{align}
If we let ${\phi}_{-1}:=\psi_0$ and ${\phi}_{j}\equiv0$ for $j<-1$, we observe that
	\begin{align}\label{pou}
		\sum_{j\in\Z}{\phi}_j(\xi)=1,\ \text{for }\ \xi\in{\R^2}.
	\end{align}
Let $g\in \Sch'(\R^2)$ be a tempered distribution over $\R^2$, then one can then define
	\begin{align}\label{lp:block:def}
		\lpk g:=h_k*g, \quad \til{\lp}_kg:=\sum_{|k-\ell|\leq 2}\lpl g,\quad S_kg:=\sum_{\ell\leq k}\lpl g,\quad T_k:=I-S_k,
	\end{align}
where $h_k:=\check{\phi}_k$ is the inverse Fourier transform of ${\phi}_k$.  We call the operators, $\lpk$, Littlewood-Paley projections.  For convenience, we will sometimes use the shorthand 
	\begin{align}\notag
		g_k:=\lpk g.
	\end{align}
 One can show that \req{pou} implies that
	\begin{align}\label{lp:dc}
		g=\sum_{j\geq-1}\lpj g\quad\text{for all}\ g\in \Sch'({\R^2}).
	\end{align}

For functions whose spectral support is compact, one has the Bernstein inequalities, which we will make frequent use of throughout the article.

\begin{prop}[Bernstein inequalities \cite{bcd}]\label{bern}
Let $1\leq p\leq q\leq\infty$ and $g\in\Sch'({\R^2})$, the dual space of the Schwartz space $\Sch(\R^2)$ .  There exists an absolute constant $C>0$, depending only on $\be, \phi_0, \psi_0$ such that for each $j\geq-1$ and $\be\in\R$, we have
	\begin{align}\notag
		\begin{split}
		&C^{-1}2^{j\be}\Sob{\lpj g}{L^q}\leq\Sob{\Lam^\be \lpj g}{L^q}\leq C2^{j(\be+2(1/p-1/q))}\Sob{\lpj g}{L^p}\\
				&\Sob{\Lam^\be S_jg}{L^q}\leq  C2^{j(\be+2(1/p-1/q))}\Sob{S_j g}{L^p}.
		\end{split}
	\end{align}
\end{prop}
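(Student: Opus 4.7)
The plan is to exploit the Fourier localization of $\lpj g$ and reduce to a scaling argument at frequency $\sim 1$. First, choose a smooth bump function $\til{\phi}_0$ supported in a slightly larger annulus, say $\{1/8 \leq |\xi| \leq 2\}$, with $\til{\phi}_0 \equiv 1$ on $\spt {\phi}_0 = \{1/4 \leq |\xi| \leq 1\}$, and set $\til{\phi}_j(\xi) := \til{\phi}_0(\xi/2^j)$. By \req{lp:loc}, $\til{\phi}_j \equiv 1$ on $\spt {\phi}_j$ for $j \geq 0$, so we can write
\begin{align*}
	\Lam^\be \lpj g = K_{\be,j} * \lpj g, \qquad K_{\be,j} := \F^{-1}(\abs{\xi}^\be \til{\phi}_j(\xi)).
\end{align*}
Because $\til{\phi}_0$ is smooth and compactly supported away from the origin, $K_\be := \F^{-1}(\abs{\xi}^\be \til{\phi}_0)$ is a Schwartz function, hence belongs to $L^r(\R^2)$ for every $r \in [1,\infty]$ with $\Sob{K_\be}{L^r}$ depending only on $\be,\til{\phi}_0$.

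A change of variable shows $K_{\be,j}(x) = 2^{j(\be+2)} K_\be(2^j x)$, so $\Sob{K_{\be,j}}{L^r} = 2^{j(\be + 2(1-1/r))} \Sob{K_\be}{L^r}$. Picking $r$ by Young's inequality $1 + 1/q = 1/r + 1/p$, equivalently $1 - 1/r = 1/p - 1/q$, yields the desired upper bound
\begin{align*}
	\Sob{\Lam^\be \lpj g}{L^q} \leq \Sob{K_{\be,j}}{L^r} \Sob{\lpj g}{L^p} \leq C\, 2^{j(\be + 2(1/p-1/q))} \Sob{\lpj g}{L^p}.
\end{align*}
The lower bound $2^{j\be} \Sob{\lpj g}{L^q} \leq C \Sob{\Lam^\be \lpj g}{L^q}$ follows by applying the same argument to $\Lam^{-\be}$: since $\Lam^{-\be} \Lam^\be \lpj g = \lpj g$ and the convolution kernel $K_{-\be,j}$ is again Schwartz due to the annular support, one obtains $\Sob{\lpj g}{L^q} \leq C\, 2^{-j\be} \Sob{\Lam^\be \lpj g}{L^q}$. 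The estimate for $S_j g$ is proved identically, with $\til{\phi}_j$ replaced by a smooth bump equal to $1$ on $\{\abs{\xi}\leq 2^{j-1}\}$ and supported in $\{\abs{\xi}\leq 2^{j+1}\}$; the key point is that this cutoff's inverse Fourier transform, scaled by $2^j$, is still Schwartz.

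The main obstacle is the low-frequency block $j = -1$ combined with negative $\be$, since $\spt \phi_{-1} = \spt \psi_0$ contains the origin where $\abs{\xi}^\be$ is singular. For $\be \geq 0$ there is no issue. For $\be < 0$, we use the mean-zero assumption available in the context of the paper: all functions in $V_\s$ satisfy $\hat{g}(\mathbf{0}) = 0$, so one may replace $\psi_0$ by $\psi_0 - \psi_0(0)\cdot \ind_{\{\mathbf{k}=\mathbf{0}\}}$ in the multiplier without changing $\Delta_{-1} g$, restoring annular support and making the preceding argument go through. Finally, although the Littlewood--Paley decomposition is stated on $\R^2$, the same bounds transfer to $\T^2$ either by viewing periodic distributions as tempered distributions on $\R^2$ supported on the lattice dual and invoking that convolution with a Schwartz kernel yields the same pointwise estimate, or equivalently by replacing $K_{\be,j}$ with its periodization and applying Young's inequality on $\T^2$ directly.
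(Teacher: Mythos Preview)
The paper does not supply its own proof of this proposition; it is quoted directly from \cite{bcd} as a standard tool. Your argument is precisely the classical proof one finds there: exploit the annular Fourier support via a fattened cutoff, observe that the resulting multiplier has a Schwartz convolution kernel, scale, and apply Young's inequality. So there is nothing to compare---your proof is correct and matches the cited source.
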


\begin{rmk}\label{sob:to:bes}
Observe that the Bernstein inequalities provide a convenient characterization of Sobolev spaces, $\dot{H}^\be(\T^2)$, for any $\be\in\R$.  Indeed, for $g\in\dot{H}^{\be}$, we have
	\begin{align}\notag
		C^{-1}\sum_{j\geq0}2^{2j\be}\Sob{\lpj g}{L^2}^2\leq\Sob{g}{\dot{H}^\be}^2\leq C\sum_{j\geq0}2^{2j\be}\Sob{\lpj g}{L^2}^2,
	\end{align}
for some absolute constant, $C>0$, which depends only on $\be$.  This can easily be proved by applying the Plancherel theorem and using the fact that $g=\sum_{j\geq0}\lpj g$ for periodic, mean zero functions over $\T^2$.
\end{rmk}

We will also make crucial use of the Littlewood-Paley inequality (cf. \cite{graf, workman}).  To state it, we first define the Littlewood-Paley square function.
	\begin{align}\label{lp:sq}
		\Sq g:=\left(\sum_j(\lpj g)^2\right)^{1/2}\quad\text{and}\quad\til{\Sq} g:=\left(\sum_{j}(\lpjc g)^2\right)^{1/2}.
	\end{align}
We will invoke it in the following form.

\begin{prop}[Littlewood-Paley Inequality]\label{prop:lpthm}
Let $1<p<\infty$.  Then $\Sq$ is a bounded operator in $L^p(\R^2)$ such that
	\begin{align}\label{lpthm1}
		\Sob{\Sq g}{L^p}\leq C_p\Sob{g}{L^p},
	\end{align}
where $C_p:=C'\max\{p,(p-1)^{-1}\}$, for some absolute constant $C'>0$, independent of $p$.
\end{prop}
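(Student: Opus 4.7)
The plan is to prove the Littlewood-Paley inequality by randomization via Rademacher functions combined with a Hörmander-Mihlin (Calderón-Zygmund) multiplier estimate, which is the standard route that produces the sharp $p$-dependence $C_p = C'\max\{p,(p-1)^{-1}\}$. First I would introduce the Rademacher system $\{r_j\}_{j\geq-1}$ on $[0,1]$ and, for each $t\in[0,1]$, define the randomized projection
\[
T_t g := \sum_{j\geq -1} r_j(t)\,\lpj g,
\]
which is a Fourier multiplier with symbol $m_t(\xi) = \sum_j r_j(t)\,\phi_j(\xi)$. The key observation is that by the locally finite support property \req{lp:loc}, at any $\xi\neq 0$ at most two of the $\phi_j$ are nonzero, and each $\phi_j$ satisfies $|\partial^\alpha \phi_j(\xi)| \leq C_\alpha 2^{-j|\alpha|} \lesssim C_\alpha |\xi|^{-|\alpha|}$ on its support. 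Consequently, uniformly in $t\in[0,1]$, the symbol $m_t$ satisfies the Hörmander-Mihlin hypothesis
\[
|\partial^\alpha m_t(\xi)| \leq C_\alpha |\xi|^{-|\alpha|}, \qquad |\alpha|\leq 2,
\]
on $\R^2 \smod \{0\}$, with constants depending only on $\phi_0$ and $\psi_0$.

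Next I would invoke the Hörmander-Mihlin multiplier theorem to conclude that for each $t\in[0,1]$, $T_t$ is bounded on $L^p(\R^2)$ with
\[
\Sob{T_t g}{L^p} \leq C'\max\{p,(p-1)^{-1}\}\,\Sob{g}{L^p},
\]
uniformly in $t$. The explicit $p$-dependence $\max\{p,(p-1)^{-1}\}$ is the well-known consequence of the Marcinkiewicz interpolation argument applied to the weak-$(1,1)$ estimate of the underlying Calderón-Zygmund operator together with its $L^2$ boundedness.

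Then I would combine this with Khintchine's inequality: for any sequence $(a_j)$ of real numbers and any fixed $p \in (1,\infty)$,
\[
c\,\Bigl(\sum_{j}|a_j|^2\Bigr)^{1/2} \leq \Bigl(\int_0^1 \Bigl|\sum_{j}r_j(t)a_j\Bigr|^p dt\Bigr)^{1/p} \leq C\sqrt{p}\,\Bigl(\sum_{j}|a_j|^2\Bigr)^{1/2},
\]
with absolute constants. Applying this pointwise to $a_j = \lpj g(x)$, raising to the $p$-th power, integrating over $x\in\R^2$, and using Fubini yields
\[
\int_{\R^2}(\Sq g(x))^p\,dx \lesssim \int_0^1 \Sob{T_t g}{L^p}^p\,dt \leq \bigl(C'\max\{p,(p-1)^{-1}\}\bigr)^p \Sob{g}{L^p}^p,
\]
which is the desired bound after taking the $p$-th root (absorbing the $\sqrt{p}$ from Khintchine into the dominant factor $\max\{p,(p-1)^{-1}\}$ by adjusting $C'$).

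The main obstacle is really bookkeeping rather than a conceptual difficulty: one must verify that the randomized symbol $m_t$ genuinely satisfies the Mihlin condition with $t$-independent constants (which is clean thanks to the finite overlap in \req{lp:loc}) and then extract the correct $p$-dependence from the CZ machinery. An alternative, equally viable route would bypass randomization by presenting $\{\lpj\}_j$ directly as a vector-valued $\ell^2$-valued singular integral and applying the vector-valued Calderón-Zygmund theorem; the constant tracking is identical.
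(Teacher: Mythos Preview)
The paper does not actually prove this proposition; it is stated as a classical result with references to \cite{graf, workman} and used as a black box. Your proposal is precisely the standard textbook argument (essentially what appears in Grafakos), and it is correct.

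One minor bookkeeping point: in the direction you need, namely
\[
(\Sq g(x))^p \lesssim \int_0^1 |T_t g(x)|^p\,dt,
\]
the relevant Khintchine constant is the \emph{lower} one $A_p$, which is bounded below by an absolute constant for all $p\geq 1$. So there is in fact no extra $\sqrt{p}$ factor to absorb; the entire $p$-dependence comes from the H\"ormander--Mihlin bound on $T_t$, and the stated form $C_p = C'\max\{p,(p-1)^{-1}\}$ follows directly.
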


\begin{rmk}By \req{lp:block:def} and the Minkowski inequality, note that the analogous statement for $\til{\Sq}$ holds as well.
\end{rmk}

\begin{rmk}\label{periodic:rmk}
Consider the same bump functions $\{\phi_j\}_{j\in\Z}$ as specified above, which satisfied \req{psi:0}-\req{pou}.  The Bernstein inequality (Proposition \ref{bern}) and Littlewood-Paley inequality (Proposition \ref{prop:lpthm})  also hold in the case of functions, $g$, which are periodic over $\T^d$, provided that we replace the Littlewood-Paley operators by
	\begin{align}
		\lpj^{per}g(x)=\frac{1}{(2\pi)^d}\int_{\T^d}\ph^{per}_j(y)g(x-y)\ dy,\quad\text{where}\quad  \ph^{per}_j(x)=\sum_{\mathbf{k}\in\Z^d}\phi_j(\mathbf{k})e^{i\mathbf{k}\cdotp x}.
	\end{align}
We refer to \cite{bcd, danchin:notes} for the Bernstein inequality and to \cite{workman} for a thorough treatment of classical harmonic analysis operators in the periodic setting, which includes the Littlewood-Paley square function.
\end{rmk}

\subsection{Inequalities for fractional derivatives}

We will make use of the following bound for the fractional Laplacian, which can be found for instance in \cite{const:gh:vic, ctv, ju:qgattract}.

\begin{prop}\label{lb}
Let $p\geq2$, $0\leq\gam\leq2$, and $g\in C_{per}^\infty(\T^2)$. Then 
	\begin{align}\notag
		\int_{\T^2}|g|^{p-2}(x)g(x)(\Lam^\gam g)(x)\ dx\geq \frac{2}p\Sob{\Lam^{\gam/2}(|g|^{p/2})}{L^2}^2,
	\end{align}
If additionally $p$ is an even integer and $\phi\in\mathcal{Z}$, then
	\begin{align}\notag
		\int_{\T^2}g^{p-1}(x)(\Lam^\gam g)(x)\ dx\geq C\Sob{g}{L^p}^p+\frac{1}p\Sob{\Lam^{\gam/2}(g^{p/2})}{L^2}^2,
	\end{align}
holds, for some $C=C(\gam)$, independent of $p$.
\end{prop}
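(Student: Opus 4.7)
\medskip

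\noindent\textbf{Proof plan for Proposition~\ref{lb}.}

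For the first inequality, the natural approach is through the singular-integral representation of the fractional Laplacian. Extending $g$ periodically to $\R^2$, one has
$$\Lambda^\gamma g(x) \;=\; c_\gamma\,{\rm P.V.}\!\int_{\R^2}\!\frac{g(x)-g(y)}{|x-y|^{2+\gamma}}\,dy,\qquad 0<\gamma<2.$$
Multiplying by $|g|^{p-2}g(x)$, integrating over $\T^2$, and symmetrizing in $(x,y)$ yields
$$\int_{\T^2}\!|g|^{p-2}g\,\Lambda^\gamma g\,dx \;=\; \frac{c_\gamma}{2}\!\int_{\T^2}\!\!\int_{\R^2}\!\frac{(|g|^{p-2}g(x)-|g|^{p-2}g(y))(g(x)-g(y))}{|x-y|^{2+\gamma}}\,dy\,dx.$$
The heart of the matter is the elementary pointwise inequality, valid for $a,b\in\R$ and $p\ge 2$,
$$(|a|^{p-2}a - |b|^{p-2}b)(a-b) \;\geq\; \tfrac{4(p-1)}{p^{2}}\bigl(|a|^{p/2}\mathrm{sgn}(a) - |b|^{p/2}\mathrm{sgn}(b)\bigr)^{2} \;\geq\; \tfrac{4(p-1)}{p^{2}}\bigl(|a|^{p/2}-|b|^{p/2}\bigr)^{2},$$
which I would prove from the identity $(|a|^{p-2}a-|b|^{p-2}b)(a-b) = (p-1)(a-b)^{2}\!\int_{0}^{1}|b+t(a-b)|^{p-2}\,dt$ followed by Cauchy--Schwarz, and from the trivial bound $(s_{a}\alpha - s_{b}\beta)^{2}\ge(\alpha-\beta)^{2}$ for $\alpha,\beta\ge 0$, $s_{a},s_{b}\in\{-1,0,1\}$. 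Recognizing the resulting $\int\!\!\int(|g|^{p/2}(x)-|g|^{p/2}(y))^{2}|x-y|^{-2-\gamma}\,dy\,dx$ as $\tfrac{2}{c_\gamma}\|\Lambda^{\gamma/2}(|g|^{p/2})\|_{L^{2}}^{2}$ and using $\tfrac{4(p-1)}{p^{2}}\ge \tfrac{2}{p}$ for $p\ge 2$ (equivalent to $(p-2)^{2}\ge 0$) gives the first bound. The boundary cases $\gamma=0$ (trivial) and $\gamma=2$ (direct integration by parts) are handled separately.

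For the second inequality, restricted to even integer $p$ and mean-zero $g$, I would rewrite the Dirichlet form using the periodized kernel
$$K_{\mathrm{per}}(u)\;:=\;c_\gamma\sum_{n\in\Z^{2}}|u-2\pi n|^{-2-\gamma},$$
so that $\int_{\T^{2}}g^{p-1}\Lambda^\gamma g\,dx=\tfrac{1}{2}\int\!\!\int_{\T^{2}\times\T^{2}}(g^{p-1}(x)-g^{p-1}(y))(g(x)-g(y))\,K_{\mathrm{per}}(x-y)\,dy\,dx$. Decompose $K_{\mathrm{per}} = K_{\mathrm{sing}}+K_{\mathrm{reg}}$, where $K_{\mathrm{sing}}$ carries the $|u|^{-2-\gamma}$ singularity at $u=0$ and $K_{\mathrm{reg}}$ is smooth and bounded below by some $c_{0}=c_{0}(\gamma)>0$, obtained by crudely bounding the $n\neq 0$ terms uniformly on $\T^{2}$. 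Applying the first-inequality pointwise bound to the $K_{\mathrm{sing}}$ piece produces (after absorbing half of the constant $\tfrac{4(p-1)}{p^{2}}$) at least $\tfrac{1}{p}\|\Lambda^{\gamma/2}(g^{p/2})\|_{L^{2}}^{2}$. For the $K_{\mathrm{reg}}$ piece, since $g\mapsto g^{p-1}$ is monotone for even $p$, the integrand $(g^{p-1}(x)-g^{p-1}(y))(g(x)-g(y))\ge 0$, and hence
$$\tfrac{1}{2}\!\int\!\!\int (g^{p-1}(x)-g^{p-1}(y))(g(x)-g(y))K_{\mathrm{reg}}(x-y)\,dy\,dx \;\geq\; \tfrac{c_{0}}{2}\!\int\!\!\int (g^{p-1}(x)-g^{p-1}(y))(g(x)-g(y))\,dy\,dx.$$
Expanding the last double integral gives $2|\T^{2}|\|g\|_{L^{p}}^{p}-2(\int g)(\int g^{p-1}) = 2|\T^{2}|\|g\|_{L^{p}}^{p}$, since $\int g=0$, and this supplies the term $C\|g\|_{L^{p}}^{p}$ with $C=c_{0}|\T^{2}|$ depending only on $\gamma$.

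\emph{Anticipated obstacle.} The genuine subtlety is obtaining $C\|g\|_{L^{p}}^{p}$ with constant independent of $p$. A Poincar\'e estimate applied to $g^{p/2}$ does not work, because $g^{p/2}$ is typically \emph{not} mean-zero even when $g$ is (e.g.\ $g=\sin x$ with $p=4$), so one cannot simply split the first-inequality bound into two halves. The right move is to exploit the mean-zero property of $g$ \emph{itself}, together with the fact that the periodic structure of $\T^{2}$ yields a kernel with a strictly positive regular part. A second minor technicality is checking that the pointwise inequality of the first part transfers to $\|\Lambda^{\gamma/2}(g^{p/2})\|^{2}$ rather than $\|\Lambda^{\gamma/2}(|g|^{p/2})\|^{2}$; for even $p$, this is verified by the sign-matching computation above.
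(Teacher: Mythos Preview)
The paper does not prove Proposition~\ref{lb}; it is quoted from \cite{const:gh:vic, ctv, ju:qgattract}. So there is no ``paper's own proof'' to compare against, and your task is to supply one. Your plan for the first inequality is the standard C\'ordoba--C\'ordoba/Ju argument and is correct as written.

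For the second inequality your strategy (periodize the kernel, peel off the strictly positive regular part to manufacture $C\|g\|_{L^p}^p$, and use the mean-zero condition on $g$ rather than on $g^{p/2}$) is the right one, and your ``anticipated obstacle'' paragraph shows you understand exactly why a naive Poincar\'e argument fails. There is, however, one bookkeeping slip. You write that ``applying the first-inequality pointwise bound to the $K_{\mathrm{sing}}$ piece produces \dots at least $\tfrac{1}{p}\|\Lambda^{\gamma/2}(g^{p/2})\|_{L^2}^2$.'' That is not quite right: the double integral over $\T^2\times\T^2$ against $K_{\mathrm{sing}}$ alone is \emph{strictly smaller} than the Dirichlet form $\|\Lambda^{\gamma/2}(g^{p/2})\|_{L^2}^2$, which requires the full periodized kernel $K_{\mathrm{per}}=K_{\mathrm{sing}}+K_{\mathrm{reg}}$. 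So the $K_{\mathrm{sing}}$ piece by itself cannot deliver the full homogeneous $H^{\gamma/2}$ seminorm.

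The fix is immediate and costs nothing. Split $K_{\mathrm{reg}}=\tfrac{1}{2}K_{\mathrm{reg}}+\tfrac{1}{2}K_{\mathrm{reg}}$. Group $K_{\mathrm{sing}}+\tfrac{1}{2}K_{\mathrm{reg}}\ge \tfrac{1}{2}K_{\mathrm{per}}$ and apply the pointwise inequality there; this yields $\tfrac{2(p-1)}{p^2}\|\Lambda^{\gamma/2}(g^{p/2})\|_{L^2}^2\ge \tfrac{1}{p}\|\Lambda^{\gamma/2}(g^{p/2})\|_{L^2}^2$. On the remaining $\tfrac{1}{2}K_{\mathrm{reg}}\ge \tfrac{c_0}{2}$ piece run your expansion exactly as written to get $\tfrac{c_0}{2}|\T^2|\,\|g\|_{L^p}^p$, with constant depending only on $\gamma$. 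With this reallocation your argument is complete.
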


Later, we will derive a level-set inequality in the spirit of De Giorgi (cf. \cite{caff:vass}).  For this, we will make use of the following fact from \cite{chesk:dai1}.

\begin{prop}\label{lem:lvlsetpos}
Let $\lam>0$ and $g\in C^\infty_{per}(\T^2)$.  Then for $(g-\lam)_+=\max\{g-\lam,0\}$, we have
	\begin{align}\notag
		(\Lam^\gam g)(g-\lam)_+\geq (g-\lam)_+(\Lam^\gam(g-\lam)_+).
	\end{align}
\end{prop}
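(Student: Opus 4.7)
The plan is to reduce the inequality to showing that $\Lam^\gam m(x) \geq 0$ at points $x$ where $(g-\lam)_+(x) > 0$, where $m$ is a suitable truncation of $g$ from above. I would first use the decomposition
\[
g = (g-\lam)_+ + \min(g,\lam),
\]
which is valid pointwise. Setting $h:=(g-\lam)_+$ and $m:=\min(g,\lam)$, the linearity of $\Lam^\gam$ gives $\Lam^\gam g=\Lam^\gam h+\Lam^\gam m$, so the desired inequality rearranges to
\[
(g-\lam)_+(x)\,(\Lam^\gam m)(x)\ \geq\ 0\qquad\text{for all } x\in\T^2.
\]

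Now I would split into cases pointwise. If $h(x)=0$, the inequality is trivially $0\geq 0$. If $h(x)>0$, then $g(x)>\lam$, so $m(x)=\lam$. On the other hand, by definition $m(y)=\min(g(y),\lam)\leq\lam$ for every $y\in\T^2$. Hence $m(x)-m(y)\geq 0$ uniformly in $y$. At this stage I would invoke the pointwise singular integral representation of the fractional Laplacian on $\T^2$ for smooth functions, namely
\[
(\Lam^\gam m)(x) = c_\gam\,\mathrm{P.V.}\!\int_{\T^2} \bigl(m(x)-m(y)\bigr)\,K_\gam(x-y)\,dy,
\]
where the periodic kernel $K_\gam$ is strictly positive (it is obtained by periodizing the positive kernel $|z|^{-2-\gam}$ on $\R^2$, or equivalently as the fundamental solution from the Bochner subordination formula; see, e.g., the standard references on fractional powers of the Laplacian). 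Since the integrand is non-negative for every $y$ when $m(x)=\lam$, the principal value is actually a convergent Lebesgue integral and yields $(\Lam^\gam m)(x)\geq 0$, completing the case.

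Combining the two cases gives the claim pointwise everywhere on $\T^2$. The only step requiring genuine care is the justification of the integral representation of $\Lam^\gam$ in the periodic setting, together with the positivity of the periodized kernel $K_\gam$; once that is in hand, the proof is a one-line pointwise comparison. I do not expect any real obstacle here since $m$ is continuous and $g\in C^\infty_{per}(\T^2)$, so all principal-value integrals are absolutely convergent away from the singularity and the near-singularity cancellation is the standard one for $0<\gam<2$.
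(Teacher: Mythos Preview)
Your proposal is correct and is essentially the standard argument. The paper does not supply its own proof of this proposition; it simply cites the result from Cheskidov--Dai, so there is no paper-specific proof to compare against. Your decomposition $g=(g-\lam)_++\min(g,\lam)$ together with the pointwise singular integral representation with positive periodized kernel is precisely the route taken in that reference (and in the Caffarelli--Vasseur lineage it follows). One small remark: at points where $h(x)>0$ you have $m\equiv\lam$ in a full neighborhood of $x$ by continuity of $g$, so the integrand $m(x)-m(y)$ vanishes identically near the singularity and no principal-value cancellation is needed at all; the integral is an honest Lebesgue integral of a nonnegative function. This also sidesteps any worry about the local regularity of $m$ for $\gam\geq 1$, since $m$ is constant near $x$ in the only case where you need $\Lam^\gam m(x)$.
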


We will  also make use of the following calculus inequality for fractional derivatives (cf. \cite{ctv} and references therein):

\begin{prop}\label{prod:rule}
Let $g,h\in C_{per}^\infty(\T^2)$, $\be>0$, $p\in(1,\infty)$.  For $1/p=1/p_1+1/p_2=1/p_3+1/p_4$, and $p_2,p_3\in(1,\infty)$, there exists an absolute constant $C>0$ that depends only on $\be,p,p_i$ such that
	\begin{align}\notag
		\Sob{\Lam^\be(gh)}{L^p}\leq C\Sob{h}{L^{p_1}}\Sob{\Lam^\be g}{L^{p_2}}+C\Sob{\Lam^\be h}{L^{p_3}}\Sob{g}{L^{p_4}}.
	\end{align}
\end{prop}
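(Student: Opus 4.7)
The plan is to establish this Kato--Ponce-type product rule via Bony's paraproduct decomposition together with the Littlewood--Paley machinery of Section \ref{sect:LWP}. Writing
$$gh = T_g h + T_h g + R(g,h),$$
where the paraproduct $T_g h := \sum_j S_{j-2}g \cdot \lpj h$ has its derivative naturally absorbed into the high-frequency factor $h$, and the remainder $R(g,h) := \sum_j \lpj g \cdot \lpjc h$ collects the diagonal interactions, I will estimate each of the three pieces separately in $\Sob{\Lam^\be(\cdot)}{L^p}$. By the symmetry $T_g h \leftrightarrow T_h g$, these three contributions will precisely match the two terms on the right-hand side of the claim.

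For $T_g h$, the summand $S_{j-2}g \cdot \lpj h$ has Fourier support in an annulus of size $\sim 2^j$, so Bernstein (Proposition \ref{bern}) gives $\Sob{\Lam^\be(S_{j-2}g \cdot \lpj h)}{L^p} \ls 2^{j\be}\Sob{S_{j-2}g \cdot \lpj h}{L^p}$ and allows the full $L^p$ norm of $\Lam^\be T_g h$ to be dominated, after invoking the Littlewood--Paley inequality (Proposition \ref{prop:lpthm}) in its $\ell^2$-valued form, by
$$\Sob{\left(\sum_j 2^{2j\be}\lv S_{j-2} g\rv^2\, \lv \lpj h\rv^2\right)^{1/2}}{L^p}.$$
Since $S_{j-2}$ is convolution with a Schwartz-type kernel having an integrable radial majorant, $\lv S_{j-2}g\rv \leq C M g$ pointwise with $M$ the Hardy--Littlewood maximal function, and this can be pulled outside the square function. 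Then H\"older's inequality with $1/p = 1/p_3 + 1/p_4$, together with the maximal function bound on $L^{p_4}$ and the square-function characterization of $\Sob{\Lam^\be h}{L^{p_3}}$ via Proposition \ref{prop:lpthm} (valid since $p_3 \in (1,\infty)$; compare Remark \ref{sob:to:bes}), produces the bound $C\Sob{g}{L^{p_4}} \Sob{\Lam^\be h}{L^{p_3}}$. The symmetric estimate for $T_h g$ yields a bound by $C\Sob{h}{L^{p_1}}\Sob{\Lam^\be g}{L^{p_2}}$, which is the first term in the claim.

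The remainder $R(g,h)$ is the main obstacle, because now the product $\lpj g \cdot \lpjc h$ has Fourier support in a \emph{ball} of radius $\sim 2^j$ rather than in an annulus, so $\lpk R(g,h)$ receives contributions from all $j \geq k - C$. Applying Bernstein in this broadband form gives
$$\Sob{\Lam^\be \lpk R(g,h)}{L^p} \ls 2^{k\be} \sum_{j \geq k-C} \Sob{\lpj g \cdot \lpjc h}{L^p},$$
and since $\be > 0$ the geometric factor $2^{(k-j)\be}$ is summable---this is the precise place where the hypothesis $\be > 0$ is essential, and without it the remainder would be logarithmically divergent. Young's inequality for series on $\Z$ then transfers the $k$-sum onto the $j$-sum with uniform constant, and after H\"older (again with $1/p = 1/p_3 + 1/p_4$) and the square-function characterization of $\Sob{\Lam^\be h}{L^{p_3}}$, the resulting contribution can be absorbed into the second term of the claim. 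Summing the three contributions yields the proposition; the periodic setting requires no modification thanks to Remark \ref{periodic:rmk}.
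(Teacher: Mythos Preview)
The paper does not actually prove Proposition~\ref{prod:rule}; it is stated as a known ``calculus inequality for fractional derivatives'' with a citation to \cite{ctv} and the references therein, and is then simply used as a tool. So there is no proof in the paper to compare against.

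Your argument via Bony's paraproduct decomposition is the standard route to the Kato--Ponce inequality and is essentially correct. A couple of small points worth noting: (i) the square-function characterization $\Sob{\Lam^\be h}{L^{p_3}} \sim \Sob{(\sum_j 2^{2j\be}|\lpj h|^2)^{1/2}}{L^{p_3}}$ that you invoke is not quite immediate from Proposition~\ref{prop:lpthm} and Remark~\ref{sob:to:bes} alone (the latter treats only $L^2$); it requires the Mikhlin--H\"ormander multiplier theorem or an equivalent vector-valued argument, though this is of course standard; (ii) the maximal-function bound $\Sob{Mg}{L^{p_4}} \ls \Sob{g}{L^{p_4}}$ needs $p_4 > 1$, which does follow from $p\in(1,\infty)$ and $1/p = 1/p_3 + 1/p_4$ with $p_3<\infty$, but you might make this explicit. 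With those caveats, your outline is a complete and correct proof sketch of a result the paper takes for granted.
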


Finally, we will frequently apply the following interpolation inequality, which is a special case of the Gagliardo-Nirenberg interpolation inequality and can be proven with Plancherel's theorem and the Cauchy-Schwarz inequality.

\begin{prop}\label{interpol}
Let $g\in V_\be$ and  $0\leq \al\leq \be$.
Then there exists an absolute constant $C>0$ that depends only on $\al,\be$ such that
	\begin{align}\label{gn:ineq}
		\Sob{\Lam^{\al} g}{L^2}\leq C\Sob{\Lam^{\be}g}{L^2}^{\frac{\al}{\be}}\Sob{g}{L^2}^{1-\frac{\al}{\be}}.
	\end{align}
\end{prop}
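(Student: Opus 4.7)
The plan is to reduce the inequality to a weighted $\ell^2$ bound on Fourier coefficients and then apply H\"older's inequality. Since $g \in V_\be$ has vanishing mean, $\hat{g}(\mathbf{0}) = 0$, and Plancherel's theorem together with \req{hdper:norm} gives
\begin{align*}
\Sob{\Lam^\al g}{L^2}^2 = \sum_{\mathbf{k} \in \Z^2 \smod \{\mathbf{0}\}} \abs{\mathbf{k}}^{2\al} \abs{\hat{g}(\mathbf{k})}^2,
\end{align*}
and similarly for the $L^2$ norms of $g$ and $\Lam^\be g$. The endpoint cases $\al = 0$ and $\al = \be$ are immediate, so I may restrict attention to $0 < \al < \be$.

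The key step is the pointwise splitting, valid on the Fourier lattice,
\begin{align*}
\abs{\mathbf{k}}^{2\al}\abs{\hat g(\mathbf{k})}^2 = \bigl(\abs{\mathbf{k}}^{2\be}\abs{\hat g(\mathbf{k})}^2\bigr)^{\al/\be}\bigl(\abs{\hat g(\mathbf{k})}^2\bigr)^{1-\al/\be}.
\end{align*}
I would then apply H\"older's inequality to the sum over $\mathbf{k} \neq \mathbf{0}$ with conjugate exponents $p = \be/\al$ and $q = \be/(\be - \al)$, which satisfy $1/p + 1/q = 1$, to obtain
\begin{align*}
\sum_{\mathbf{k} \neq \mathbf{0}} \abs{\mathbf{k}}^{2\al} \abs{\hat g(\mathbf{k})}^2 \le \Bigl(\sum_{\mathbf{k} \neq \mathbf{0}} \abs{\mathbf{k}}^{2\be}\abs{\hat g(\mathbf{k})}^2\Bigr)^{\al/\be}\Bigl(\sum_{\mathbf{k} \neq \mathbf{0}} \abs{\hat g(\mathbf{k})}^2\Bigr)^{1-\al/\be},
\end{align*}
which, by the Plancherel identities above, is precisely $\Sob{\Lam^\be g}{L^2}^{2\al/\be}\Sob{g}{L^2}^{2(1-\al/\be)}$. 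Taking square roots yields \req{gn:ineq}, in fact with constant $C = 1$.

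There is no genuine obstacle: the argument is purely arithmetic once one fixes the geometric-mean splitting of the weight $\abs{\mathbf{k}}^{2\al}$. The author's remark that Cauchy--Schwarz suffices reflects that Cauchy--Schwarz is the $p = q = 2$ case of H\"older and can be iterated on dyadic subdivisions of $\al/\be$ when $\al/\be$ is a dyadic rational; the single H\"older step above handles all admissible $\al, \be$ in one shot. The mean-zero hypothesis inherited from $V_\be$ is used only to discard the zero Fourier mode, which is essential when one passes from $\Sob{g}{H^\be}$ to the homogeneous quantity $\Sob{\Lam^\be g}{L^2}$ in \req{hdper:norm}.
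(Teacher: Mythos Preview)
Your proof is correct and follows exactly the route the paper indicates (Plancherel plus an elementary inequality on the Fourier side); the paper does not spell out the argument beyond that hint, and your single H\"older step with exponents $\be/\al$ and $\be/(\be-\al)$ is the standard way to carry it out, yielding \req{gn:ineq} with $C=1$.
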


\subsection{Maximum principle and Global Attractor of SQG equation}\label{sect:ball}
Let us recall the following estimates for the reference solution $\tht$ (cf. \cite{ctv, ju:qgattract, resnick}).

\begin{prop}\label{prop:sqg:ball}
Let $\gam\in(0,2]$ and $\tht_0, f\in {L}^p_{per}(\T^2)\cap\mathcal{Z}$. 
Suppose that $\tht\in {L}_{per}^p(\T^2)$ is a smooth solution of \req{sqg} such that $\tht(\cdotp, 0)=\tht_0(\cdotp)$.  There exists an absolute constant $C>0$ such that for any $p\geq2$, we have
	\begin{align}\label{fp}
		\Sob{\tht(t)}{L^p}\leq \left(\Sob{\tht_0}{L^p}-\frac{1}C{\FLp}\right)e^{-C{{{\kap}}}t}+\frac{1}{C}{\FLp},\quad {\FLp}:=\frac{1}{{\kap}}\Sob{f}{L^p}.
	\end{align}
Moreover, for $p=2$ and $f\in V_{-\gam/2}$, we have
	\begin{align}\label{fgam}
		\Sob{\tht(t)}{L^2}^2\leq \left(\Sob{\tht_0}{L^2}^2-{\FHg^2}\right)e^{-{{{\kap}}} t}+{\FHg^2}, \quad \FHg:=\frac{1}{{\kap}}\Sob{f}{H^{-\gam/2}}.
	\end{align}
\end{prop}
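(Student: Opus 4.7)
The plan for both estimates is the standard $L^p$ energy method: test \req{sqg} against an appropriate power of $\theta$, eliminate the transport term by incompressibility of $u=\Ri^\perp\theta$, bound the dissipative term from below via Proposition~\ref{lb}, estimate the forcing with H\"older or duality, and close using Gronwall's inequality.

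For \req{fp}, I would test \req{sqg} against $|\theta|^{p-2}\theta$. Since $\del\cdotp u=0$, integration by parts gives
\begin{align*}
\int_{\T^2}(u\cdotp\del\theta)|\theta|^{p-2}\theta\,dx=\frac{1}{p}\int_{\T^2}u\cdotp\del(|\theta|^p)\,dx=0.
\end{align*}
When $p$ is an even integer, the second statement of Proposition~\ref{lb}, together with $\theta\in\cZ$, yields, after discarding the positive gradient contribution, $\int_{\T^2}\theta^{p-1}\Lam^\gam\theta\,dx\geq C\Sob{\theta}{L^p}^p$ with $C=C(\gam)$ \emph{independent} of $p$. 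H\"older's inequality on the forcing gives $\lv\int f\theta^{p-1}\,dx\rv\leq\Sob{f}{L^p}\Sob{\theta}{L^p}^{p-1}$. Assembling and dividing through by $p\Sob{\theta}{L^p}^{p-1}$ produces the scalar differential inequality
\begin{align*}
\ddt\Sob{\theta}{L^p}+C\kap\Sob{\theta}{L^p}\leq\Sob{f}{L^p}.
\end{align*}
Applying Gronwall's lemma with $\FLp=\Sob{f}{L^p}/\kap$ immediately yields \req{fp}. The extension to arbitrary $p\in[2,\infty)$ follows by approximating $p$ from above by even integers $p_n$ and invoking continuity of $s\mapsto\Sob{g}{L^s}$.

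For \req{fgam}, the test function is $\theta$ itself. The transport term vanishes as above, the dissipation produces $\kap\Sob{\Lam^{\gam/2}\theta}{L^2}^2$, and the forcing is bounded via duality and Young's inequality:
\begin{align*}
\lb f,\theta\rb\leq\Sob{f}{H^{-\gam/2}}\Sob{\Lam^{\gam/2}\theta}{L^2}\leq\frac{\kap}{2}\Sob{\Lam^{\gam/2}\theta}{L^2}^2+\frac{1}{2\kap}\Sob{f}{H^{-\gam/2}}^2.
\end{align*}
Since $\theta\in\cZ$ and the smallest nonzero Fourier frequency on $\T^2=[-\pi,\pi]^2$ has magnitude one, the Poincar\'e inequality yields $\Sob{\Lam^{\gam/2}\theta}{L^2}^2\geq\Sob{\theta}{L^2}^2$. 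Combining the above gives
\begin{align*}
\ddt\Sob{\theta}{L^2}^2+\kap\Sob{\theta}{L^2}^2\leq\frac{1}{\kap}\Sob{f}{H^{-\gam/2}}^2=\kap\FHg^2,
\end{align*}
and \req{fgam} follows from Gronwall.

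The main technical point is the uniform-in-$p$ character of the constant $C$ in \req{fp}: without it, the decay rate $C\kap$ would degenerate as $p\to\infty$, destroying the uniform-in-$p$ absorbing ball property that will be crucial for the $L^\infty$ bounds obtained later by De Giorgi iteration. This uniformity is however already guaranteed by Proposition~\ref{lb}, so the delicacy lies entirely upstream. The passage from even integers to general $p\geq 2$ is a minor subtlety handled by a standard continuity/density argument.
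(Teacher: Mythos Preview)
The paper does not supply its own proof of this proposition; it is stated as a recall with citations to \cite{ctv, ju:qgattract, resnick}. So there is no in-paper argument to compare against, and your task is simply to give a correct proof.

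Your argument for \req{fgam} is correct and standard. Your argument for \req{fp} when $p$ is an even integer is also correct: the second clause of Proposition~\ref{lb} is precisely what delivers the $p$-independent coercivity constant, and the rest is routine.

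The gap is in your extension to general $p\geq 2$. Even integers form a discrete set with gap $2$, so the sentence ``approximating $p$ from above by even integers $p_n$ and invoking continuity of $s\mapsto\Sob{g}{L^s}$'' does not make sense: for $p=3$, say, there is no sequence of even integers converging to $p$. Nor can you simply bound $\Sob{\tht}{L^p}$ by $\Sob{\tht}{L^{2k}}$ for the next even integer $2k\geq p$ and then invoke the even-integer estimate, because the right-hand side of \req{fp} would then involve $\Sob{\tht_0}{L^{2k}}$ and $\Sob{f}{L^{2k}}$, which are not controlled by the $L^p$ data assumed in the hypothesis. For general $p$ the first clause of Proposition~\ref{lb} gives only $\frac{2}{p}\Sob{\Lam^{\gam/2}|\tht|^{p/2}}{L^2}^2$, and since $|\tht|^{p/2}$ is not mean-zero, Poincar\'e does not immediately yield the uniform lower bound $C\Sob{\tht}{L^p}^p$. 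The cited references handle this via sharper pointwise lower bounds for the fractional Laplacian (of C\'ordoba--C\'ordoba / Constantin--Vicol type) that do produce a $p$-uniform coercivity term directly; you should either invoke those, or be content with a $p$-dependent constant, which in fact suffices for every application in this paper since $(H3)$ fixes a single value of~$p$.
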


{
It was shown in \cite{ju:qgattract} for the subcritical
range $1<\gam\leq2$, that equation \req{sqg} has an absorbing ball in
$V_\s$
and corresponding global attractor
$\A\sub V_\s$ when $\s>2-\gam$.
In other words, there is
a bounded set $\mathcal{B}\sub V_\s$
characterized by the property that for any bounded set $E\sub V_\s$,
there exists $t_0=t_0(E)>0$ such that
$S(t)E\sub\mathcal{B}$ for all $t\geq t_0$.
Here $\{S(t)\}_{t\geq0}$
denotes the semigroup of the corresponding dissipative equation.
}

\begin{prop}[Global attractor]\label{prop:ga}
Suppose that $1<\gam\leq2$ and  $\s>2-\gam$.  Let $f\in V_{\s-\gam/2}\cap {L}_{per}^p(\T^2)$, where $1-\s<2/p<\gam-1$.  Then \req{sqg} has an absorbing ball $\mathcal{B}_{H^\s}$ given by
	\begin{align}\label{sqg:hs:ball}
		\mathcal{B}_{H^\s}:=\{\tht\in V_\s: \Sob{\tht}{H^\s}\leq \THs\},
	\end{align}
for some absolute constant $\THs=\THs(f,\kap,\gam,\s,\be)<\infty$.  Also, the solution operator $S(t)\tht_0=\tht(t)$, $t>0$ of \req{sqg} defines a semigroup in the space $V_\s$ such that $S(t)$ is continuous in $H^\s$ for each $t>0$, and $S:[0,t]\goesto H^\s$ is continuous for each $\tht_0\in H^\s$ fixed.  Moreover, \req{sqg} possesses a global attractor $\A\sub V_\s$, i.e., $\A$ is a compact, connected subset of $V_\s$ satisfying the following properties:
	\begin{enumerate}
		\item $\A$ is the maximal bounded invariant set, i.e., $S(t)\A=\A$ for all $t\geq0$, and hence $S(t)\A=\A$ for all $t\in\R$.;
		\item  $\A$ attracts all bounded subsets in $V_\s$ in the topology of $\dot{H}_{{per}}^\s$.
	\end{enumerate}
\end{prop}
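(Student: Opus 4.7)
The plan is first to upgrade the $L^2$ and $L^p$ absorbing balls of Proposition \ref{prop:sqg:ball} to an absorbing ball in $V_\s$ by an energy estimate tailored to the constraint $1-\s<2/p<\gam-1$; then to recover continuity and asymptotic compactness of $S(t)$ from analogous estimates and the parabolic smoothing of the subcritical dissipation; and finally to invoke the classical theorem on the existence of a global attractor in the presence of a compact absorbing set and continuous semigroup. Global well-posedness of \req{sqg} in $V_\s$ for $\s>2-\gam$ is classical in the subcritical regime $1<\gam\leq2$ (cf. \cite{ju:qgattract, resnick}), so I take it as given.

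For the $V_\s$ absorbing ball, I would take the $\dot H^\s$ inner product of \req{sqg} with $\tht$ and, using $\del\cdot u=0$ and the identity $u=\Ri^\perp\tht$ (which gives $\|u\|_{L^r}\simeq\|\tht\|_{L^r}$ for $1<r<\infty$), arrive at
\[
\tfrac{1}{2}\tfrac{d}{dt}\|\Lam^\s\tht\|_{L^2}^2 + \kap\|\Lam^{\s+\gam/2}\tht\|_{L^2}^2 \leq \left|\int \Lam^{\s-\gam/2}(u\cdot\del\tht)\,\Lam^{\s+\gam/2}\tht\,dx\right| + \|\Lam^{\s-\gam/2}f\|_{L^2}\|\Lam^{\s+\gam/2}\tht\|_{L^2}.
\]
The forcing term is absorbed into the dissipation by Young's inequality. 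For the nonlinear term I apply the fractional Leibniz rule (Proposition \ref{prod:rule}) to $\Lam^{\s-\gam/2}(u\cdot\del\tht)$, splitting it into an $L^p$ factor (controlled uniformly by the $L^p$-absorbing ball of Proposition \ref{prop:sqg:ball}) and a higher-regularity factor that is interpolated (Proposition \ref{interpol}) between $\|\tht\|_{L^2}$ and $\|\Lam^{\s+\gam/2}\tht\|_{L^2}$. A second Young step absorbs the top-order piece into the dissipation, yielding a differential inequality of the form $\tfrac{d}{dt}y+cy\leq M$ with $y=\|\Lam^\s\tht\|_{L^2}^2$, and the uniform Gronwall inequality then produces $\mathcal{B}_{H^\s}$.

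Continuity of $S(t)$ in $H^\s$ would follow from entirely analogous estimates applied to the difference of two solutions. For asymptotic compactness I would bootstrap: replacing $\s$ by $\s+\delta$ in the same argument for some small $\delta>0$ still compatible with the admissibility constraints, the subcritical dissipation yields an absorbing ball in $V_{\s+\delta}$, which embeds compactly in $V_\s$. Existence of a compact absorbing set together with continuity of the semigroup then permit invoking the standard global-attractor theorem (cf. \cite{temam:dynsys}) to produce the compact, connected, fully invariant $\A\sub V_\s$ satisfying (1)--(2).

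The main hurdle is closing the $\dot H^\s$ energy estimate. The constraint $1-\s<2/p<\gam-1$ is precisely what keeps the Leibniz/interpolation step admissible: the upper bound $2/p<\gam-1$ leaves enough dissipation to absorb the top-order piece, while the lower bound $1-\s<2/p$ ensures that the low-regularity factor produced by Proposition \ref{prod:rule} is controlled by the available $L^p$-absorbing ball rather than by an unavailable higher Sobolev norm. Verifying this balance of exponents is the main calculation; everything else fits within the framework of \cite{ju:qgattract}.
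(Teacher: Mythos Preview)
The paper does not supply its own proof of this proposition: it is stated as a recalled result, attributed to \cite{ju:qgattract} (see the paragraph immediately preceding the proposition in Section~\ref{sect:ball}). So there is no in-paper argument to compare against.

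That said, your sketch is a faithful outline of the standard approach in \cite{ju:qgattract}: $L^p$ absorbing ball first, then an $\dot H^\s$ energy estimate closed via the fractional Leibniz rule and interpolation, with the window $1-\s<2/p<\gam-1$ playing exactly the role you describe. The bootstrap to $V_{\s+\de}$ for compactness and the appeal to the abstract attractor theorem in \cite{temam:dynsys} are likewise the route taken there. Nothing in your outline conflicts with what the paper needs or with the cited source; it simply fills in what the paper chose to quote rather than reprove.
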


Before we move on to the a priori analysis, we will set forth the following convention for constants.
\begin{rmk}
In the estimates that follow below, $c, C$, will denote generic positive absolute constants, which depend only on other non-dimensional scalar quantities, and may change line-to-line in the estimates.   We also use the notation $A\lesssim B$ and $A\sim B$ to denote the relations $A\leq cB$ and $c'B\leq A\leq c''B$, respectively, for some absolute constants $c, c', c''>0$.
\end{rmk}

\section{The determining form}\label{outline}
Let $\s>2-\gam$, $P_N$ denote orthogonal projection onto the wavenumbers $|\ell|\leq N$ and $S_m$ denote the Littlewood-Paley projection defined as in \req{lp:block:def}.  For $m\geq2$, we define the Banach spaces
{
	\begin{align}\label{X}
		\begin{split}
		 X&:={C}_{b}^1(\mathbb{R}; S_{m-1}V_\s)= \{ v:\mathbb{R} \rightarrow  S_{m-1}V_\s: \text{differentiable, } \Sob{v}{X}<\infty \}, \\
		Y&:={C}_{b}(\mathbb{R}; V_\s)= \{w: \mathbb{R}\rightarrow V_\s: \text{continuous, } \Sob{w}{Y}<\infty \},
		\end{split}
	\end{align}
}
equipped with the following norms,
	\begin{align}\label{XY:norms}
		\Sob{v}{X}= \sup_{s\in \mathbb{R}} \Sob{v(s)}{H^{\s}}+\frac{1}{2^{(2+\s)m}}\sup_{s\in\mathbb{R}}\Sob{v'(s)}{H^\s}, \qquad \Sob{w}{Y}= \sup_{s\in \mathbb{R}} \Sob{w(s)}{H^{\s}},
	\end{align}	
where $'$ denotes $d/ds$.

Our first concern is the following problem: Let $1<p<\infty$ such that $1/p+1/2<\gam/2$.  Given $f\in V_{\s-\gam/2}\cap L^p(\T^2)$ and $\rho>0$, find $\mu>0$ large enough, depending on $\rho, f, \kap$, and find $m\in \mathbb{N}$ large enough, such that for all $v\in B_{X}^{\rho}(0):=\{v\in X: \|v\|_{X}<\rho\}$,  the equation
	\begin{align}\label{sqg:da}
		\begin{split}
		&\bdy_sw+\kap\Lam^\gam w+\til{u}\cdotp\del w=f-\mu S_m(w-v),\quad\til{u}=\Ri^\perp w,
		\end{split}
	\end{align}
has a unique solution $w\in Y$.  We will choose $\rho>0$ according to the radius of the global attractor, $\A$ of \req{sqg}.  
{{}
Indeed, we have the following.

\begin{prop}\label{prop:X:att}
Let $F_{H^{-\gam/2}}$ be given by \req{fgam} and define
	\begin{align}\label{R:rad}
		R:=C\left(1+\kap\right)\left(1+F_{H^{-\gam/2}}\right)^2.
	\end{align}
Then $S_{m-1}\mathcal{A}\sub B_X^{4R}(0)$.
\end{prop}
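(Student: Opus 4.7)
The plan is to realise each element of $S_{m-1}\A$ as the value $v(0)$ of the projected complete trajectory $v(s):=S_{m-1}\tht(s)$, where $\tht:\R\to\A$ exists because of the invariance $S(t)\A=\A$ asserted in Proposition~\ref{prop:ga}. It then suffices to bound each of the two summands in $\|v\|_X$ uniformly in $s\in\R$.

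First I would bound $\sup_s\|v(s)\|_{H^\s}$ via $\|S_{m-1}\tht(s)\|_{H^\s}\leq\|\tht(s)\|_{H^\s}\leq\Tht_{H^\s}$, using that $S_{m-1}$ is bounded on $V_\s$ and the absorbing-ball part of Proposition~\ref{prop:ga}. Tracking the constants in the standard subcritical $H^\s$ a priori estimate yields a quadratic control $\Tht_{H^\s}\ls(1+\kap)(1+\FHg)^2$, so this summand is bounded by $2R$ once the absolute constant $C$ in the definition of $R$ is chosen appropriately.

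Next I would differentiate $v$ using the SQG equation~\req{sqg} and the divergence-free condition $\del\cdotp u=0$ to write the transport in divergence form:
\begin{align*}
v'(s)=-\kap\Lam^\gam S_{m-1}\tht(s)-S_{m-1}\del\cdotp\bigl(u(s)\tht(s)\bigr)+S_{m-1}f.
\end{align*}
The Bernstein inequalities of Proposition~\ref{bern} then yield $\|\Lam^\gam S_{m-1}\tht\|_{H^\s}\ls 2^{\gam m}\Tht_{H^\s}$, together with $\|S_{m-1}\del\cdotp(u\tht)\|_{H^\s}\ls 2^{(\s+2)m}\|u\tht\|_{L^1}\ls 2^{(\s+2)m}\|\tht\|_{L^2}^2\leq 2^{(\s+2)m}\FHg^2$ (the middle step by Cauchy--Schwarz and the $L^2$-boundedness of the Riesz transform $\Ri^\perp$), and $\|S_{m-1}f\|_{H^\s}\leq\|f\|_{H^\s}$. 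After multiplying by the weight $2^{-(2+\s)m}$ built into the $X$-norm, the dissipative piece picks up the factor $2^{(\gam-\s-2)m}\leq 1$ since $\s>2-\gam$, the nonlinear piece contributes exactly $\FHg^2$, and the forcing piece decays geometrically in $m$. Altogether this second summand is also bounded by $2R$, and adding the two estimates yields $\|v\|_X\leq 4R$.

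The main obstacle is confirming that the attractor radius $\Tht_{H^\s}$ from Proposition~\ref{prop:ga} fits inside the quadratic scaling $R=C(1+\kap)(1+\FHg)^2$; this requires reading off the explicit constants in the underlying $H^\s$ a priori estimate rather than invoking Proposition~\ref{prop:ga} as a black box. The pleasant structural point is that the weight $2^{-(2+\s)m}$ in the $X$-norm is calibrated precisely to cancel the two sources of Bernstein growth in the nonlinear term, namely the $+1$ from the divergence and the $+1$ from the two-dimensional $L^1\hookrightarrow L^2$ inequality, on top of the $\s$ needed to reach the $H^\s$ index; consequently no lower bound on $m$ is required for this bound to hold.
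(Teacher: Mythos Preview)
Your approach is essentially the paper's: for a full trajectory $\tht(\cdot)\sub\A$, bound the $H^\s$ part by the attractor radius and bound the weighted time-derivative part by writing $\bdy_t\tht$ via the equation in divergence form, then applying Bernstein block-by-block. Your observation that the weight $2^{-(2+\s)m}$ is tuned exactly to the nonlinear term is the heart of the argument, and you correctly flag the scaling of $\Tht_{H^\s}$ as the point that the paper itself leaves implicit.

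One concrete slip: you write $\|S_{m-1}f\|_{H^\s}\leq\|f\|_{H^\s}$, but the standing hypothesis is only $f\in V_{\s-\gam/2}$, so $\|f\|_{H^\s}$ need not be finite. The paper repairs this with Bernstein at the block level, $2^{j\s}\|\lpj f\|_{L^2}\ls 2^{j(\s+\gam/2)}\|\lpj f\|_{H^{-\gam/2}}$, which after summing over $j\leq m-1$ and applying the weight contributes at most a constant multiple of $\kap F_{H^{-\gam/2}}$ (since $\gam/2-2-\s<0$), and this fits cleanly into $R$. A second, smaller divergence: the paper bounds the dissipative piece the same way, via $2^{j\s}\|\lpj\Lam^\gam\tht\|_{L^2}\ls 2^{j(\gam+\s)}\|\lpj\tht\|_{L^2}$ and then $\|\tht\|_{L^2}\leq F_{H^{-\gam/2}}$ from Proposition~\ref{prop:sqg:ball}, rather than through $\Tht_{H^\s}$ as you do. Your route is fine, but the paper's choice has the advantage that the entire time-derivative estimate depends only on $F_{H^{-\gam/2}}$, so the unverified scaling of $\Tht_{H^\s}$ is invoked exactly once (for the first summand) rather than twice.
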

\begin{proof}
Let $\tht(\cdotp)\sub\mathcal{A}$.  Then Proposition \ref{prop:sqg:ball} implies that $\sup_{t\in\R}\Sob{\tht(t)}{L^2}\leq F_{H^{-\gam/2}}$.  Observe that by Proposition \ref{prop:ga} and the definition \req{XY:norms}, it suffices to estimate $S_{m-1}\bdy_t\tht$.  We have
	\[
		\bdy_t\tht=-\kap\Lam^\gam\tht {{}{-}}\del\cdotp(u\tht)+f.
	\]
Since $S_{m-1}\mathcal{A}\sub V_\s$, by Remark \ref{sob:to:bes}, it suffices to estimate $2^{j\s}\Sob{\lpj\bdy_t\tht}{L^2}$ for $0\leq j\leq m-1$.  Applying Proposition \ref{bern} we obtain
	\begin{align}
		2^{j\s}\Sob{\lpj\Lam^{\gam}\tht}{L^2}&\leq C2^{j(\gam+\s)}\Sob{\lpj\tht}{L^2},\notag\\
		2^{j\s}\Sob{\lpj\del\cdotp(u\tht)}{L^2}&\leq C2^{j(\s+1+2(1-1/2))}\Sob{\lpj(u\tht)}{L^1}\leq 2^{j(\s+2)}\Sob{\tht}{L^2}^2,\notag\\
		2^{j\s}\Sob{\lpj f}{L^2}&\leq C2^{j(\s {{} +} \gam/2)}\Sob{\lpj f}{H^{-\gam/2}},
	\end{align}
It follows that
	\begin{align}
		\Sob{S_{m-1}\bdy_t\tht}{H^\s}^2&\leq C\left(2^{2m(\gam+\s)}\Sob{\tht}{L^2}^2+2^{2m(\s+2)}\Sob{\tht}{L^2}^4+C(m)\Sob{f}{H^{-\gam/2}}^2\right),\notag\\
			&\leq C\kap^2\left(2^{2m(\gam+\s)}\frac{F_{H^{-\gam/2}}^2}{\kap^2}+2^{2m(\s+2)}\frac{F_{H^{-\gam/2}}^4}{\kap^2}+C(m)F_{H^{-\gam/2}}^2\right),
	\end{align}
where $C(m)\leq 2^{2m(\s {{} +} \gam/2)}$ if $\s>\gam/2$, and $C(m)\leq (1- {{} 2} ^{\gam{{} +}2\s})^{-1}$, otherwise.  Therefore
	\begin{align}
		2^{-(2+\s)m}\Sob{\bdy_tS_{m-1}\tht}{H^\s}\leq R,
	\end{align}
so that $\Sob{\tht}{X}\leq 4R$.
\end{proof}
}

The unique solution, $w\in Y$, of \req{sqg:da} that we will establish, will then define a well-defined map $W:B_X^\rho(0)\goesto Y$, $v\mapsto w$, $w=W(v)$, provided that $\mu$ and $m$ are large enough.  Moreover, given any steady state solution  $\tht^*\in\mathcal{A}$ of \req{sqg} (cf. \cite{dai:steady, fjlt}), $W$ induces the following evolution equation
	\begin{align}\label{df}
		\begin{split}
		\frac{dv(s)}{d\tau}(\tau)= -\Sob{v(\hspace{1.75pt}\cdotp)(\tau)-S_mW(v(\hspace{1.75pt}\cdotp)(\tau))}{X}^2 (v(s)(\tau)-S_m\tht^*),\quad v(0)=v_0\in\mathcal{B}_X^{\rho}(0).
		\end{split}
	\end{align}
If $S_mW:B_X^\rho(0)\goesto Y$ is a Lipschitz map, then \req{df} is an \textit{ordinary differential equation} in the Banach space $X$. 
This is formalized in the following theorem.

\begin{thm}\label{thm:gen:detform}
Let $\rho=4R$, where $R$ is given as in Proposition \ref{prop:X:att}, and let $\tht^*\in\mathcal{\A}$ be a steady state solution of \req{sqg}. Suppose that the Standing Hypotheses $(H1)$-$(H7)$, below, hold. Then the following are true.
	\begin{enumerate}[(i)]
		\item{The vector field in \req{df} is a Lipschitz map from the ball $\mathcal{B}_X^{\rho}(0)$ into $X$. Thus, \req{df}, is an ODE in $\mathcal{B}_{X}^{\rho}(0)$, and has short time existence and uniqueness. }
		\item{The ball $\mathcal{B}_X^{3R}(S_m\tht^*)= \{v\in X: \|v-S_m\tht^*\|_X< 3R\} \subset \mathcal{B}_X^{\rho}(0)$ is forward invariant in time, under the dynamics of the determining form \req{df}. Consequently, \req{df} has global existence and uniqueness for all initial data in $\mathcal{B}_X^{3R}(S_m\tht^*)$.}
		\item{Every solution of  the determining form \req{df}, with initial data $v_{0}\in\mathcal{B}_X^{3R}(S_m\tht^*)$, converges to a steady state solution of the determining form \req{df}. }
		\item{All of the steady state solutions of the determining form, \req{df}, that are contained in the ball $\mathcal{B}_X^{\rho}(0)$ are given by $v(s)=S_m\tht(s)$, for all $s\in \mathbb{R}$, where $\tht(s)$ is a trajectory that lies on the global attractor, $\mathcal{A}$, of \req{sqg}.}
	\end{enumerate}
\end{thm}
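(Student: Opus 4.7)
The plan is to exploit the special structure of the vector field
\[
F(v) = -G(v)(v - S_m\tht^*), \qquad G(v) := \|v - S_mW(v)\|_X^2,
\]
combined with the Lipschitz continuity of $W$ on $\mathcal{B}_X^\rho(0)$ established in Proposition \ref{W-Lip-2}, which is the technical heart of the paper. Given that Lipschitz property, the theorem will follow from essentially structural arguments, and I will treat the four claims in order.

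For part (i), the Lipschitz continuity of $W$, boundedness of $S_m$, and the local Lipschitzness of $x\mapsto x^2$ on bounded sets together yield that $G$ is locally Lipschitz on $\mathcal{B}_X^\rho(0)$; multiplying by the affine factor $v - S_m\tht^*$ (bounded on the ball, by Proposition \ref{prop:X:att}) preserves local Lipschitzness. Picard-Lindel\"of then delivers short-time existence and uniqueness in $X$.

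For parts (ii) and (iii), the decisive observation is that $F(v)$ is a scalar multiple of $v - S_m\tht^*$, so the determining form reduces to a scalar-type equation along this direction. Integrating yields the explicit formula
\[
v(\tau) - S_m\tht^* = (v_0 - S_m\tht^*)\exp\Bigl(-\int_0^\tau G(v(s))\,ds\Bigr),
\]
from which $\|v(\tau) - S_m\tht^*\|_X$ is non-increasing in $\tau$, giving forward invariance of $\mathcal{B}_X^{3R}(S_m\tht^*)$ and hence global existence via the continuation principle. For convergence in (iii), set $I_\infty := \lim_{\tau\to\infty}\int_0^\tau G(v(s))\,ds \in [0,\infty]$; the explicit formula then yields $v(\tau)\to v_\infty$ in $X$. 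If $I_\infty = \infty$, then $v_\infty = S_m\tht^*$, which is a steady state since $\tht^*$ is a steady state of \req{sqg} (so $W(S_m\tht^*) = \tht^*$ by uniqueness and hence $G(S_m\tht^*) = 0$). If $I_\infty < \infty$, then integrability together with the continuity $G(v(\tau)) \to G(v_\infty)$ inherited from the Lipschitz property of $W$ forces $G(v_\infty) = 0$, so again $v_\infty$ is a steady state.

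For (iv), any steady state $v\in\mathcal{B}_X^\rho(0)$ satisfies $G(v)(v - S_m\tht^*) = 0$, so either $v \equiv S_m\tht^*$ (corresponding to the constant attractor trajectory $\tht \equiv \tht^*$) or $v = S_mW(v)$. In the second case, set $w = W(v)$; since $S_m$ acts as the identity on $S_{m-1}V_\s$ we have $S_m v = v$, and the feedback term in \req{sqg:da} becomes $\mu S_m(w - v) = \mu(S_m w - v) = 0$. Thus $w$ is a globally $V_\s$-bounded solution of \req{sqg} on all of $\R$; by Proposition \ref{prop:ga} this places $w(s)\in\mathcal{A}$ for every $s$, and $v(s) = S_m w(s)$ is the desired projection of an attractor trajectory. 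Conversely, any full trajectory $\tht(\cdotp)\subset\mathcal{A}$ solves \req{sqg:da} with $v = S_m\tht$ (since the feedback vanishes), so $W(S_m\tht) = \tht$ by the uniqueness part of Proposition \ref{da:exist}, making $v = S_m\tht$ a steady state. The principal obstacle is conceptual rather than computational: the whole theorem rides on the Lipschitz property of $W$, whose proof requires the delicate De Giorgi and harmonic analysis machinery developed earlier. Within the theorem itself, the subtlest residual point is the dichotomy in (iii), which requires combining monotonicity of $I(\tau)$, integrability, and continuity of $G$ to exclude the possibility of a nonzero limit of $G(v(\tau))$.
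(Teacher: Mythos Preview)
Your argument is essentially the one used in the references \cite{fkjt2, jolly:sadigov:titi1, jolly:sadigov:titi2} to which the paper defers; the paper itself does not reprove Theorem~\ref{thm:gen:detform} in detail but only records the two ingredients it must supply (well-definedness of $W$ and the Lipschitz property of $S_mW$) together with the one-line justification of (iv) via $S_mw=v=S_mv$. Your explicit integration of the scalar-type equation, the monotonicity of $\|v(\tau)-S_m\tht^*\|_X$, and the dichotomy on $I_\infty$ are exactly the structural arguments those references use, so in spirit you match the paper.

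There is, however, one genuine gap in your treatment of~(i). You invoke Proposition~\ref{W-Lip-2} to conclude that $G(v)=\|v-S_mW(v)\|_X^2$ is locally Lipschitz, but that proposition only gives Lipschitz continuity of $S_mW$ as a map into $Y=C_b(\R;V_\s)$, whereas the $X$-norm in $G$ also carries the weighted $\sup_s\|\partial_s(\cdot)\|_{H^\s}$ term. To close this you must show that $v\mapsto \partial_s S_mW(v)$ is Lipschitz from $B_X^\rho(0)$ into $C_b(\R;V_\s)$. This follows by applying $S_m$ to \req{sqg:da}, writing
\[
\partial_s S_m w = -\kap S_m\Lam^\gam w - S_m(\til u\cdot\nabla w) + S_mf - \mu S_m(w-v),
\]
and using Bernstein's inequalities (Proposition~\ref{bern}) together with the uniform $V_\s$-bounds on $W(v)$ to estimate each term on the right in $H^\s$ in terms of $\|w\|_Y$ and $\|v\|_X$; the Lipschitz dependence on $v$ then descends from the $Y$-Lipschitz property you already have. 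Without this step, the vector field is only shown to be Lipschitz in the $Y$-topology, which is not enough to run Picard--Lindel\"of in $X$. The same omission affects your continuity argument in~(iii), where you need $G(v(\tau))\to G(v_\infty)$: this requires continuity of $G$ in $X$, hence again the derivative control above.
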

This theorem has been established for other equations in previous determining form papers \cite{fkjt2, jolly:sadigov:titi1, jolly:sadigov:titi2}.  
{ 
In particular, observe that $B_X^{3R}(S_m\tht^*)\subset B_X^{4R}(0)$ since $\bdy_tS_m\tht^*=0$.  On the other hand, property $(iv)$, above, is implied by the fact that $v(s)=S_mW(v(s))$, for $s\in\R$, only when 
	\[
		S_mw=v=S_mv,
	\]
since $v(s)\in S_{m-1}V_\s$, for each $s\in\R$, so that by \req{sqg:da}, $w$ must be a trajectory that satisfies \req{sqg}.  Thus, to establish Theorem \ref{thm:gen:detform} in our case, it suffices to show:

\begin{enumerate}[1.)]
	\item $W: B_X^{\rho}(0)\goesto Y$ exists and is well-defined;
	\item $S_mW: B_X^{\rho}(0)\goesto Y$ is Lipschitz, for some $m$.
\end{enumerate}
}

We demonstrate these {{}{claims}} in the next section.

\begin{rmk}
It is shown in \cite{fjlt} that \req{df} can be modified in order to improve an algebraic convergence rate of $v(\tau)$, as $\tau\goesto+\infty$, to the projection of some trajectory in the global attractor.  Replacing the power $2$ by the power $1$ on the $X$-norm in \req{df} yields a faster algebraic rate, while with further modification, one obtains exponential convergence as $\tau\goesto+\infty$.  
\end{rmk}

\section{Proof of existence of the determining form}\label{sect:detform}

We operate under the following conditions throughout both sections \ref{sect:detform} and \ref{sect:apriori}.
\begin{sett}
Assume the following
\begin{enumerate}[(H1)]
	\item $1<\gam<2$;
	\item $\s>2-\gam$;
	\item $p\in[1,\infty]$ such that $1-\s<2/p<\gam-1$, fixed;
	\item $f\in V_{\s-\gam/2}\cap L^{{p}}$, is time-independent;
	\item $v\in B_X^{4R}(0)$, {{}{where $R$ is given in Proposition \ref{prop:X:att} above;}}
	\item $\mu$ is large enough, i.e., satisfies;
		\begin{align} \label{MainCondition}
			\mu\geq c_0\kap\left(\frac{G_{L^p}}{\kap}\right)^{\gam/(\gam-1-2/p)},
		\end{align}
for some sufficiently large absolute constant $c_0$, and where $G_{L^p}$ is given by \req{Gp} below;
	\item $m$ is large enough, i.e., satisfies
		\begin{align}\label{modes:crucial}
		2^{\gam m}\geq c_0'\frac{\mu}{\kap}\quad\text{and}\quad	2^{m}\geq c_0''\left(\frac{G_{\s,\infty}}{G_{L^p}}\right)^{1/(1-2/p-\s)},
		\end{align}
for some sufficiently large absolute constants $c_0', c_0''>0$, and where $G_{\s,\infty}$ is given by \req{gsigtil} below.
\end{enumerate}
\end{sett}

We note that hypotheses $(H1)$ is precisely the subcritical range of dissipation for \eqref{sqg:da}. Whenever we talk about $H^{\sigma}$ or $L^{p}$ bounds, the parameters $\sigma$ and $p$ satisfy hypotheses $(H2)$ and $(H3)$, respectively. Moreover, the fixed trajectory $v\in X$ satisfies the hypothesis $(H5)$ by definition of $X$, which is also built into system \eqref{sqg:da}. Recall that to guarantee the existence of a determining form for \req{sqg}, it suffices to show that:  1) $W:B_X^{4R}(0)\goesto Y$ exists, 2) $W$ is well-defined, and 3)  $S_mW:B_X^{4R}(0)\goesto Y$ is Lipschitz.  These three objectives thus constitute the main results of this section.


\subsection{Existence of the $W$ map}

We first prove that the $W: B_X^{4R}(0)\goesto Y$ map exists and is well-defined.
To prove this, we must establish existence of solutions to \req{sqg:da}.

\begin{prop}[Existence]\label{da:exist}
Under the Standing Hypotheses, $(H1)-(H7)$, for each $v\in B_X^{4R}(0)$, there exists $w\in Y$ satisfying \req{sqg:da} with $\bdy_sw\in L^\infty(\R;V_{\s-\gam})\cap L^2_{loc}(\R;V_{\s-\gam/2})$.
\end{prop}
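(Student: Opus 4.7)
The plan is to construct $w$ as a limit, as $T\to\infty$, of solutions $w_T\in C([-T,\infty);V_\s)$ to the initial value problem for \req{sqg:da} posed on $[-T,\infty)$ with $w_T(-T)=0$. For each fixed $T$, existence of $w_T$ follows by a standard Galerkin approximation: one solves the truncated system in finite-dimensional Fourier space, uses the coercivity of $\kap\Lam^\gam$ from Proposition \ref{lb} (together with the fact that the feedback $-\mu S_m(w-v)$ is a bounded linear perturbation at fixed $m$) to derive $V_\s$-energy estimates that rule out finite-time blow-up on $[-T,\infty)$, and then passes to the limit in the Galerkin parameter via Aubin-Lions compactness, exactly as in the classical well-posedness theory for subcritical SQG (cf.\ \cite{ju:qgattract, resnick}).

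The heart of the argument is then to upgrade these to a priori bounds on $w_T$ that are uniform in $T$, so that a bounded-on-all-of-$\R$ solution can be extracted. First, Proposition \ref{prop:lp} provides a $T$-uniform $L^p$ bound under hypothesis $(H6)$ on $\mu$. Since this bound a priori depends on $m$, I would then apply the De Giorgi-type level-set iteration of section \ref{sect:linfty}, using Proposition \ref{lem:lvlsetpos} to handle the dissipative term at each level, to promote it to an $L^\infty$ bound that is uniform in both $T$ and $m$. With $\Sob{w_T}{L^\infty}$ controlled, the standard $V_\s$-energy estimate for \req{sqg:da} --- testing against $\Lam^{2\s}w_T$ and handling the transport nonlinearity via Proposition \ref{prod:rule} and the feedback via Proposition \ref{bern} --- closes up to give uniform bounds in $L^\infty(\R;V_\s)\cap L^2_{loc}(\R;V_{\s+\gam/2})$, with the corresponding bounds on $\bdy_s w_T$ in $L^\infty(\R;V_{\s-\gam})\cap L^2_{loc}(\R;V_{\s-\gam/2})$ read off the equation.

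The main obstacle is that the feedback $-\mu S_m(w-v)$ damps only the Fourier modes below $2^m$, so stability of the a priori estimates as $m$ grows must come entirely from dissipation on the high modes. This is precisely what hypothesis $(H7)$ encodes: the requirement $\kap 2^{\gam m}\gs\mu$ ensures that at the spectral cutoff the dissipative decay rate matches the damping rate, which is what will allow the level-set iteration to produce an $L^\infty$-bound that is independent of $m$. This $m$-independence is the delicate point of the proof, but once it is in place, the remaining higher-regularity estimates are comparatively standard.

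Finally, with uniform-in-$T$ bounds in hand on every compact time interval, a diagonal extraction along a sequence $T_k\to\infty$ combined with Aubin-Lions compactness produces a subsequential limit $w$ that belongs to $L^\infty(\R;V_\s)$ with strong convergence in $C_{loc}(\R;V_{\s'})$ for some $\s'<\s$ close to $\s$. The strong local convergence is enough to pass to the limit in the quadratic transport term $\til u\cdotp\del w$, so $w$ satisfies \req{sqg:da} on all of $\R$; the uniform $V_\s$-bound together with the $\bdy_s w$ regularity yield $w\in C_b(\R;V_\s)=Y$, and $\bdy_s w\in L^\infty(\R;V_{\s-\gam})\cap L^2_{loc}(\R;V_{\s-\gam/2})$ follows directly from the equation.
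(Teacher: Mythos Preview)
Your overall strategy---solve initial value problems on $[-T,\infty)$ with zero data at $-T$, establish uniform-in-$T$ a priori bounds, and extract a limit on $\R$ by a diagonal/Aubin--Lions argument---matches the paper's architecture. The paper carries out exactly this scheme, but with one additional layer you omit: it first adds a parabolic regularization $-\eps\Delta w^\eps$ to \req{sqg:da}, performs the Galerkin step and the subsequent a priori estimates on the regularized solutions $w^\eps$, and only at the end passes $\eps\to 0$. The advantage is that $w^\eps\in L^\infty(\R;V_2)$, so all the formal manipulations in the De~Giorgi level-set argument (multiplying by $\ph'(w)\ph(w)$, integrating by parts, using Proposition~\ref{lem:lvlsetpos}) are rigorously justified without further comment. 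In your approach the De~Giorgi iteration must be run on $w_T$ directly, which satisfies only \req{sqg:da} (no Laplacian); this is fine in principle since subcritical SQG solutions are smooth, but you would need to say a word about why the level-set computations are legitimate at that regularity. Note also that at the Galerkin level the projection $P_N$ in the nonlinearity destroys the cancellation $\int(\til u\cdot\nabla w)\ph'(w)\ph(w)=0$, so the $L^\infty$ iteration cannot be done before passing $N\to\infty$; you do seem to intend to run it on $w_T$, which is correct.

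One minor point of emphasis: for the existence statement in Proposition~\ref{da:exist} the bounds are allowed to depend on $m$ (and the paper's Step~3 uses exactly such $m$-dependent, $\eps$-independent bounds from Propositions~\ref{prop:l2:da}--\ref{prop:hs:a priori}). The $m$-independence you stress, coming from the De~Giorgi $L^\infty$ bound and hypothesis $(H7)$, is essential for the \emph{uniqueness} and \emph{Lipschitz} propositions that follow, but is not strictly needed to produce some $w\in Y$ here. Your proposal is correct; it just front-loads work that the paper defers.
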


{{}
\begin{proof}[Sketch of proof]

Let $\eps\in(0,1)$ and consider the following parabolic regularization of \eqref{sqg:da}
	\begin{align}\label{sqg:daPar}
		\begin{split}
		&\bdy_sw^\eps-\eps \Delta w^\eps+\kap\Lam^\gam w^\eps+\til{u}^\eps\cdotp\del w^\eps=f^\eps-\mu S_m(w^\eps-v),\quad \til{u}^\eps=\Ri^\perp w^\eps,
		\end{split}
	\end{align}
where $f^\eps=\phi_\eps*f$ and $\phi_\eps(x):=\eps^{-2}\phi(x/\eps)$ for some smooth, compactly supported mollifier function $\phi$.
We will first establish existence of a global strong solution, $w^\eps\in L^\infty(\R;V_2)$, to \req{sqg:daPar}.  We will then conclude the proof by passing to the limit $\eps\goesto0$ to obtain existence of a global {{} solution to \req{sqg:da} in the sense of distribution.}

\subsubsection*{Step 1: Global well-posedness for truncations of \req{sqg:daPar}} 
Let $v\in B_X^{4R}(0)$, and let $N\in\N$ such that $N\geq 2^{m+1}$.  Let $P_N$ denote orthogonal projection onto  wavenumbers $|\ell|\leq N$.  Let $H_N:=P_NV_0$ denote the finite-dimensional subspace of $V_0$ spanned by $\{e^{ij\cdotp x}\}_{|j|\leq N}$.  Let $f_{N}:=P_Nf$ and $v_N=P_Nv$.  Given $k>0$, consider the initial value problem for the following system:
	\begin{align}\label{sqg:daParGal}
		\begin{split}
		&\bdy_sw_{k,N}^{\eps}-\eps \Delta w^\eps_{{}{k,N}}+\kap\Lam^\gam w_{{{}{k,N}}}^{\eps}+P_{N}(\til{u}_{k,N}^\eps\cdotp\del w_{k,N}^{\eps})=f_{N}^\eps-\mu S_m(w_{k,N}^{\eps}-v_N),\\
		&\til{u}_{k,N}^{\eps}=\Ri^\perp w_{k,N}^{\eps},\quad w_{k,N}^{\eps}{\big|}_{s=-k}=0.
		\end{split}
	\end{align}
Note that $w_{k,N}^\eps, f_{N}^\eps\in H_{N}$ and also, $S_mv_N\in H_N$ since $S_mv_N=S_mP_Nv_N=P_NS_mv_N,$ {{} for $N\geq 2^{m+1}$}.  
Since \req{sqg:daParGal} is equivalent to a system of ODEs with a {{}{locally}} Lipschitz vector field, it has a unique solution $w_{k,N}^\eps$ on some interval $[-k, S^*)$, for some $S^*>-k$.  Without loss of generality, we may assume that $[-k,S^*)$ is the maximal interval of existence and uniqueness of \req{sqg:daParGal}.  Note that strictly speaking $S^*$ depends on $k$ and $N$, i.e., $S^*=S^*_{k,N}$.  Our goal is to show that $S^*=\infty$.  To establish this, it is enough to show in this case that $\sup_{-k\leq s<S^*}\Sob{w^{\eps}_{k,N}(s)}{V_0}<\infty$.

We assume, by contradiction, that $S^*<\infty$, and then let us focus on the maximal interval of existence $[-k,S^*)$.  We first establish a bound on $\Sob{w^{\eps}_{k,N}(s)}{V_0}$, which is independent of $\eps, k, N, m$, and $S^*$ (cf. Proposition 6 \cite{jmt:sqgda}).  This will imply, among other things, that $S^*=\infty$.  Next, we establish estimates for $\Sob{w^\eps_{k,N}(s)}{V_0}$, for $s\in[-k,\infty)$, which depend on $\eps, k, m$, and grow in $s$, 
but are nevertheless, independent of $N$.  Indeed, one can show that for each ${{}{s^*}}>-k$ (cf. \cite{temamnse, temam:dynsys}): 
\begin{enumerate}[$(1_{\eps,k}$)]\addtocounter{enumi}{-1}
	\item {{}{$w_{k,N}^{\eps}\in L^\infty([-k,{{{}{\infty}}});V_0)$;}}
	\item $w_{k,N}^{\eps}\in L^2([-k,{{}{s^*}});V_1)$ {{}{and $\frac{d}{ds}w_{k,N}^{\eps}\in L^2([-k,{{}{s^*}});V_{-1})$;}}
	\item $w_{k,N}^{\eps}\in L^\infty([-k,{{}{s^*}});V_1)$;
	\item $\frac{d}{ds}w^\veps_{k,N}\in L^\infty([-k,{{}{s^*}});V_0)$;
	\item $w_{k,N}^{\eps}\in L^\infty([-k,{{}{s^*}});V_2)$.
\end{enumerate}
In particular, given any ${{}{s^*}}>0$, the corresponding estimates, above, guarantee that \req{sqg:daParGal} has a unique solution over $[-k,{{}{s^*}})$, but with bounds ultimately depending on $\eps, k, {{}{s^*}}$ and $m$.  Thus, for each $\eps, k, {{}{s^*}}$, we may {{}{use $(1_{\eps,k})$, the Rellich compactness theorem, and then {{}{the}} Aubin-Lions lemma}} (cf. \cite{const:foias, temamnse}) to extract a subsequence of $N$, denoted again by $w_{k,N}^\epsilon$, pass to the limit as $N\goesto\infty$ as in \cite{const:foias} to obtain a weak solution $w_k^\eps\in L^\infty([-k,{{}{s^*}});V_0)\cap L^2([-k,{{}{s^*}});V_1)$ to
	\begin{align}\label{sqg:daPar:k}
		\bdy_sw^\eps_k-\eps \Delta w^\eps_k+\kap\Lam^\gam w^\eps_k+\til{u}_k^{\eps}\cdotp\del w^\eps_k=f^\eps-\mu S_m(w^\eps_k-v),\quad \til{u}_k^\eps=\Ri^\perp w^\eps_k,\quad w^\eps_k{\big|}_{s=-k}=0.
	\end{align}
Since $w_{k,N}^{\eps}\in L^\infty([-k,{{}{s^*}});V_2)$ with a corresponding bound that is uniform in $N$,  we in fact have that $w^\eps_k\in L^\infty([-k,{{}{s^*}});V_2)$, so that $w^\eps_k$ is the \textit{unique} strong solution to \req{sqg:daPar:k} over $[-k,{{}{s^*}})$.  Since this is true for each ${{}{s^*}}>0$, we have that $w^\eps_k\in L_{loc}^\infty([-k,\infty),V_2)$, for each $\eps, k$.

Next, we establish estimates that are \text{independent of} $k$ and ${{}{s^*}}$, but \textit{depend on} $\eps, m$.  Indeed, let $p>1$ satisfy $(H1)$, then we bootstrap as follows:
	\begin{enumerate}[$(1_{\eps})$]
		\item $w^\eps_k\in L^\infty([-k,\infty);L^p)$;
		\item $w^\eps_k\in L^\infty([-k,\infty);V_1)$;
		\item $w^\eps_k\in L^\infty([-k,\infty);V_2)$,
	\end{enumerate}
with corresponding bounds that are independent of $k, {{}{s^*}}$.  The estimates that imply $(1_{\eps}), (2_\eps)$ can be found in {{} Propositions  7}, 9, respectively, of \cite{jmt:sqgda}, while the estimates that imply $(3_\eps)$ can be performed in an entirely similar spirit.  We omit the details to avoid repetition of argument.  

\subsubsection*{Step 2: Existence of solutions to \req{sqg:daPar}}
Let $w^{\eps,\ell}_k:=w^\eps_{k}{\big|}_{[-\ell,\ell]}$ for $\ell>0$.  
Since $V_2\imb V_\tau\imb V_0$ compactly and continuously, for any $0<\tau<2$, and we have that the family, $\{w^{\eps,\ell}_k\}_{\eps,\ell,k}$, satisfies $(3_\eps)$ and $(4_\eps)$, it follows from the Aubin-Lions lemma that there exists a subsequence $(k_1(j))_{j>0}$ such that $w^{\eps,1}_{k_1(j)}\goesto w^{\eps,1}$ as $j\goesto\infty$, for some $w^{\eps,1}\in C([-1,1];V_{3/2})$, which satisfies \req{sqg:daPar}.  Proceeding, inductively in the same manner, for each $\ell>1$, there exists a subsequence $\{k_\ell(j)\}_{j>0}\sub\{k_{\ell-1}(j)\}_{j>0}$ such that $w^{\eps,\ell}_{k_\ell(j)}\goesto w^{\eps,\ell}$, for some $w^{\eps,\ell}\in C([-\ell,\ell];V_{3/2})$ satisfying \req{sqg:daPar}.  Now consider the sequence given by $w^{\eps,\ell}_{k_\ell(\ell)}$.  Then $w^{\eps,\ell}_{k_\ell(\ell)}\goesto w^{\eps}$, as $\ell\goesto\infty$, for some $w^{\eps}\in C(\R;V_{3/2})$.  Since $w^\eps{\big|}_{[-\ell,\ell]}=w^{\eps,\ell}$ and $(4_\eps)$ is satisfied for each $w^{\eps,\ell}$ uniformly in $\ell$, we may deduce that $w^\eps\in L^\infty(\R; V_2)$ and satisfies \req{sqg:daPar}.

\subsubsection*{Step 3: Passage to the limit $\eps\goesto0$} 


We establish bounds for $w^\eps$, which depend on $m$, but are \textit{independent of} $\eps$.  First, we observe that $w^{\eps}\in L^\infty(\R;L^p)$ uniformly in $\eps$, due to $(1_\eps)$.  We then establish the following with corresponding $\eps$-independent bounds:
\begin{enumerate}[(1)]
	\item $w^\eps\in L^\infty(\R;V_\s)\cap L_{loc}^2(\R; V_{\s+\gam/2})$;
	\item $\bdy_sw^\eps\in L^\infty(\R; V_{\s-2})\cap L^2_{loc}(\R;V_{\s+\gam/2-2})$\;.
\end{enumerate}
We then consider the family $\{w^{\eps,M}\}_{\eps,M}$, where $w^{\eps, M}=w^\eps{\big|}_{[-M,M]}$, and  argue similar to Step 2, upon combining the Aubin-Lions lemma with a Cantor diagonal argument, to deduce the existence of a subsequence $(\eps_j)_{j>0}$ such that $\eps_j\goesto0$ and $w^{\eps_{j}}\goesto w^{\eps}$, as $j\goesto\infty$, for some $w\in C(\mathbb{R}; V_{\til{\s}})$, for some $\til{\s}<\s$.  Observe that necessarily we have $w\in L^{\infty}(\mathbb{R}; V_\s)\cap L^{2}(\mathbb{R};V_{\s+\gam/2})$.  Furthermore, we have
	\begin{align}\notag
	\begin{split}
		 w^{\eps_{j}} &\rightharpoonup  w\quad\quad \text{ weak-}\star \text{ in } L^{\infty}(\mathbb{R}; V_\s),\\
		w^{\eps_{j}} &\rightarrow  w\quad  \text{strongly } \text{ in } L^{2}_{loc}(\mathbb{R}; V_{\til{\s}}),\quad \text{ for any } \til{\s}<\s,\\
		\partial_{s} w^{\eps_{j}} &\rightharpoonup  \partial_{s}w\quad \text{ weak-}\star \text{ in } L^{\infty}(\mathbb{R}, V_{\s-2}),
	\end{split}
	\end{align}
as $\eps_{j}\rightarrow 0$.  We can therefore pass to the limit, in the sense of distribution, {{}  using the Banach-Alaoglu theorem} and show that $w$ satisfies \req{sqg:da}.  Lastly, since
	\begin{align}\notag
		\partial_{s}w=-\kap\Lam^\gam w-\til{u}\cdotp\del w+f-\mu S_m(w-v),\quad \til{u}=\Ri^\perp w,
	\end{align}
we have that $\partial_{s}w\in L^{\infty}(\mathbb{R}; V_{\s-\gamma})\cap L^{2}_{loc}(\mathbb{R}; V_{\s-{\gamma}/{2}})$, and the above equation holds in $L^{\infty}(\mathbb{R};V_{\s-\gamma})\cap L^{2}_{loc}(\mathbb{R}; V_{\s-{\gamma}/{2}})$. This completes the proof.
\end{proof}

}

\begin{prop}[Uniqueness]\label{Uniqueness}
Assume $(H1)-(H7)$.  There exists a unique bounded solution $w\in Y$ of \req{sqg:da}, where
$X$ and $Y$ are defined as in \req{X}.
\end{prop}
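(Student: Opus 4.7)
The plan is to derive an $L^2$ energy estimate for the difference of two bounded solutions that, thanks to the feedback term and Bernstein's inequality, becomes coercive, and then to run Gronwall backwards to $s_0\to-\infty$. Let $w_1,w_2\in Y$ be two solutions of \req{sqg:da} corresponding to the same $v\in B_X^{4R}(0)$, and set $\phi:=w_1-w_2$. Then $\phi$ satisfies
\begin{align*}
\bdy_s\phi + \kap\Lam^\gam\phi + (\Ri^\perp w_1)\cdotp\del\phi + (\Ri^\perp\phi)\cdotp\del w_2 = -\mu S_m\phi,
\end{align*}
and Proposition \ref{da:exist} supplies enough regularity in $s$ to justify pairing with $\phi$ in $L^2$. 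Using that $\Ri^\perp w_1$ is divergence-free, the first transport integral vanishes, and we obtain
\begin{align*}
\frac{1}{2}\frac{d}{ds}\Sob{\phi}{L^2}^2 + \kap\Sob{\Lam^{\gam/2}\phi}{L^2}^2 + \mu\Sob{S_m\phi}{L^2}^2 = -\int_{\T^2}\phi\,(\Ri^\perp\phi)\cdotp\del w_2\,dx.
\end{align*}

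Bounding the right-hand side is the main obstacle, since $\gam<2$ precludes absorbing $\Sob{\del\phi}{L^2}$ directly into the dissipation. My plan is to proceed by duality,
\begin{align*}
\left|\int_{\T^2}\phi\,(\Ri^\perp\phi)\cdotp\del w_2\,dx\right| \leq \Sob{(\Ri^\perp\phi)\phi}{\dot{H}^{1-\s}}\,\Sob{w_2}{\dot{H}^\s},
\end{align*}
then to apply the fractional Leibniz rule (Proposition \ref{prod:rule}) with the H\"older pair $(4/(2-\gam),4/\gam)$, together with the $L^p$-boundedness of $\Ri^\perp$ and the Sobolev embeddings $\dot{H}^{\gam/2}\imb L^{4/(2-\gam)}$ and $\dot{H}^{2-\s-\gam/2}\imb L^{4/\gam}$, to obtain
\begin{align*}
\Sob{(\Ri^\perp\phi)\phi}{\dot{H}^{1-\s}} \ls \Sob{\phi}{\dot{H}^{\gam/2}}\,\Sob{\phi}{\dot{H}^{2-\s-\gam/2}}.
\end{align*}
The crucial point is that hypothesis (H2), $\s>2-\gam$, forces $2-\s-\gam/2<\gam/2$, so interpolation and Young's inequality yield, for any $\de>0$,
\begin{align*}
\Sob{\phi}{\dot{H}^{\gam/2}}\Sob{\phi}{\dot{H}^{2-\s-\gam/2}} \leq \de\Sob{\Lam^{\gam/2}\phi}{L^2}^2 + C_\de\Sob{\phi}{L^2}^2.
\end{align*}
Using the uniform $\dot{H}^\s$-bound on $w_2$ coming from $w_2\in Y$ and choosing $\de$ small enough to absorb half of the dissipation, we arrive at
\begin{align*}
\frac{d}{ds}\Sob{\phi}{L^2}^2 + \kap\Sob{\Lam^{\gam/2}\phi}{L^2}^2 + 2\mu\Sob{S_m\phi}{L^2}^2 \leq 2C_*\Sob{\phi}{L^2}^2,
\end{align*}
for a constant $C_*$ depending only on $\kap,\s,\gam$, and $\Sob{w_2}{Y}$.

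To close the estimate, I would split $\phi=S_m\phi+(I-S_m)\phi$ and invoke Bernstein's inequality (Proposition \ref{bern}) to get $\Sob{(I-S_m)\phi}{L^2}^2\ls 2^{-m\gam}\Sob{\Lam^{\gam/2}\phi}{L^2}^2$. The Standing Hypotheses (H6) and (H7) then ensure that $\mu$ and $2^{m\gam}$ are both sufficiently larger than $C_*$ that the feedback dominates the low-mode contribution of $\Sob{\phi}{L^2}^2$ and the dissipation dominates its high-mode contribution, leaving
\begin{align*}
\frac{d}{ds}\Sob{\phi}{L^2}^2 + 2\eta\,\Sob{\phi}{L^2}^2 \leq 0
\end{align*}
for some $\eta>0$. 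Gronwall on $[s_0,s]$ gives $\Sob{\phi(s)}{L^2}^2\leq e^{-2\eta(s-s_0)}\Sob{\phi(s_0)}{L^2}^2$; since $\phi\in Y\imb C_b(\R;L^2)$, letting $s_0\to-\infty$ forces $\phi\equiv 0$, and uniqueness of $w\in Y$ follows.
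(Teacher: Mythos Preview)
Your energy scheme has a genuine gap at the closure step. The constant $C_*$ you obtain depends on $\Sob{w_2}{Y}=\sup_s\Sob{w_2(s)}{H^\s}$, and you then assert that (H6)--(H7) make $\mu$ and $2^{m\gam}$ large relative to $C_*$. But (H6) fixes $\mu$ in terms of $G_{L^p}$ only, while $C_*$ is controlled (at best, via the a priori estimates of Section~\ref{sect:apriori}) by a power of $G_{H^\s}$, which involves $F_{H^{\s-\gam/2}}=\kap^{-1}\Sob{f}{H^{\s-\gam/2}}$ and other quantities not comparable to $G_{L^p}$. So the inequality $\mu\gtrsim C_*$ that your argument needs is simply not implied by the Standing Hypotheses as stated; your proof would require a different, stronger condition on $\mu$ than (H6).

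The paper avoids this mismatch by testing not with $\phi$ but with $\psi:=-\Lam^{-1}\phi$. This choice produces two algebraic cancellations: both $\int(\Ri^\perp\phi\cdotp\del\phi)\psi$ and $\int(\Ri^\perp\phi\cdotp\del w_2)\psi$ vanish, because after integrating by parts one is left with $\Ri^\perp\phi\cdotp\Ri\phi\equiv0$ pointwise. The only surviving nonlinear term is $\int(\til u_2\cdotp\del\phi)\psi$, which after H\"older and Sobolev is bounded by $C\Sob{w_2}{L^p}\Sob{\psi}{H^{1+1/p}}^2$. Interpolating between $H^{1/2}$ and $H^{(\gam+1)/2}$ and applying Young yields a constant of the form $\kap(G_{L^p}/\kap)^{\gam/(\gam-1-2/p)}$, which is \emph{exactly} the quantity appearing in (H6). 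The remark following the proof in the paper stresses that invoking the $m$-independent $L^p$ bound (Proposition~\ref{prop:goodlp}) here is the crucial step, and indeed the entire De Giorgi machinery of Section~\ref{sect:linfty} is built to supply that bound. Your $L^2$ approach loses these cancellations and hence forfeits the tight link to (H6).
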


\begin{proof}
Suppose there are two bounded solutions of \req{sqg:da}, $w_{1}$ and $w_{2}$, in $Y$ corresponding to the same $v\in B_{X}^{4R}(0)$.  Then $w_1, w_2$ satisfy
\begin{align} \notag
	\begin{split}
	&\bdy_sw_{1}+\kap\Lam^\gam w_{1}+\til{u}_{1}\cdotp\del w_{1}=f-\mu S_m(w_{1}-v),\\
	&\bdy_sw_{2}+\kap\Lam^\gam w_{2}+\til{u}_{2}\cdotp\del w_{2}=f-\mu S_m(w_{2}-v).
	\end{split}
\end{align}
Here $\til{u}_{1}=\Ri^{\perp}w_{1}$, $\til{u}_{2}=\Ri^{\perp}w_{2}$.  We subtract the two equations, denoting $\delta=w_{1}-w_{2}$, to obtain
\begin{align}\label{difference1}
	&\bdy_s\delta+\kap\Lam^\gam \delta+\Ri^{\perp}\delta \cdotp \del\delta+\Ri^{\perp}\delta \cdotp \del w_{2}+\til{u}_{2}\cdotp\del \delta=-\mu S_m\delta. 
\end{align}
Observe that $\psi=-\Lam^{-1}\de\in L^\infty(\R;V_{\s+1})$.  Since \eqref{sqg:da} holds in $L^{\infty}(\mathbb{R}; V_{\s-\gamma})\cap L^{2}_{loc}(\mathbb{R}; V_{\s-{\gamma}/{2}})$, so does \req{difference1}.  In particular, $\psi$ is a valid test function for \req{difference1}.  Thus, upon multiplying \req{difference1} by $\psi$ and integrating over $\T^2$, we obtain
\begin{align}
	\frac{1}{2}\frac{d}{ds}\|\psi\|^{2}_{H^{1/2}}+\kap\|\psi\|^{2}_{H^{\frac{\gamma+1}{2}}}+\mu \|\psi\|^{2}_{H^{1/2}} &=\int_{\T^2} (\til{u}_{2}\cdotp\del \delta)\psi\ {{{}}}+ \mu \int_{\T^2} (T_{m}\delta)\psi\ {{{}}},\notag\\
		&=I+II\label{energy1}
\end{align}
where $T_{m}=I-S_{m}$, and where, by application of the Plancherel theorem and the fact that $\Ri^\perp\de$ is divergence-free, we made use of the facts that
\begin{align}\label{cancel}
	\begin{split}
	\int_{\T^2} (\Ri^{\perp}\delta \cdotp \del\delta) \psi\ {{{}}}&= -\int_{\T^2} ((\Ri^{\perp}\delta)\delta)\cdotp\Ri\delta\ {{{}}}= 0,\\
	\int_{\T^2} (\Ri^{\perp}\delta \cdotp \del w_{2}) \psi\ {{{}}}&= -\int_{\T^2} ((\Ri^{\perp}\delta)w_{2})\cdotp\Ri\delta\ {{{}}}= 0. 
	\end{split}
\end{align}
For $I$, observe that upon integrating by parts, we obtain
	\begin{align}
		I= - \int_{\T^2} (\til{u}_{2}\Lam\psi)\cdotp\del \psi\ {{{}}}\notag.
	\end{align}
Let $1/p+2/q=1$.  Then by using H\"older's inequality, the relation $\til{u}_2=\Ri^\perp w_2$, the Calder\'on-Zygmund theorem, the Sobolev embedding theorem for $H^{\frac{1}{p}}\imb L^{q}$, Proposition \ref{prop:goodlp}, and interpolation and Young's inequalities, we may estimate $I$ as
\begin{align*}
	I&\leq C\|\til{u}_{2}\|_{L^p} \|\Lam\psi\|_{L^q} \|\del \psi\|_{L^q}
							   \leq C\|w_{2}\|_{L^{p}}\|\Lam\psi\|_{L^q} \|\del \psi\|_{L^q}
							    \leq CG_{L^p} \|\psi\|^{2}_{H^{1+\frac{1}{p}}} \\ 
							    &\leq CG_{L^p} \|\psi\|^{\frac{2(1+\frac{2}{p})}{\gamma}}_{H^{\frac{\gamma+1}{2}}} \|\psi\|^{\frac{2(\gamma-1-\frac{2}{p})}{\gamma}}_{H^{1/2}} 
							    \leq \frac{\kap}{4}\|\psi\|^{2}_{H^{\frac{\gamma+1}{2}}}+ C\kap\left(\frac{G_{L^p}}{\kap}\right)^{\gam/(\gam-1-2/p)} \|\psi\|^2_{H^{1/2}}, 
\end{align*}

We use the Bernstein inequalities to estimate $II$ as
\begin{align*}
	II&\leq \mu \|T_{m}\delta\|_{H^{-\frac{\gamma+1}{2}}}\|\psi\|_{H^{\frac{\gamma+1}{2}}}\leq C\mu \|T_{m}\psi\|_{H^{\frac{-\gamma+1}{2}}}\|\psi\|_{H^{\frac{\gamma+1}{2}}}\\
					    &\leq C\frac{\mu}{2^{m\frac{\gamma}{2}}}\|\psi\|_{H^{1/2}}\|\psi\|_{H^{\frac{\gamma+1}{2}}}\leq C^2\frac{\mu^2}{2^{m\gam}\kap}\|\psi\|_{H^{1/2}}^{2}+\frac{\kap}{4}\|\psi\|_{H^{\frac{\gamma+1}{2}}}^{2}.
\end{align*}
Now we combine above estimates with \req{energy1} and condition \req{MainCondition} which is the standing hypotheses $(H6)$ to obtain
\begin{align*}
	\frac{1}{2}\frac{d}{ds}\|\psi\|^{2}_{H^{1/2}}+\frac{\mu}{2}\|\psi\|^{2}_{H^{1/2}} \leq 0. 
\end{align*}

Let $s_{0}<s$ be any real number.  By Gronwall's inequality over $[s_{0}, s]$, we get
\begin{align*}
	\|\psi(s)\|^{2}_{H^{1/2}} \leq \|\psi(s_{0})\|^{2}_{H^{1/2}} e^{-\mu(s-s_{0})}. 
\end{align*}
Since $\sup_{s_0\in\R}\|\psi(s_{0})\|^{2}_{H^{1/2}}<\infty$, we take $s_{0}\goesto -\infty$ to obtain that 
\begin{align*}
	\|\psi(s)\|^{2}_{H^{1/2}} =0. 
\end{align*}
Since $\psi$ has mean zero, this implies $\delta(x,s)=-\Lam\psi(x,s)=0$ for a.e. $x\in\T^2$ and for all $s\in \mathbb{R}$.   Hence, $w_1=w_2$ in $Y$.
\end{proof}

\begin{rmk}
We should emphasize that the crucial step in the proof of Proposition \ref{Uniqueness} (and of Theorem \ref{W-Lip-2} to follow) is the application of Proposition \ref{prop:goodlp}.  Indeed, the bulk of the analysis in this paper is devoted to the proof of Proposition \ref{prop:goodlp}.
\end{rmk}

\subsection{Lipschitz property}
Finally, to apply Theorem \ref{thm:gen:detform} and guarantee that \req{df} defines an ODE with locally Lipschitz vector field, we must show that $S_mW:B_X^{4R}(0)\goesto Y$ is a Lipschitz map, where $R$ is given by \req{R:rad}.  In fact, we show more. We show that $W$ itself is a Lipschitz map in an appropriate topology (see \req{W:lip1} below). Then by the boundedness of $S_{m}$, we get that $S_mW$ is a Lipschitz map.

\begin{prop}[$W$ Lipschitz]\label{W-Lip-2}
Assume standing hypotheses $(H1)-(H7)$ hold.  There exists an absolute constant $c_1>0$ such that 
$W: B_{X}^{4R}(0)\goesto C_{b}(\mathbb{R};V_{-{1}/{2}})$ is a Lipschitz function with global Lipschitz constant $c_{1}$. Moreover, $S_{m}W: B_{X}^{4R}(0)\goesto Y$ is a Lipschitz function with global Lipschitz constant
\begin{align}\label{LW}
	L_{W}=c_12^{m(\sigma+\frac{1}{2})}.
\end{align}
\end{prop}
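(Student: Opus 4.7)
The plan is to mirror the computation in the proof of Proposition \ref{Uniqueness}, adding only one new term arising from the difference of the $v$'s. Let $v_1, v_2 \in B_X^{4R}(0)$, let $w_i := W(v_i) \in Y$ be the corresponding solutions furnished by Proposition \ref{da:exist}, and set $\delta := w_1 - w_2$ and $v := v_1 - v_2$. Subtracting the equations \req{sqg:da} for $w_1$ and $w_2$ yields
\begin{align*}
\bdy_s \delta + \kap \Lam^\gam \delta + \Ri^\perp \delta \cdotp \del \delta + \Ri^\perp \delta \cdotp \del w_2 + \til{u}_2 \cdotp \del \delta = -\mu S_m \delta + \mu S_m v,
\end{align*}
which holds in $L^\infty(\mathbb{R};V_{\s-\gam})\cap L^2_{loc}(\mathbb{R};V_{\s-\gam/2})$, so that $\psi := -\Lam^{-1}\delta \in L^\infty(\mathbb{R};V_{\s+1})$ is an admissible test function.

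Testing by $\psi$ and using the cancellations \req{cancel} produces
\begin{align*}
\tfrac{1}{2}\tfrac{d}{ds}\|\psi\|_{H^{1/2}}^{2}+\kap\|\psi\|_{H^{(\gam+1)/2}}^{2}+\mu\|\psi\|_{H^{1/2}}^{2}=I+II+III,
\end{align*}
where $I = -\int_{\T^2}(\til{u}_2 \Lam\psi)\cdotp \del\psi\,dx$ and $II = \mu\int_{\T^2}(T_m \delta)\psi\,dx$ are controlled exactly as in Proposition \ref{Uniqueness} --- via Proposition \ref{prop:goodlp} and the Standing Hypotheses $(H6)$--$(H7)$ --- so as to be absorbed into $\tfrac{\kap}{2}\|\psi\|_{H^{(\gam+1)/2}}^{2} + \tfrac{\mu}{2}\|\psi\|_{H^{1/2}}^{2}$. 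The sole new contribution is $III := \mu \int_{\T^2}(S_m v)\psi\,dx$, which I bound by Cauchy--Schwarz in the $H^{-1/2}$--$H^{1/2}$ pairing together with the boundedness of $S_m$ on $H^{-1/2}$:
\begin{align*}
III \leq \mu \|v\|_{H^{-1/2}}\|\psi\|_{H^{1/2}} \leq \tfrac{\mu}{4}\|\psi\|_{H^{1/2}}^{2} + \mu \|v\|_{H^{-1/2}}^{2} \leq \tfrac{\mu}{4}\|\psi\|_{H^{1/2}}^{2} + C\mu\|v\|_{X}^{2},
\end{align*}
where the last inequality uses the embedding $V_\s \imb V_{-1/2}$ (valid since $\s > 2-\gam > -1/2$) and the definition of $\|\cdot\|_X$. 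Consolidating the three estimates gives
\begin{align*}
\tfrac{d}{ds}\|\psi\|_{H^{1/2}}^{2}+\tfrac{\mu}{2}\|\psi\|_{H^{1/2}}^{2}\leq C\mu\|v\|_{X}^{2},
\end{align*}
so Gronwall's inequality over $[s_0,s]$, combined with $\sup_{s_0 \in \mathbb{R}}\|\psi(s_0)\|_{H^{1/2}} < \infty$ (inherited from $w_1, w_2 \in Y$), and letting $s_0 \to -\infty$, yields
\begin{align*}
\sup_{s\in\mathbb{R}}\|\delta(s)\|_{H^{-1/2}} = \sup_{s\in\mathbb{R}}\|\psi(s)\|_{H^{1/2}} \leq c_1 \|v\|_X,
\end{align*}
establishing that $W : B_X^{4R}(0) \to C_b(\mathbb{R};V_{-1/2})$ is Lipschitz with an absolute constant $c_1$.

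To upgrade to the statement about $S_m W$, I apply the Bernstein inequality from Proposition \ref{bern} to the Littlewood--Paley truncation $S_m$, raising regularity from $H^{-1/2}$ to $H^\s$ at the usual frequency cost:
\begin{align*}
\|S_m \delta(s)\|_{H^\s} \leq C\,2^{m(\s+1/2)}\|S_m \delta(s)\|_{H^{-1/2}} \leq C\,2^{m(\s+1/2)}\|\delta(s)\|_{H^{-1/2}}.
\end{align*}
Taking the supremum over $s \in \mathbb{R}$ and combining with the previous step produces the claimed Lipschitz constant $L_W = c_1 2^{m(\s+1/2)}$. The main technical obstacle is not genuinely new: it is exactly the one encountered in Proposition \ref{Uniqueness}, namely securing a uniform (in $v \in B_X^{4R}(0)$) $L^p$ bound on $W(v)$ that makes the transport term $I$ amenable to interpolation and absorption --- this is precisely the content of the De Giorgi-based Proposition \ref{prop:goodlp}. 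The truly new term $III$ is handled by a straightforward Cauchy--Schwarz estimate swallowed by the damping $\mu\|\psi\|_{H^{1/2}}^2$ that the Standing Hypothesis $(H6)$ makes available.
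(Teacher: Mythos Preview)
Your proof is correct and follows essentially the same route as the paper's own argument: both mirror the energy estimate from Proposition \ref{Uniqueness}, isolate the single new term coming from $S_m(v_1-v_2)$, bound it via the $H^{-1/2}$--$H^{1/2}$ duality and Young's inequality so that it is absorbed by the damping $\mu\|\psi\|_{H^{1/2}}^2$, apply Gronwall and send $s_0\to-\infty$, and then invoke Bernstein to pass from $H^{-1/2}$ to $H^\s$ for $S_mW$. The only cosmetic difference is that the paper drops the $S_m$ on the difference $v_1-v_2$ (since $v_i\in S_{m-1}V_\s$), whereas you carry it and use boundedness of $S_m$; both are fine.
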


\begin{proof}
Suppose $w_1$ and $w_2$ are the solutions of \req{sqg:da} in $Y$ corresponding to $v_{1}$ and $v_{2}$ belonging both to $B_{X}^{4R}(0)$:
\begin{align} \notag
	&\bdy_sw_{1}+\kap\Lam^\gam w_{1}+\til{u}_{1}\cdotp\del w_{1}=f-\mu S_m(w_{1}-v_{1}),\\
	&\bdy_sw_{2}+\kap\Lam^\gam w_{2}+\til{u}_{2}\cdotp\del w_{2}=f-\mu S_m(w_{2}-v_{2}).\notag
\end{align}
Here $\til{u}_{1}=\Ri^{\perp}w_{1}$, $\til{u}_{2}=\Ri^{\perp}w_{2}$ and $s\in \mathbb{R}$. Subtract, denoting $\delta=w_{1}-w_{2}$ and $\z=v_{1}-v_{2}$, to obtain
\begin{align}\label{difference3}
	&\bdy_s\delta+\kap\Lam^\gam \delta+\Ri^{\perp}\delta \cdotp \del\delta+\Ri^{\perp}\delta \cdotp \del w_{2}+\til{u}_{2}\cdotp\del \delta=-\mu S_m(\delta- \z). 
\end{align}
We multiply \req{difference3} by $\psi=-\Lam^{-1}\delta$, which, as before, is an appropriate test function, and integrate over $\T^2$ to obtain
\begin{align}\label{energy3}
	\frac{1}{2}\frac{d}{ds}\|\psi\|^{2}_{H^{1/2}}+\kap\|\psi\|^{2}_{H^{\frac{\gamma+1}{2}}}+\mu \|\psi\|^{2}_{H^{1/2}} &=\int_{\T^2} (\til{u}_{2}\cdotp\del \delta)\psi\ {{{}}}+ \mu\int_{\T^2} T_{m}\delta \psi\ {{{}}}+ \mu\int_{\T^2} \z \psi\ {{{}}}, \notag \\
									      &=I+II+III,
\end{align}
where $T_{m}=I-S_{m}$, and where we used \req{cancel}.

We estimate $I$ as before.
\begin{align*}
	|I|\leq \frac{\kap}{4}\|\psi\|^{2}_{H^{\frac{\gamma+1}{2}}}+ C\kap\left(\frac{G_{L^p}}{\kap}\right)^{\gam/(\gam-1-2/p)}\|\psi\|^2_{H^{1/2}}
\end{align*}
Also, 
\begin{align*}
	|II|\leq \frac{C\mu^2}{2^{m\gam}\kap}\|\psi\|^{2}_{H^{1/2}}+ \frac{\kap}4 \|\psi\|^2_{H^{\frac{\gamma+1}{2}}}, 
\end{align*}
and
\begin{align*}
	|III|&\leq \mu\|\z\|_{H^{-1/2}}\|\psi\|_{H^{1/2}}\leq \mu\|\z\|_{H^{-1/2}}^{2}+ \frac{\mu}{4}\|\psi\|_{H^{1/2}}^{2}. 
\end{align*}

We return to \req{energy3} and combine $I- III$ with \req{MainCondition} which is the standing hypotheses $(H6)$ and Sobolev embedding  estimate

\begin{align*}
	\frac{1}{2}\frac{d}{ds}\|\psi\|^{2}_{H^{1/2}}+\frac{\kap}2\|\psi\|^{2}_{H^{\frac{\gamma+1}{2}}}+\frac{\mu}2\|\psi\|^{2}_{H^{1/2}}&\leq \mu \|\z\|_{H^{-1/2}}^{2}\leq C\mu\|\z\|_{H^{\sigma}}^{2}\leq C\mu\Sob{\z}{X}^{2}.	
\end{align*}
We apply Gronwall's inequality between $[s_{0}, s]$, and observe that $\Sob{\psi(s_{0})}{H^{1/2}}$ is bounded for any $s_{0}\in \mathbb{R}$.  Thus, by taking $s_{0}\rightarrow -\infty$, we obtain
\begin{align}\label{W:lip1}
	\Sob{W(v_1)-W(v_2)}{H^{-1/2}}^2=\Sob{\Lam^{-1}\de}{H^{1/2}}^2=\|\psi(s)\|^{2}_{H^{1/2}}\leq C\Sob{\z}{X}^{2},
\end{align}
for some absolute constant $C>0$.  In particular, we have
\begin{align*}
	\|S_{m}W(v_{1})(s)-S_{m}W(v_{2})(s)\|_{H^{\sigma}}&=\|S_{m}w_{1}(s)-S_{m}w_{2}(s)\|_{H^{\sigma}} \notag\\
										&\leq C\|S_{m}\psi(s)\|_{H^{\s+1}}\\
									       &\leq C2^{m(\s+\frac{1}{2})}\|S_{m}\psi(s)\|_{H^{1/2}}\\
									       &\leq C2^{m(\s+\frac{1}{2})}\|\psi(s)\|_{H^{1/2}}\\
									       &\leq C2^{m(\s+\frac{1}{2})}\Sob{\z}{X}=C2^{m(\s+\frac{1}{2})}\Sob{v_{1}-v_{2}}{X}.			
\end{align*}
Thus, 
\begin{align*}
	\Sob{S_{m}W(v_{1})-S_{m}W(v_{2})}{Y}\leq {L}_{W}\Sob{v_{1}-v_{2}}{X}, 			
\end{align*}
where ${L}_{W}=C2^{m(\s+\frac{1}{2})}$, as desired.
\end{proof}

\section{A priori estimates: $\eps$-independent bounds}\label{sect:apriori}

As in section \ref{sect:detform}, \textit{we operate under the Standing Hypotheses}, $(H1)-(H7)$.  For clarity, we will indicate the origin of the conditions stated there on $\mu, m$ by emphasizing them in the propositions in which they are needed.  


\subsection{$L^2$, $m$-dependent $L^p$, and $H^\s$ uniform bounds} 

For each $\eps>0$, let $w^\eps\in C(\R; V_{3/2})\cap L^\infty(\R; V_2)$ be the unique strong solution to
	\begin{align}\label{nd:sqg:fbPar}
		\begin{split}
		&\bdy_sw^\eps-\eps\De w^\eps+\kap\Lam^\gam w^\eps+\til{u}^\eps\cdotp\del w^\eps=f^\eps-\mu S_m(w^\eps-v),\quad\til{u}^\eps=\Ri^\perp w^\eps,\quad v\in B^{4R}_X(0),
		\end{split}
	\end{align}
where $R>0$ is given by \req{R:rad}. 
We state the following three propositions, which furnish $\eps$-independent, but $m$-\textit{dependent} bounds for the family $\{w^\eps\}_{\eps>0}$.  These bounds were invoked in Step 3 of the proof of Proposition \ref{da:exist} to ensure that the limiting function, $w\in Y$, of a subsequence of $(w^\eps)_{\eps>0}$ exists.  Their proofs follow exactly as in those of the corresponding propositions in \cite{jmt:sqgda} and the fact that $w^\eps(s_0)e^{-\eps(s-s_0)}\goesto0$ as $s_0\goesto-\infty$, in the respective topology (cf. Propositions 6, 7, and 9 of \cite{jmt:sqgda}).  However, to show that the limiting function $w$ is a unique solution, so that the map $W:v\mapsto w$ is well-defined, we ultimately require estimates that are $m$-\textit{independent}.  We deal with this issue in section \ref{sect:linfty}.  

Let $F_{H^{-\gam/2}}, \Tht_{H^\s}$ given by \req{fgam} and \req{sqg:hs:ball}, respectively.  Define
\begin{align}\label{g2}
		G_{L^2}^2:=C\left(\frac{\kap}{\mu}{F_{H^{-\gam/2}}^2}+\Tht_{H^\s}^2\right).
	\end{align}

\begin{prop}\label{prop:l2:da}
Assume that $(H1)-(H7)$ hold and let $w^\eps$ be the solution of \req{nd:sqg:fbPar}.  There exist absolute constants $C_0, c_0>0$ with $c_0$ depending on $C_0$ such that if \req{g2} holds with $C=C_0$ and the first part of the hypotheses $(H7)$ is satisfied, namely, $\mu$ and $m$ satisfy
	\begin{align}\label{l2:modes}
		2^{m\gam}\geq c_0\frac{\mu}{\kap},
	\end{align}
then
	\begin{align}\label{l2:bounds}
		\sup_{s\in\R}\Sob{w^\eps(s)}{L^2}
		\leq {G_{L^2}}.
	\end{align}
Moreover, the following energy inequality holds:
	\begin{align}\label{l2:ineq}
		\Sob{w^\eps(s_2)}{L^2}^2+\kap\int_{s_1}^{s_2}\Sob{w^\eps(\tau)}{H^{\gam/2}}^2\ d\tau\leq \Sob{w^\eps(s_1)}{L^2}^2+\mu(s_2-s_1)G_{L^2}^2,\quad -\infty<s_1< s_2<\infty.
	\end{align}
\end{prop}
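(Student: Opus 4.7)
\textbf{Proof plan for Proposition \ref{prop:l2:da}.}
My approach is to test \req{nd:sqg:fbPar} against $w^\eps$ in $L^2(\T^2)$ and then use the nudging feedback term to drive a pullback Gronwall estimate on $\R$. After testing, the transport nonlinearity vanishes since $\del\cdotp\til u^\eps=0$, the $-\eps\De$ term contributes the nonnegative quantity $\eps\Sob{\del w^\eps}{L^2}^2$, and the fractional dissipation gives $\kap\Sob{\Lam^{\gam/2}w^\eps}{L^2}^2$ by the $p=2$ case of Proposition \ref{lb}. Self-adjointness of $S_m$ lets me split the feedback term as $\mu\Sob{S_m w^\eps}{L^2}^2-\mu\lb S_m v, w^\eps\rb$. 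This produces the differential identity
\[
	\tfrac{1}{2}\tfrac{d}{ds}\Sob{w^\eps}{L^2}^2+\kap\Sob{\Lam^{\gam/2}w^\eps}{L^2}^2+\mu\Sob{S_m w^\eps}{L^2}^2\leq |\lb f^\eps,w^\eps\rb|+\mu|\lb S_m v,w^\eps\rb|.
\]

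The crux — and the only place hypothesis \req{l2:modes} intervenes — is converting the \emph{partial} damping $\mu\Sob{S_m w^\eps}{L^2}^2$ into a genuine $L^2$ damping of $w^\eps$. Decomposing $w^\eps=S_m w^\eps+T_m w^\eps$ with $T_m=I-S_m$ and applying the Bernstein inequality (Proposition \ref{bern}) to $T_m w^\eps$ yields $\Sob{T_m w^\eps}{L^2}^2\leq C2^{-m\gam}\Sob{\Lam^{\gam/2}w^\eps}{L^2}^2$, so by Parseval,
\[
	\mu\Sob{S_m w^\eps}{L^2}^2=\mu\Sob{w^\eps}{L^2}^2-\mu\Sob{T_m w^\eps}{L^2}^2\geq \mu\Sob{w^\eps}{L^2}^2-\tfrac{\kap}{2}\Sob{\Lam^{\gam/2}w^\eps}{L^2}^2,
\]
provided $c_0$ is taken large enough. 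The remaining right-hand side terms are standard: Cauchy–Schwarz and Young give $|\lb f^\eps,w^\eps\rb|\leq \tfrac{\kap}{4}\Sob{\Lam^{\gam/2}w^\eps}{L^2}^2+\kap F_{H^{-\gam/2}}^2$, while $\mu|\lb S_m v,w^\eps\rb|\leq \tfrac{\mu}{2}\Sob{w^\eps}{L^2}^2+\tfrac{\mu}{2}\Sob{S_m v}{L^2}^2$, with $\Sob{S_m v}{L^2}\leq\Sob{v}{H^\s}\leq 4R\ls \Tht_{H^\s}$ courtesy of $v\in B_X^{4R}(0)$ and \req{R:rad}. For $C_0$ large enough in \req{g2}, these collapse into the scalar ODI
\[
	\tfrac{d}{ds}\Sob{w^\eps}{L^2}^2+\mu\Sob{w^\eps}{L^2}^2\leq \mu G_{L^2}^2.
\]

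To obtain the $L^\infty$-in-time bound \req{l2:bounds} on all of $\R$, I would run a pullback Gronwall argument: integrating from $s_0$ to $s$ gives
\[
	\Sob{w^\eps(s)}{L^2}^2\leq \Sob{w^\eps(s_0)}{L^2}^2 e^{-\mu(s-s_0)}+G_{L^2}^2\bigl(1-e^{-\mu(s-s_0)}\bigr),
\]
and since $w^\eps\in L^\infty(\R;V_2)$ supplies a (possibly $\eps$-dependent) bound on $\Sob{w^\eps(s_0)}{L^2}$ which is killed by $e^{-\mu(s-s_0)}$, sending $s_0\goesto-\infty$ yields \req{l2:bounds}. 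The energy inequality \req{l2:ineq} then follows by re-integrating the differential inequality directly over $[s_1,s_2]$, this time retaining the full $\kap$-dissipation term rather than absorbing $\kap/2$ of it into the forcing estimate (and correspondingly tightening the Young splittings so the forcing bound becomes $\mu G_{L^2}^2$). The only delicate step I anticipate is the Bernstein-based mode-threshold reduction above, which is precisely where hypothesis \req{l2:modes} gets used; everything else is routine energy machinery.
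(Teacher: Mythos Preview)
Your overall strategy is correct and matches what the paper defers to (the proof is referred to Proposition~6 of \cite{jmt:sqgda}), but there is a technical slip in your damping step. The Littlewood--Paley operators $S_m$ and $T_m=I-S_m$ are \emph{smooth} Fourier multipliers, not sharp orthogonal projections: their symbols overlap on the shell $\{2^{m-1}\le|\xi|\le 2^m\}$, so neither $\lb S_m w^\eps, w^\eps\rb=\Sob{S_m w^\eps}{L^2}^2$ (your ``self-adjointness'' claim) nor the Pythagorean identity $\Sob{w^\eps}{L^2}^2=\Sob{S_m w^\eps}{L^2}^2+\Sob{T_m w^\eps}{L^2}^2$ (your ``Parseval'' claim) holds.

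The fix is immediate and is exactly what the paper does in its other energy arguments (see the handling of term $II$ in the proofs of Proposition~\ref{Uniqueness} and Proposition~\ref{prop:lvlsets}): write $-\mu S_m w^\eps=-\mu w^\eps+\mu T_m w^\eps$ directly at the level of the equation, so testing against $w^\eps$ produces the full damping $\mu\Sob{w^\eps}{L^2}^2$ on the left and the error term $\mu\lb T_m w^\eps,w^\eps\rb$ on the right. One then bounds
\[
\mu\bigl|\lb T_m w^\eps,w^\eps\rb\bigr|\le \mu\Sob{T_m w^\eps}{H^{-\gam/2}}\Sob{w^\eps}{H^{\gam/2}}\le C\mu\, 2^{-m\gam/2}\Sob{w^\eps}{L^2}\Sob{w^\eps}{H^{\gam/2}}\le \tfrac{\kap}{4}\Sob{w^\eps}{H^{\gam/2}}^2+\tfrac{C\mu^2}{\kap\, 2^{m\gam}}\Sob{w^\eps}{L^2}^2,
\]
after which \req{l2:modes} absorbs the last term into the damping. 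With this correction in place, your pullback Gronwall argument and the integrated energy inequality go through as you outlined.
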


Let $F_{L^p}, G_{L^2}$ be given as in \req{fp} and \req{g2}, respectively.  Define 
	\begin{align}\label{Rp}
		\til{G}_{L^p}^p:=C\left(\frac{\kap^p}{\mu^p}\left(F_{L^p}^p+\left(\frac{G_{L^2}}{p}\right)^p\right)+\Tht_{L^p}^p+C(m,p)^pG_{L^2}^p\right),
	\end{align}
where $C>0$ is an absolute constant (to be specified) and
	\begin{align}\notag
		C(m,p)^p:=2^{m(p-2)}.
	\end{align}

\begin{prop}\label{prop:lp}
Assume that $(H1)-(H7)$ hold and let $w^\eps$ be the solution of \req{nd:sqg:fbPar}.  There exist absolute constants $C_0, c_0$, independent of $m$, with $c_0$ depending on $C_0$, such that if \req{Rp} holds with $C=C_0$, then for $\s, p$ given by $(H3)$, we have
	\begin{align}\label{mp0:final}
		\sup_{s\in\R}\Sob{{w^\eps}(s)}{L^p}\leq p\frac{\mu}{\kap}\til{G}_{L^p}.
	\end{align}
\end{prop}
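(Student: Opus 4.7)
The plan is to derive a differential inequality for $\Sob{w^\eps(s)}{L^p}^p$ and integrate from $s_0=-\infty$, exploiting the damping $\mu$ built into the feedback. The natural test function is $|w^\eps|^{p-2}w^\eps$, which is admissible by the regularity $w^\eps\in C(\R;V_{3/2})\cap L^\infty(\R;V_2)$ guaranteed by Step 1 of the proof of Proposition \ref{da:exist}. The transport term vanishes by the divergence-free property of $\til{u}^\eps=\Ri^\perp w^\eps$,
\begin{equation*}
\int_{\T^2}|w^\eps|^{p-2}w^\eps\,(\til{u}^\eps\cdotp\del w^\eps)\,dx=\frac{1}{p}\int_{\T^2}\til{u}^\eps\cdotp\del(|w^\eps|^p)\,dx=0,
\end{equation*}
the parabolic regularization $-\eps\De$ contributes a non-negative term, and Proposition \ref{lb} furnishes the dissipative lower bound $(2\kap/p)\Sob{\Lam^{\gam/2}(|w^\eps|^{p/2})}{L^2}^2$. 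For $p$ an even integer, the sharper second inequality of Proposition \ref{lb} (using that $\int w^\eps\,dx=0$, which is preserved by the flow) extracts an additional $c\kap\Sob{w^\eps}{L^p}^p$ term; for general $p\geq 2$, the same damping is obtained by Poincar\'e applied to the mean-zero part of $|w^\eps|^{p/2}$, at the price of a lower-order remainder controlled by $\Sob{w^\eps}{L^2}\leq G_{L^2}$. This is the origin of the $(G_{L^2}/p)^p$ contribution in $\til G_{L^p}^p$. The forcing is handled by H\"older, giving $\int|w^\eps|^{p-2}w^\eps f^\eps\,dx\leq\Sob{f}{L^p}\Sob{w^\eps}{L^p}^{p-1}$.

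The feedback is the heart of the estimate. Using $S_m=I-T_m$, I would decompose
\begin{equation*}
-\mu\int_{\T^2}|w^\eps|^{p-2}w^\eps\,S_m(w^\eps-v)\,dx=-\mu\Sob{w^\eps}{L^p}^p+\mu\int_{\T^2}|w^\eps|^{p-2}w^\eps\,(T_mw^\eps+S_mv)\,dx.
\end{equation*}
The first term is the nudging damping. The $S_mv$ correction is controlled by H\"older and the embedding $H^\s\hookrightarrow L^p$ (valid under $(H2)$--$(H3)$) together with $\Sob{v}{X}\leq 4R$, yielding a $\Tht_{L^p}$-sized contribution. For the $T_mw^\eps$ correction, a dyadic decomposition with the Bernstein inequality (Proposition \ref{bern}) trades $L^p$ for $L^2$ on high-frequency blocks, and the uniform $L^2$ bound from Proposition \ref{prop:l2:da} gives $\Sob{T_mw^\eps}{L^p}\ls 2^{m(1-2/p)}G_{L^2}$; its $p$-th power is exactly the $C(m,p)^p G_{L^2}^p = 2^{m(p-2)}G_{L^2}^p$ entry in $\til G_{L^p}^p$.

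Combining these, Young's inequality $ab^{p-1}\leq \tfrac{\mu}{2p}b^p+Cp^{p-1}a^p/\mu^{p-1}$ applied to each H\"older pairing, together with absorption of the remainders against the $\kap$-damping and $\mu$-damping on the left, leads to
\begin{equation*}
\frac{1}{p}\frac{d}{ds}\Sob{w^\eps}{L^p}^p+\frac{\mu}{2}\Sob{w^\eps}{L^p}^p\leq \mathrm{RHS},
\end{equation*}
where the right-hand side assembles the contributions identified above with exactly the scaling of $\mu\,\til G_{L^p}^p\cdot p^{p-1}$; the choice of $c_0$ in \req{l2:modes} under $(H7)$ must be sufficiently large to license the absorption. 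Since $w^\eps\in L^\infty(\R;V_2)$ globally, Gronwall's inequality on $[s_0,s]$ followed by $s_0\to-\infty$ yields \req{mp0:final}.

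The principal obstacle is the precise estimation of $\Sob{T_mw^\eps}{L^p}$: the naive dyadic sum $\sum_{j\geq m}2^{j(1-2/p)}\Sob{\lpj w^\eps}{L^2}$ does not converge against a uniform $L^2$ bound alone. I would therefore either interpolate against the time-averaged $H^{\gam/2}$ control provided by \req{l2:ineq}, or estimate the $L^p$ norm via the Littlewood--Paley square function (Proposition \ref{prop:lpthm}) and exploit the geometric decay $2^{-j\gam/2}$ already paid for by the dissipation on the corresponding frequency blocks. Either route, combined with \req{l2:modes}, delivers the $2^{m(p-2)}$ factor cleanly and keeps the resulting bound independent of the intermediate $V_2$ regularity produced by the $\eps$-regularization.
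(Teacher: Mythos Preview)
Your overall plan (multiply by $|w^\eps|^{p-2}w^\eps$, discard the $\eps\Delta$ term by sign, invoke Proposition~\ref{lb}, and run Gronwall from $-\infty$) is sound, and your identification of the $(G_{L^2}/p)^p$ contribution as the Poincar\'e correction for the mean of $|w^\eps|^{p/2}$ is exactly right. The gap is in the feedback term. After your split $-\mu S_m w^\eps=-\mu w^\eps+\mu T_m w^\eps$ you must control $\Sob{T_m w^\eps(s)}{L^p}$ \emph{pointwise in $s$}, and you yourself note that the Bernstein sum $\sum_{j>m}2^{j(1-2/p)}\Sob{\lpj w^\eps}{L^2}$ diverges against a bare $L^2$ bound. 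Neither of your proposed repairs closes: the inequality \req{l2:ineq} controls only $\int_{s_1}^{s_2}\Sob{w^\eps}{H^{\gam/2}}^2$ with a bound that grows linearly in $s_2-s_1$, so it cannot produce a uniform-in-$s$ estimate on $\Sob{T_m w^\eps(s)}{L^p}$ (and even the summability condition $1-2/p-\gam/2<0$ you would need is not guaranteed by $(H1)$--$(H3)$ for all $\gam\in(1,2)$); and the dissipation $\Sob{\Lam^{\gam/2}|w^\eps|^{p/2}}{L^2}^2$ acts on $|w^\eps|^{p/2}$, not frequency-by-frequency on $w^\eps$, so there is no mechanism to absorb the high modes of $T_m w^\eps$ into it.

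The remedy is to drop the split and bound the feedback directly by H\"older,
\[
\Bigl|\mu\int_{\T^2}|w^\eps|^{p-2}w^\eps\,S_m(w^\eps-v)\,dx\Bigr|\leq \mu\bigl(\Sob{S_m w^\eps}{L^p}+\Sob{S_m v}{L^p}\bigr)\Sob{w^\eps}{L^p}^{p-1},
\]
and then apply the \emph{low-frequency} Bernstein inequality (second line of Proposition~\ref{bern}) to $S_m w^\eps$ to obtain $\Sob{S_m w^\eps}{L^p}\leq C\,2^{m(1-2/p)}\Sob{w^\eps}{L^2}\leq C\,2^{m(1-2/p)}G_{L^2}$, which is precisely the $C(m,p)G_{L^2}$ entry in $\til G_{L^p}$. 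The damping then comes from $\kap$ via Proposition~\ref{lb} (with your Poincar\'e correction), not from $\mu$. The resulting inequality $\frac{d}{ds}\Sob{w^\eps}{L^p}+c\tfrac{\kap}{p}\Sob{w^\eps}{L^p}\leq \Sob{f}{L^p}+C\mu\bigl(\Tht_{L^p}+2^{m(1-2/p)}G_{L^2}\bigr)+\ldots$ integrates to give exactly the prefactor $p\mu/\kap$ appearing in \req{mp0:final}. This is the route the paper defers to in \cite{jmt:sqgda}; contrast it with the proof of Proposition~\ref{prop:goodlp}, where the split \emph{is} used because by then an $H^\s$ bound is available to make the high-mode sum converge.
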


Finally, let $G_{L^2}$ be given as in \req{g2} and $\Tht_{H^\s}$ by \req{sqg:hs:ball}.  Define
	\begin{align}\label{R:sig}
		\begin{split}
		{{F}_{H^{\s-\gam/2}}}:=\frac{1}{{\kap}}\Sob{f}{H^{\s-\gam/2}},\quad \Xi_{r,\al}:=C\left(\frac{{\sup_{s\in\R}}\Sob{w^\eps(s)}{L^r}}{\kap}\right)^{\frac{2\al}{\gam-1-2/r}},\quad
		\end{split}
	\end{align}
and
	\begin{align}\label{g:sig:til}
		{\til{G}_{H^\s}}^2:=C\left(\frac{\kap}{\mu}{{F}_{H^{\s-\gam/2}}^2}+\Tht_{H^\s}^2+\Xi_{p,\s}G_{L^2}^2\right).
	\end{align}

\begin{prop}\label{prop:hs:a priori}
Assume that $(H1)-(H7)$ hold and let $w^\eps$ be the solution of \req{nd:sqg:fbPar}.  There exists absolute constants $C_0, c_0>0$, with $c_0$ depending on $C_0$ such that if \req{R:sig}, \req{g:sig:til} hold with $C=C_0$, and the first part of the standing hypotheses $(H7)$ is satisfied, namely, $\mu$ and $m$ satisfy
	\begin{align}\label{hs:modes}
		2^{\gam m}\geq c_0\frac{{{\mu}}}{{{\kap}}},
	\end{align}
then
	\begin{align}\notag
		\sup_{s\in\R}\Sob{w^\eps(s)}{H^\s}\leq {\til{G}_{H^\s}},
	\end{align}
and
	\begin{align}\notag
		\kap\int_{s_1}^{s_2}\Sob{w^\eps(\tau)}{H^{\s+\gam/2}}^2\ d\tau\leq\Sob{w^{\eps}(s_1)}{H^\s}^2+\Xi_{p,\s}G_{L^2}^2+\mu(s_2-s_1)\til{G}_{H^\s}^2,\quad -\infty<s_1<s_2<\infty.
	\end{align}
\end{prop}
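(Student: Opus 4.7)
\textbf{Plan for Proposition \ref{prop:hs:a priori}.} I would run a standard $H^\s$ energy estimate on \req{nd:sqg:fbPar}, closed via the $L^2$ and $L^p$ bounds already obtained in Propositions \ref{prop:l2:da} and \ref{prop:lp}. Apply $\Lam^\s$ to \req{nd:sqg:fbPar}, pair with $\Lam^\s w^\eps$ in $L^2$, and discard the nonnegative viscosity contribution $\eps\Sob{w^\eps}{H^{\s+1}}^2$. The source term is bounded by Cauchy--Schwarz in the $H^{\s-\gam/2}/H^{\s+\gam/2}$ duality followed by Young's inequality, absorbing $(\kap/8)\Sob{w^\eps}{H^{\s+\gam/2}}^2$ and leaving $C\kap F_{H^{\s-\gam/2}}^2$. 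For the feedback term, split $w^\eps=S_mw^\eps+T_mw^\eps$; self-adjointness of $S_m$ yields $-\mu\lb S_mw^\eps,\Lam^{2\s}w^\eps\rb=-\mu\Sob{w^\eps}{H^\s}^2+\mu\Sob{T_mw^\eps}{H^\s}^2$, and the Bernstein inequality (Proposition \ref{bern}) gives $\Sob{T_mw^\eps}{H^\s}^2\leq C2^{-m\gam}\Sob{w^\eps}{H^{\s+\gam/2}}^2$, which the hypothesis \req{hs:modes} renders absorbable into the dissipation. The coupling to $v$ is handled by Cauchy--Schwarz and Proposition \ref{prop:X:att}: $\mu|\lb S_mv,\Lam^{2\s}w^\eps\rb|\leq(\mu/4)\Sob{w^\eps}{H^\s}^2+C\mu\THs^2$.

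\textbf{The nonlinear term is the main obstacle.} Since $\til u^\eps$ is divergence-free, integration by parts produces
\[
|\lb\til u^\eps\cdotp\del w^\eps,\Lam^{2\s}w^\eps\rb|\leq\Sob{\Lam^{\s-\gam/2+1}(\til u^\eps w^\eps)}{L^2}\Sob{w^\eps}{H^{\s+\gam/2}}.
\]
I would then apply the fractional product rule (Proposition \ref{prod:rule}) with H\"older exponents $1/2=1/p+1/q$, use Calder\'on--Zygmund to replace $\til u^\eps$ by $w^\eps$, and invoke the Sobolev embedding $H^{2/p}\imb L^q$, to arrive at
\[
\Sob{\Lam^{\s-\gam/2+1}(\til u^\eps w^\eps)}{L^2}\leq C\Sob{w^\eps}{L^p}\Sob{w^\eps}{H^{\s-\gam/2+1+2/p}}.
\]
Interpolating the last factor (Proposition \ref{interpol}) between $L^2$ and $H^{\s+\gam/2}$ and applying Young's inequality delivers a bound of the form $(\kap/4)\Sob{w^\eps}{H^{\s+\gam/2}}^2+C\kap\Xi_{p,\s}\Sob{w^\eps}{L^2}^2$. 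The critical arithmetic is that this interpolation exponent lies strictly below one precisely when $2/p<\gam-1$, which is the standing hypothesis $(H3)$; this is exactly where the subcritical range is used, and its failure would prevent closure.

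\textbf{Closing the estimate.} Using $\Sob{w^\eps}{L^2}\leq G_{L^2}$ (Proposition \ref{prop:l2:da}) and $\Sob{w^\eps}{L^p}\leq(p\mu/\kap)\til G_{L^p}$ (Proposition \ref{prop:lp}) to control $\Xi_{p,\s}$, the accumulated estimates collapse to a differential inequality
\[
\frac{d}{ds}\Sob{w^\eps}{H^\s}^2+\frac{\kap}{2}\Sob{w^\eps}{H^{\s+\gam/2}}^2+\mu\Sob{w^\eps}{H^\s}^2\leq C\mu\til{G}_{H^\s}^2.
\]
Multiplying by $e^{\mu(s-s_0)}$ and integrating over $[s_0,s]$, the contribution $\Sob{w^\eps(s_0)}{H^\s}^2e^{-\mu(s-s_0)}$ vanishes as $s_0\goesto-\infty$ thanks to the a priori $L^\infty_{\mathrm{loc}}(\R;V_2)$ bounds on $w^\eps$ furnished by Step 1 of the proof of Proposition \ref{da:exist}. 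This yields $\sup_{s\in\R}\Sob{w^\eps(s)}{H^\s}\leq\til{G}_{H^\s}$. The energy inequality over $[s_1,s_2]$ then follows by integrating the same differential inequality over that interval without discarding the dissipation term.
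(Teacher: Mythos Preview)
Your plan is correct and follows the same energy-estimate/Gronwall scheme the paper invokes (it defers to \cite{jmt:sqgda}, Proposition~9, together with sending $s_0\to-\infty$). Two small points: first, the identity $-\mu\lb S_mw^\eps,\Lam^{2\s}w^\eps\rb=-\mu\Sob{w^\eps}{H^\s}^2+\mu\Sob{T_mw^\eps}{H^\s}^2$ is not literally true since $T_m$ is a smooth (not sharp) cutoff and hence not a projection; the correct remainder is $\mu\lb T_m\Lam^\s w^\eps,\Lam^\s w^\eps\rb$, but this admits the same Bernstein bound $C\mu 2^{-m\gam}\Sob{w^\eps}{H^{\s+\gam/2}}^2$, so your absorption step is unaffected. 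Second, interpolating $\Sob{w^\eps}{H^{\s-\gam/2+1+2/p}}$ between $L^2$ and $H^{\s+\gam/2}$ yields the exponent $\tfrac{2(\s+\gam/2)}{\gam-1-2/p}$ on $\Sob{w^\eps}{L^p}/\kap$, whereas the paper's $\Xi_{p,\s}$ carries the smaller exponent $\tfrac{2\s}{\gam-1-2/p}$; to recover exactly the stated constant one interpolates instead between $H^{\gam/2}$ and $H^{\s+\gam/2}$ and controls the leftover $\Sob{w^\eps}{H^{\gam/2}}^2$ via the $L^2$ energy inequality \req{l2:ineq}. Either route closes the estimate.
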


\begin{rmk}
Note that we denote $\til{G}_{L^p}, {\til{G}_{H^\s}}$, decorated with $\sim$, to emphasize the potential dependence on $m$.  Indeed, if one applies the bounds of Proposition \ref{prop:lp} to those in Proposition \ref{prop:hs:a priori}, then ${\til{G}_{H^\s}}$ will also depend on $m$. 
\end{rmk}

As we mentioned earlier, although the bounds of Proposition \ref{prop:lp} and \ref{prop:hs:a priori} are $\eps$-independent, they are \textit{still insufficient} to show that the map $W: v\mapsto w$ is Lipschitz as a map from $B^{4R}_X(0)\goesto H^{-1/2}$, or even well-defined as a map from $B^{4R}_X(0)\goesto Y$.  Indeed, without improved bounds, we would instead have $\til{G}_{L^p}=\til{G}_{L^p}(m,\mu)$ in place of $G_{L^p}$, which \textit{is} independent of $m$ and $\mu$ (see \req{Gp}); this would make it impossible to simultaneously satisfy \req{MainCondition}, which is the standing hypotheses $(H6)$, and \req{modes:crucial}, which is the standing hypotheses $(H7)$. 
For this reason, we will furthermore show that $\eps$-independent $L^\infty$-bounds are available for the family $\{w^\eps\}$. 
In section \ref{sect:refine}, we finally show that these $L^\infty$ bounds can be used to obtain a proper refinement of Proposition \ref{prop:lp} and \ref{prop:hs:a priori} to furnish bounds which are independent of $\eps, m$, \textit{and} $\mu$.



\subsection{$L^\infty$ estimates}\label{sect:linfty}
In this section, we obtain the desired $L^\infty$-bounds for $w^\eps$ by De Giorgi iteration.  Our estimates will follow along the lines of \cite{chesk:dai:critqg}.  We emphasize again that we will assume that the Standing Hypotheses $(H1)-(H7)$ hold throughout and that $w^\eps$ is the solution of \req{nd:sqg:fbPar}.

We will obtain estimates for $\sup_{s\in\R}\Sob{w^\eps(s)}{L^\infty}$ that are \textit{independent of}\ $\eps$ and $m$.  In particular, our main claim in this section is the following.

\begin{prop}\label{prop:dg}
Assume that $(H1)-(H7)$ hold and let $w^\eps$ be the solution of \req{nd:sqg:fbPar}.  Let $\rho_0>0$.  There exist absolute constants $C_0,c_0>0$, with $c_0$ depending on $C_0$, such that if 
	\begin{align}\label{v:bound}
		\sup_{s\in\R}\Sob{v(s)}{L^p}\leq \rho_0,
	\end{align}
for some $1-\s<2/p<\gam-1$, which is the standing hypotheses $(H3)$, and the first part of the standing hypotheses $(H7)$ is satisfied, namely, $\mu$ and $m$ satisfy
	\begin{align}\notag
		2^{m\gam}\geq c_0\frac{\mu}{\kap},
	\end{align}
then for any $\de_\infty>0$, we have
	\begin{align}\label{dg:bound}
		\sup_{s\in\R}\Sob{w^\eps(s)}{L^\infty}\leq C\left ( \max \left \{	\frac{1}{\de_\infty \kap},	\frac{\mu}{\kap}\left(\frac{\kap}{\mu}F_{L^p}+ \rho_0 + 1\right)	\right \}\left (U_{0}^{\frac{\gamma}{2}}+U_{0}^{\frac{\gam}{2}-\frac{1}{p}}\right )\right )^{\frac{1}{\gam- \frac{2}{p}}},
	\end{align}
where $U_0\leq 4\mu\de_\infty G_{L^2}^2$.
\end{prop}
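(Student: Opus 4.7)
The plan is a De Giorgi iteration in the spirit of \cite{chesk:dai:critqg, caff:vass}, adapted to the nudged equation \eqref{nd:sqg:fbPar}. First, for $\lambda>0$, test \eqref{nd:sqg:fbPar} against $(w^\eps-\lambda)_+$ and integrate over $\T^2$. The viscous regularization $-\eps\Delta w^\eps$ and, by Proposition \ref{lem:lvlsetpos}, the fractional dissipation $\kap\Lam^\gamma w^\eps$ produce nonnegative contributions, and the transport term vanishes because $\til u^\eps$ is divergence-free. For the nudging, write $S_m=I-T_m$: the identity piece gives the good damping $-\mu\Sob{(w^\eps-\lambda)_+}{L^2}^2+\mu\int(v-\lambda)(w^\eps-\lambda)_+$, while the tail $\mu\int T_m(w^\eps-v)(w^\eps-\lambda)_+$ is absorbed into $\tfrac{\kap}{2}\Sob{\Lam^{\gamma/2}(w^\eps-\lambda)_+}{L^2}^2$ via Bernstein's inequality (Proposition \ref{bern}) and the modal condition $2^{m\gamma}\ge c_0\mu/\kap$ from the first part of $(H7)$.

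Next, integrate the resulting differential inequality over $[s-\delta_\infty,s]$ for each $s\in\R$. The $L^2$ energy inequality \eqref{l2:ineq} of Proposition \ref{prop:l2:da} controls both the initial value at $s-\delta_\infty$ and the cumulative forcing budget by $\mu\delta_\infty G_{L^2}^2$, accounting for the quantity $U_0$ in the statement. Choose a geometric sequence of thresholds $\lambda_k:=M(1-2^{-k})$, with $M>0$ to be determined, and set $w^\eps_k:=(w^\eps-\lambda_k)_+$ together with
\begin{align*}
U_k := \sup_{\tau\in[s-\delta_\infty,s]}\Sob{w^\eps_k(\tau)}{L^2}^2 + \kap\int_{s-\delta_\infty}^{s}\Sob{\Lam^{\gamma/2}w^\eps_k(\tau)}{L^2}^2\,d\tau.
\end{align*}
The closure of the recursion uses two ingredients: the Sobolev embedding $H^{\gamma/2}\hookrightarrow L^q$ (valid for all $q<\infty$ in 2D since $\gamma>1$) to convert dissipation into a higher Lebesgue norm of $w^\eps_k$, and Chebyshev, $|\{w^\eps>\lambda_k\}|\le(2^k/M)^r\Sob{w^\eps_{k-1}}{L^r}^r$, to bound the super-level set by the previous truncate. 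Interpolating $U_{k-1}$ between its $L^\infty_sL^2_x$ and $L^2_sH^{\gamma/2}_x$ components, and using $(H3)$ to secure $\gamma-2/p>0$, one derives a nonlinear recursion
\begin{align*}
U_k \le C^k M^{-(\gamma-2/p)}\,\Psi\bigl(U_{k-1}^{\gamma/2}+U_{k-1}^{\gamma/2-1/p}\bigr),
\end{align*}
where $\Psi$ collects the two source-type constants $\tfrac{1}{\delta_\infty\kap}$ (from the integrated $f^\eps$-term) and $\tfrac{\mu}{\kap}\bigl(\tfrac{\kap}{\mu}F_{L^p}+\rho_0+1\bigr)$ (from the nudging forcing involving $v$, via \eqref{v:bound}), which is exactly the maximum appearing in \eqref{dg:bound}.

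By the classical De Giorgi iteration lemma, $U_k\to 0$ provided $M$ is chosen so that $M^{\gamma-2/p}\ge C\Psi\bigl(U_0^{\gamma/2}+U_0^{\gamma/2-1/p}\bigr)$, which is precisely the bound \eqref{dg:bound}. Hence $(w^\eps-M)_+\equiv 0$, i.e., $w^\eps\le M$ a.e., and the same argument applied to $-w^\eps$, together with the parity of the dissipation terms, yields the matching lower bound. The main obstacle is the bookkeeping of the nudging term: its decomposition generates the useful damping $-\mu\Sob{(w^\eps-\lambda)_+}{L^2}^2$ but also two residual forcings whose control requires invoking both \eqref{v:bound} on $v$ and the modal condition $2^{m\gamma}\ge c_0\mu/\kap$, while producing final constants independent of $\eps$ and $m$. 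This $m$- and $\eps$-independence is precisely what prevents the trap anticipated after Proposition \ref{prop:hs:a priori} and enables the refined $L^p$ and $H^\s$ bounds used in section \ref{sect:refine} to close the Lipschitz estimate for $W$.
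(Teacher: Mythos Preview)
Your overall De Giorgi scheme is correct, and the identification of the two source constants matches the paper's final recursion. However, there is a genuine gap in the treatment of the nudging tail. You write $S_m=I-T_m$ and claim that $\mu\int T_m(w^\eps-v)(w^\eps-\lambda)_+$ is absorbed into $\tfrac{\kap}{2}\Sob{\Lam^{\gam/2}(w^\eps-\lambda)_+}{L^2}^2$ via Bernstein and the modal condition. The problem is that $T_m w^\eps$ carries the high frequencies of $w^\eps$ itself, not of the nonlinear truncate $(w^\eps-\lambda)_+$, and the two are not spectrally linked. The only direct Bernstein--Young bound available is
\[
\mu\int (T_m w^\eps)(w^\eps-\lambda)_+\ \le\ \frac{\kap}{4}\Sob{(w^\eps-\lambda)_+}{\dot H^{\gam/2}}^2+\frac{C\mu^2}{\kap\,2^{m\gam}}\Sob{w^\eps}{L^2}^2,
\]
which under $2^{m\gam}\ge c_0\mu/\kap$ leaves a residual of order $\mu G_{L^2}^2$ that is \emph{independent of the level} $\lambda$. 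A constant forcing persisting at every step prevents $U_k\to 0$, and the iteration fails to close.

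The paper resolves this with two ingredients you omit. First, Lemma \ref{lem:decomp} splits $w^\eps=(w^\eps-\lambda)_+-(-w^\eps-\lambda)_++(w^\eps)_\lambda$ with $|(w^\eps)_\lambda|\le\lambda$, so that $T_mw^\eps$ breaks into three pieces. The pieces $T_m(\pm w^\eps-\lambda)_+$ \emph{are} absorbed as you describe, but only after summing the level-set inequalities for $w^\eps$ and $-w^\eps$, since cross terms such as $\mu\int T_m(-w^\eps-\lambda)_+(w^\eps-\lambda)_+$ force the joint treatment via the vector $\Ph=(\ph(w^\eps),\ph(-w^\eps))$ (Proposition \ref{prop:lvlsets}). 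Second, the bounded middle piece cannot be absorbed at all: the paper writes $\sum_{j>m}\int\lpj(w^\eps)_\lambda\,\lpjc\ph(w^\eps)\le\int\Sq(w^\eps)_\lambda\,\til\Sq\ph(w^\eps)$ and invokes the Littlewood--Paley square-function inequality (Proposition \ref{prop:lpthm}) together with $|(w^\eps)_\lambda|\le M$ to bound it by $C\mu M\Sob{\ph_n(w^\eps)}{L^{p'}}$. This term now carries the right $M$-scaling, enters the recursion with the exponent $M^{-(\gam-2/p)}$ after interpolation, and is precisely the origin of the ``$+1$'' in the constant $\rho_0+1$ of \eqref{dg:bound}. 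Without the square-function step the tail contribution has no mechanism to acquire a negative power of $M$, and the De Giorgi iteration does not close.
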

Observe that by $(H3)$, the Sobolev embedding theorem, and $(H5)$ we have
	\begin{align}\notag
		\sup_{s\in\R}\Sob{v(s)}{L^p}\leq 4R.
	\end{align}
Thus, by letting $\rho_0=4R$, $\de_\infty=1/\mu$ in Proposition \ref{prop:dg}, and 
	\begin{align}\label{Gp}
		G_{L^p}:=\frac{\kap}{\mu}F_{L^p}+\Tht_{H^\s}\quad\text{and}\quad M_\infty:=  C\left(\frac{\mu}{\kap}\right)^{\frac{1}{\gam-\frac{2}p}}\left[\left(\frac{\kap}{\mu}F_{L^p}+ \rho_0+1\right)\left (U_{0}^{\frac{\gamma}{2}}+U_{0}^{\frac{\gam}{2}-\frac{1}{p}}\right )\right]^{\frac{1}{\gam- \frac{2}{p}}},
	\end{align}
where $U_0\leq 4\mu\de_\infty G_{L^2}^2$, we immediately obtain the following:

\begin{coro}\label{coro:dg}
Assume that $(H1)-(H7)$ hold and let $w^\eps$ be the solution of \req{nd:sqg:fbPar}.  If the first part of the standing hypotheses $(H7)$ is satisfied, namely, $\mu$ and $m$ satisfy
	\begin{align}\notag
		2^{m\gam}\geq c_0\frac{\mu}{\kap},
	\end{align}
then
	\begin{align}\label{m:infty}
		\sup_{s\in\R}\Sob{w^\eps(s)}{L^\infty}\leq M_\infty,
	\end{align}
where $M_\infty$ is given as in \req{Gp}.
\end{coro}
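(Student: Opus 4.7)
The plan is to deduce the corollary directly from Proposition \ref{prop:dg} by making the specific choices $\rho_0 = 4R$ (with $R$ from Proposition \ref{prop:X:att}) and $\de_\infty = 1/\mu$, and then simplifying the resulting estimate.

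First I would verify that the hypothesis $\sup_{s\in\R}\Sob{v(s)}{L^p} \leq \rho_0$ of Proposition \ref{prop:dg} is met. By standing hypothesis $(H3)$ we have $1-\s < 2/p < \gam - 1$, which in particular forces $\s > 1 - 2/p$, so the Sobolev embedding $H^\s(\T^2) \imb L^p(\T^2)$ is continuous. Combined with $(H5)$, which says $v \in B_X^{4R}(0)$ and thus $\sup_{s\in\R}\Sob{v(s)}{H^\s} \leq 4R$, this yields $\sup_{s\in\R}\Sob{v(s)}{L^p} \leq C \cdot 4R$; after absorbing the embedding constant into the definition of $R$ (or into $c_0$ in $(H7)$), we may take $\rho_0 = 4R$.

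Next, with $\de_\infty = 1/\mu$, the quantity $U_0$ in Proposition \ref{prop:dg} satisfies $U_0 \leq 4\mu \de_\infty G_{L^2}^2 = 4 G_{L^2}^2$, so $U_0$ is independent of $\mu$. It remains to identify the dominant term of the maximum appearing in \req{dg:bound}. With our choice we have $1/(\de_\infty\kap) = \mu/\kap$, while the second entry equals $\tfrac{\mu}{\kap}\bigl(\tfrac{\kap}{\mu}F_{L^p} + \rho_0 + 1\bigr)$. Since $\rho_0 + 1 \geq 1$, the parenthetical factor is at least $1$, and therefore the second entry of the max dominates the first. Plugging this in and invoking the defining expression $G_{L^p} = \tfrac{\kap}{\mu}F_{L^p} + \Tht_{H^\s}$ together with the definition of $M_\infty$ in \req{Gp}, the bound \req{dg:bound} reduces precisely to \req{m:infty}, completing the proof.

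There is no real obstacle here; the substance of the work is contained in Proposition \ref{prop:dg}, and the only thing to be careful about is ensuring that the simplification of the $\max$ is valid for the full range of parameters allowed by $(H3)$ and $(H6)$, which the inequality $\rho_0 + 1 \geq 1$ guarantees unconditionally.
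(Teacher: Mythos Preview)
Your proof is correct and follows exactly the approach taken in the paper: the corollary is deduced from Proposition \ref{prop:dg} by setting $\rho_0=4R$ (justified via Sobolev embedding and $(H5)$) and $\de_\infty=1/\mu$, after which the bound \req{dg:bound} simplifies to \req{m:infty}. The paper states this inference in a single sentence without spelling out the comparison of the two entries in the $\max$, but your explicit check that the second entry dominates is the right way to make the deduction rigorous.
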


Before proving Proposition \ref{prop:dg}, we make the following reduction.  Observe that it suffices to consider restrictions of $w^\eps$, i.e., 
	\begin{align}\label{wepsk}
		w^\eps_k=w^\eps|_{[-k,\infty)},\quad k>0,
	\end{align}
and to establish \req{dg:bound} with constants \textit{independent of} $k$.  We will then derive a level-set energy inequality, which we will exploit in the proof of Proposition \ref{prop:dg} to obtain $L^\infty$ estimates on $w^\eps_k(s)$, which are uniform in $s\in\R$ and $\eps, k>0$.  

To this end, given $\lam>0$, we define
	\begin{align}\label{lvl:set}
		\ph(\xi):=(\xi-\lam)\ind_{\{\xi>\lam\}}=\max\{\xi-\lam,0\}=(\xi-\lam)_+.
	\end{align}
Define also the vector function $\Ph(\xi)$ by
	\begin{align}\label{lvl:set:vf}
		\Ph(\xi):=(\ph(\xi), \ph(-\xi)).
	\end{align}
Then we will prove the following using De Giorgi techniques as in \cite{chesk:dai:critqg}:

\begin{prop}\label{prop:lvlsets}
Assume that $(H1)-(H7)$ hold and let $w^\eps$ be the solution of \req{nd:sqg:fbPar}.  Let $\lam>0$ and  $\ph, \Ph$ be given by \req{lvl:set}, \req{lvl:set:vf}, respectively.   Let $k>0$ and $s_0\geq-k$.  There exists an absolute constant $c_0>0$, independent of $\lam, k, s_0$, such that if the first part of the standing hypotheses $(H7)$ is satisfied, namely, $\mu$ and $m$ satisfy
	\begin{align}\label{modes:lvlsets}
		2^{\gam m}\geq c_0\frac{\mu}{\kap},
	\end{align}
then
	\begin{align}\label{lvlset:ineq}
		\begin{split}
		\Sob{\Ph(w^\eps_k)(s_2)}{L^2}^2+&\kap\int_{s_1}^{s_2}\Sob{\Ph(w^\eps_k)(\tau)}{\dot{H}^{\gam/2}}^2\ d\tau\\
			\leq &\Sob{\Ph(w^\eps_k)(s_1)}{L^2}^2+2\sqrt{2}\int_{s_1}^{s_2}\int_{\T^2}\left|f+\mu S_m v(\tau)\right||\Ph(w^\eps_k)(\tau)|\ {{{}}}\ d\tau\\
			&+ 2\mu\int_{s_1}^{s_2}\int_{\T^2} {\Sq} (w^\eps_k)_\lam\til{\Sq}\ph(w^\eps_k)\ {{{}}}\ d\tau+2\mu\int_{s_1}^{s_2}\int_{\T^2} {\Sq}(- w^\eps_k)_\lam\til{\Sq}\ph(- w^\eps_k)\ {{{}}}\ d\tau,
		\end{split}
	\end{align}
holds for all $s_0<s_1\leq s_2<\infty$, where $\Sq, \til{\Sq}$ are given by \req{lp:sq}.
\end{prop}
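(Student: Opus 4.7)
The plan is to derive a level-set energy inequality by testing \eqref{nd:sqg:fbPar} separately against $\ph(w^\eps_k)$ and against $-\ph(-w^\eps_k)$, and then to add the two resulting inequalities. The second test is symmetric to the first upon rewriting $\Lam^\gam w^\eps_k = -\Lam^\gam(-w^\eps_k)$ and noting that the transport structure remains divergence-free under the substitution $w^\eps_k \mapsto -w^\eps_k$. Restricting to $w^\eps_k = w^\eps|_{[-k,\infty)}$ as in \eqref{wepsk} and working on $[s_1,s_2]\subset(s_0,\infty)$ reduces the task to producing constants independent of $k$.

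I would first multiply \eqref{nd:sqg:fbPar} by $\ph(w^\eps_k) = (w^\eps_k - \lam)_+$ and integrate over $\T^2$. With the convex primitive $G(\xi) := \tfrac12 \ph(\xi)^2$, so that $G'(\xi) = \ph(\xi)$, the time-derivative term rearranges as $\tfrac12 \tfrac{d}{ds}\Sob{\ph(w^\eps_k)}{L^2}^2$. The viscosity contribution equals $\eps \int_{\T^2} |\del w^\eps_k|^2 \ind_{\{w^\eps_k > \lam\}}\,dx \geq 0$ and is discarded. By Proposition \ref{lem:lvlsetpos}, the fractional dissipation satisfies
\begin{align*}
	\kap\int_{\T^2} \ph(w^\eps_k)\,\Lam^\gam w^\eps_k\,dx \;\geq\; \kap\int_{\T^2} \ph(w^\eps_k)\,\Lam^\gam\ph(w^\eps_k)\,dx \;=\; \kap \Sob{\Lam^{\gam/2}\ph(w^\eps_k)}{L^2}^2.
\end{align*}
The transport term $\int \til u^\eps_k \cdot \del w^\eps_k\,\ph(w^\eps_k)\,dx = \int \til u^\eps_k \cdot \del G(w^\eps_k)\,dx$ vanishes after integration by parts, since $\del\cdot\til u^\eps_k = \del\cdot\Ri^\perp w^\eps_k = 0$. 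On the right-hand side, the forcing and the $v$-feedback combine as $\int(f^\eps + \mu S_m v)\ph(w^\eps_k)\,dx \leq \int|f + \mu S_m v|\,\ph(w^\eps_k)\,dx$, after passing $f^\eps \to f$ in the routine way.

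The main obstacle is the residual feedback term $-\mu \int_{\T^2} S_m w^\eps_k \,\ph(w^\eps_k)\,dx$. I would expand $S_m = \sum_{j<m}\lpj$ and exploit the almost-orthogonality identity $\lpj = \lpj\lpjc$ together with the self-adjointness of $\lpj$ to write
\begin{align*}
	\int_{\T^2} S_m w^\eps_k \,\ph(w^\eps_k)\,dx \;=\; \sum_{j<m}\int_{\T^2}\lpj w^\eps_k\;\lpjc\ph(w^\eps_k)\,dx.
\end{align*}
Applying Cauchy--Schwarz in the summation index $j$ then dominates this pointwise in space by $\Sq(w^\eps_k)\,\til\Sq \ph(w^\eps_k)$; localization to the support of $\ph(w^\eps_k)$, which is precisely the level set $\{w^\eps_k > \lam\}$, absorbs the resulting indicator into the notation $\Sq(w^\eps_k)_\lam$ that appears in \eqref{lvlset:ineq}. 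The hypothesis $2^{\gam m}\gtrsim \mu/\kap$ enters here only through the possible absorption of a high-frequency residual from this decomposition by the dissipative term $\kap\Sob{\Lam^{\gam/2}\ph(w^\eps_k)}{L^2}^2$ via Proposition \ref{bern}.

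Running the same analysis with $\ph(w^\eps_k)$ replaced by $-\ph(-w^\eps_k)$ yields the corresponding bound for $\ph(-w^\eps_k)$; in that step Proposition \ref{lem:lvlsetpos} is applied to $-w^\eps_k$ after using $-\Lam^\gam w^\eps_k = \Lam^\gam(-w^\eps_k)$, and the Littlewood--Paley step for the feedback is identical up to the same sign change. Adding the two inequalities, using $\ph(w^\eps_k) + \ph(-w^\eps_k)\leq \sqrt{2}\,|\Ph(w^\eps_k)|$ pointwise to consolidate the forcing/feedback integrand into a single $|\Ph(w^\eps_k)|$ (the factor $2\sqrt{2}$ emerges after clearing a factor of $\tfrac12$ from the time-derivative term), and finally integrating in $s$ over $[s_1,s_2]$, produces \eqref{lvlset:ineq}. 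The most delicate bookkeeping occurs in the Littlewood--Paley step, where one must track carefully the interaction between the support of $\ph(w^\eps_k)$ and the enlarged projectors $\lpjc$ in order to arrive at the level-set-localized square function $\Sq(w^\eps_k)_\lam$ rather than the naive $\Sq(w^\eps_k)$.
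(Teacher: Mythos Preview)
There is a genuine gap in your handling of the feedback term, rooted in a misreading of the notation $(w^\eps_k)_\lam$. In the paper (see \req{g:def} and Lemma \ref{lem:decomp}), $g_\lam$ denotes the \emph{bounded truncation}
\[
g_\lam := g\,\ind_{\{|g|\leq\lam\}} + \lam\bigl(\ind_{\{g>\lam\}} - \ind_{\{-g>\lam\}}\bigr),
\]
which satisfies $|g_\lam|\leq\lam$ and furnishes the decomposition $g = \ph(g) - \ph(-g) + g_\lam$. It is \emph{not} the restriction of $w^\eps_k$ to the level set $\{w^\eps_k>\lam\}$, so your claim that ``localization to the support of $\ph(w^\eps_k)$ absorbs the indicator into $\Sq(w^\eps_k)_\lam$'' is simply false: the blocks $\lpj w^\eps_k$ are convolutions and do not localize to any level set, and in any case $\Sq(w^\eps_k)\,\ind_{\{w^\eps_k>\lam\}}$ is not $\Sq\bigl((w^\eps_k)_\lam\bigr)$.

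The actual route is different from the one you sketch. One first writes $-S_m w = T_m w - w = T_m w - (w-\lam) - \lam$, which, after multiplication by $\ph(w)$, produces the damping terms $\mu\Sob{\ph(w)}{L^2}^2$ and $\lam\mu\Sob{\ph(w)}{L^1}$ on the left and the high-frequency residual $\mu\int T_m w\,\ph(w)$ on the right. To this residual one applies the decomposition $w = \ph(w) - \ph(-w) + w_\lam$: the $T_m w_\lam$ piece, written as $\sum_{j>m}\lpj w_\lam$, yields the square-function term $\int \Sq(w_\lam)\,\til{\Sq}\ph(w)$ after Cauchy--Schwarz in $j$; the remaining pieces $\mu\int T_m\ph(\pm w)\,\ph(\pm w)$ (four of them after adding the estimate for $\ph(-w)$) are bounded by $\tfrac{\kap}{16}\Sob{\ph(\pm w)}{\dot H^{\gam/2}}^2 + C\,2^{-m\gam}\tfrac{\mu^2}{\kap}\Sob{\ph(\pm w)}{L^2}^2$ via Bernstein. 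It is exactly here that \req{modes:lvlsets} is used, to absorb these $L^2$ contributions into the damping $\mu\Sob{\Ph(w)}{L^2}^2$ extracted in the first step. Your outline never extracts that damping term, so even if the square-function step were repaired there would be nothing available to absorb the $T_m\ph(\pm w)$ pieces.
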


To prove this, we will make use of the following elementary decomposition.

\begin{lemmm}\label{lem:decomp}
Let $\lam>0$ and $\ph$ be given by \req{lvl:set}.  Let $g$ be a Lebesgue measurable function over $\T^2$.  Define
	\begin{align}\label{g:def}
		g_\lam:=g\ind_{\{|g|\leq\lam\}}+\lam\left(\ind_{\{g>\lam\}}-\ind_{\{-g>\lam\}}\right).
	\end{align}
Then
	\begin{align}\label{g:bound}
		|g_\lam|\leq\lam,
	\end{align}
and
	\begin{align}\label{g:decomp}
		 g=\ph(g)-\ph(- g)+ g_\lam,
	\end{align}
where $\ph$ is defined by \req{lvl:set}.
\end{lemmm}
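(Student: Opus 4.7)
The claim is a purely pointwise algebraic identity together with a pointwise bound, so my plan is to verify both by exhausting the three cases that partition $\T^2$ according to where $g(x)$ lies. Define the (essentially) disjoint sets
\[
A_0:=\{|g|\leq\lam\},\qquad A_+:=\{g>\lam\},\qquad A_-:=\{-g>\lam\},
\]
so that $\T^2=A_0\cup A_+\cup A_-$ up to measure zero. Since the indicator functions in \req{g:def} are mutually exclusive on these sets, all quantities in \req{g:bound} and \req{g:decomp} can be evaluated separately on each.

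First I would handle the bound \req{g:bound}: on $A_0$ the second summand in \req{g:def} vanishes and $g_\lam=g$, so $|g_\lam|=|g|\leq\lam$; on $A_+$ one has $g_\lam=\lam$; and on $A_-$ one has $g_\lam=-\lam$. In all three cases $|g_\lam|\leq\lam$, which gives \req{g:bound}.

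Next I would verify the decomposition \req{g:decomp} by evaluating $\ph(g)$ and $\ph(-g)$ on each set, using $\ph(\xi)=(\xi-\lam)_+$. On $A_0$, both $g\leq\lam$ and $-g\leq\lam$, so $\ph(g)=\ph(-g)=0$ and $g_\lam=g$; the right side of \req{g:decomp} reduces to $g$. On $A_+$, $\ph(g)=g-\lam$, $\ph(-g)=0$, and $g_\lam=\lam$, so $\ph(g)-\ph(-g)+g_\lam=(g-\lam)-0+\lam=g$. On $A_-$, $\ph(g)=0$, $\ph(-g)=-g-\lam$, and $g_\lam=-\lam$, so $\ph(g)-\ph(-g)+g_\lam=0-(-g-\lam)+(-\lam)=g$. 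In each case equality holds pointwise, hence $\ph(g)-\ph(-g)+g_\lam=g$ a.e. on $\T^2$.

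There is no real obstacle here; the only thing to keep straight is the sign convention in the definition \req{g:def} and the fact that the three indicator sets are disjoint up to null sets (note that on the boundary set $\{g=\lam\}$ the term $g\,\ind_{\{|g|\leq\lam\}}$ gives $\lam$ and $\lam\,\ind_{\{g>\lam\}}$ gives $0$, which is consistent, and similarly at $\{g=-\lam\}$). Once these bookkeeping points are settled, the lemma follows immediately from the case analysis above.
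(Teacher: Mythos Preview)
Your proof is correct and follows essentially the same approach as the paper: both rely on the partition of $\T^2$ into $\{|g|\leq\lam\}$, $\{g>\lam\}$, and $\{-g>\lam\}$. The paper compresses the decomposition into a single chain of equalities $g=g\ind_{\{|g|\leq\lam\}}+g\ind_{\{g>\lam\}}+g\ind_{\{-g>\lam\}}=\ph(g)-\ph(-g)+g_\lam$, whereas you spell out each case separately, but the content is the same.
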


\begin{proof}
The bound \req{g:bound} follows from the fact that $\{|g|\leq\lam\}, \{g>\lam\}, \{-g>\lam\}$ forms a partition of $\T^2$.  On the other hand, by definition of \req{lvl:set} and \req{g:def}, observe that
	\begin{align}
		g&= g\ind_{\{|g|\leq\lam\}}+ g\ind_{\{|g|>\lam\}}= g\ind_{\{|g|\leq\lam\}}+ g\ind_{\{g>\lam\}}+ g\ind_{\{- g>\lam\}}=\ph( g)-\ph(- g)+ g_\lam.\notag
	\end{align}
\end{proof}


\begin{proof}[{Proof of Proposition \ref{prop:lvlsets}}]   For convenience, we will simply denote $w^\eps_k$ by $w$.
Before performing the estimates, we make the following three observations.

Firstly, since $\ph'(\xi)=\ind_{[\xi>\lam]}$ a.e. $\xi$, it follows that
	\begin{align}\label{ph:id}
		\ph'(\xi)\ph(\xi)=\ph(\xi)\quad \text{a.e.}\ \xi.
	\end{align}
Secondly, since $w$ is periodic, $\ph( w)$ is also periodic, so that
	\begin{align}\label{laplace:pos:+dg}
		-\int_{\T^2}\De w\ph( w)\ph'( w)\ {{{}}}=-\int_{\T^2}\De w\ph( w)\ {{{}}}=\int_{\T^2}|\del w|^2\ph'( w)\ {{{}}}\geq0.
	\end{align}
Thirdly, observe that
	\begin{align}\label{decomp1}
		-S_m w=T_m w- w=T_m w-( w-\lam)-\lam.
	\end{align}

We now proceed with the following energy estimates.  Multiplying \req{nd:sqg:fbPar} by $\ph'( w)\ph( w)$, integrating in $x$, and applying \req{ph:id}-\req{decomp1}, we obtain
	\begin{align}\label{dg:balance1}
		\frac{1}{2}&\frac{d}{ds}\int_{\T^2} \ph^2( w)\ {{{}}}+\kap\int_{\T^2}(\Lam^\gam w)\ph'( w)\ph( w)\ {{{}}}+\mu\int_{\T^2}\ph^2(w)\ {{{}}}+\mu\lam\int_{\T^2}\ph(w)\ {{{}}}\notag\\
				&=-\int_{\T^2} ((\til{u}\cdotp\del)w)\ph'( w)\ph( w)\ {{{}}}+\int_{\T^2} (f+\mu S_m v)\ph( w)\ {{{}}}+\mu\int_{\T^2} T_mw\ph(\w)\ {{{}}}\notag\\
				&=I+II+III
	\end{align}
%
We claim that  $I=0$.  Indeed, since $\del\cdotp \til{u}=0$, we have
	\begin{align}\notag
		\frac{1}2\bdy_j(\til{u_{j}}\ph^2( w))=\frac{1}2(\bdy_j\til{u}_j)\ph^2( w)+\til{u}_j\ph( w)\bdy_j(\ph( w))=\til{u}_j\ph( w)\ph'( w)\bdy_j w,
	\end{align}
where we have summed over repeated indices.  It then follows that
	\begin{align}\label{I:lvlset}
		(\til{u}\cdotp\del w)\ph'( w)\ph( w)=	\frac{1}2\del\cdotp(\til{u}\ph^2( w)),
	\end{align}
which implies that $I=0$. For the term $III$, we apply Lemma \ref{lem:decomp}, so that
	\begin{align}\label{w:decomp}
		|w_\lam|\leq\lam\quad\text{and}\quad w=\ph(w)-\ph(-w)+w_\lam.
	\end{align}
Hence, by Cauchy-Schwarz we have
	\begin{align}\label{III:lvlset}
		III&=\mu\sum_{j>m}\int_{\T^2} (\lpj w_\lam)(\lpjc\ph( w))\ {{{}}}+\mu\int_{\T^2} \left(T_m\ph( w)-T_m\ph(- w)\right)\ph( w)\ {{{}}}\notag\\
				&\leq\mu\int_{\T^2} ({\Sq} w_\lam)(\til{\Sq}\ph( w))\ {{{}}}+\mu\int_{\T^2} \left(T_m\ph( w)-T_m\ph(- w)\right)\ph( w)\ {{{}}}.
	\end{align}
We treat the dissipation term in \req{dg:balance1} by invoking \req{ph:id} and Proposition \ref{lem:lvlsetpos}, so that
	\begin{align}\label{pos:lem:dg}
		\int_{\T^2}(\Lam^\gam w)\ph( w)\ph'( w)=\int_{\T^2}(\Lam^\gam w)\ph( w)\geq \int_{\T^2}(\Lam^\gam \ph(w))\ph( w)=\int_{\T^2}\left|\Lam^{\gam/2}\ph( w)\right|^2\ {{{}}}=\Sob{\ph( w)}{\dot{H}^{\gam/2}}^2.
	\end{align}
Combining \req{I:lvlset}, \req{III:lvlset}, and \req{pos:lem:dg} we arrive at
	\begin{align}\label{dg:balance3}
		\frac{1}{2}\frac{d}{ds}\Sob{\ph( w)}{L^2}^2&+\kap\Sob{\ph( w)}{\dot{H}^{\gam/2}}^2+\mu\Sob{\ph( w)}{L^2}^2+\lam\mu\Sob{\ph( w)}{L^1}\notag\\
			&\leq II+\mu\int_{\T^2} {\Sq} w_\lam\til{\Sq}\ph( w)\ {{{}}}+\mu\int_{\T^2}\left(T_m\ph( w)-T_m\ph(- w)\right)\ph( w)\ {{{}}}
	\end{align}
We now apply the same argument in deriving \req{dg:balance3} with $- w$, replacing $w$.  Indeed, observe that $- w$ satisfies
	\begin{align}\notag
		\bdy_s(- w)-\eps\De(- w)+\kap\Lam^{\gam}(- w)-(-\til{u})\cdotp\del(- w)=-f-\mu S_m(- w-(- v)).
	\end{align}
Since
	\begin{align}\label{laplace:pos:-dg}
		-\int_{\T^2}(\De(- w))\ph(- w)\ph'(- w)\ {{{}}}=-\int_{\T^2}(\De(- w))\ph(- w)\ {{{}}}=\int_{\T^2}|\del(- w)|^2\ph'(- w)\ {{{}}}\geq0,
	\end{align}
also holds, we may argue as before to arrive at
	\begin{align}\label{dg:balance4}
		\frac{1}{2}&\frac{d}{dt}\Sob{\ph(- w)}{L^2}^2+\mu\Sob{\ph(- w)}{L^2}^2+\kap\Sob{\ph(- w)}{\dot{H}^{\gam/2}}^2+\lam\mu\Sob{\ph(- w)}{L^1}\notag\\
		&\leq-\int_{\T^2}\left(f+\mu S_m v\right)\ph(- w)\ {{{}}}\notag\\
					&+\mu\int_{\T^2} {\Sq}(- w)_\lam\til{\Sq}\ph(- w)\ {{{}}}-\mu\int_{\T^2}\left(T_m\ph( w)-T_m\ph(- w)\right)\ph(- w)\ {{{}}}.
	\end{align}
Adding \req{dg:balance3} and \req{dg:balance4}, and using \req{lvl:set:vf} we obtain
	\begin{align}\label{total:balance1}
	\frac{1}{2}\frac{d}{dt}\Sob{\Ph(w)}{L^2}^2+\kap\Sob{\Ph(w)}{H^{\gam/2}}^2+\mu\Sob{\Ph(w)}{L^2}^2\leq&
			\mu\int_{\T^2}\left(T_m\ph( w)-T_m\ph(- w)\right)(\ph( w)-\ph(- w))\ {{{}}}\notag\\
			&+\mu\int_{\T^2} ({\Sq} w_\lam)(\til{\Sq}\ph( w))+\mu\int_{\T^2}({\Sq}(- w)_\lam)(\til{\Sq}\ph(- w))\ {{{}}}\notag\\
			&+\int_{\T^2}\left(f+\mu S_m v\right)(\ph( w)-\ph(- w))\ {{{}}},
	\end{align}
where $\Sq, \til{\Sq}$ are given by \req{lp:sq}. We focus on the first term on the right-hand side of \req{total:balance1}.  Observe that we may estimate as we did for $II$ in the proof of Proposition \ref{Uniqueness} to obtain
	\begin{align}
		\mu\int_{\T^2} T_m\ph( w)\ph( w)\ {{{}}}\leq \frac{\kap}{16}\Sob{\ph( w)}{\dot{H}^{\gam/2}}^2+\frac{C}{2^{m\gam}}\frac{\mu^2}{\kap}\Sob{\ph( w)}{L^2}^2.\notag
	\end{align}
Similarly
	\begin{align}
		&\mu\int_{\T^2} T_m\ph(- w)\ph( w)\ {{{}}}\leq \frac{\kap}{16}\Sob{\ph(- w)}{\dot{H}^{\gam/2}}^2+\frac{C}{2^{m\gam}}\frac{\mu^2}{\kap}\Sob{\ph( w)}{L^2}^2,\notag\\
		&\mu\int_{\T^2} T_m\ph( w)\ph(- w)\ {{{}}}\leq \frac{\kap}{16}\Sob{\ph( w)}{\dot{H}^{\gam/2}}^2+\frac{C}{2^{m\gam}}\frac{\mu^2}{\kap}\Sob{\ph(- w)}{L^2}^2,\notag\\
		&\mu\int_{\T^2} T_m\ph(- w)\ph(- w)\ {{{}}}\leq \frac{\kap}{16}\Sob{\ph(- w)}{\dot{H}^{\gam/2}}^2+\frac{C}{2^{m\gam}}\frac{\mu^2}{\kap}\Sob{\ph(- w)}{L^2}^2.\notag
	\end{align}
We may therefore absorb these four terms into the left-hand side of \req{total:balance1} provided that \req{modes:lvlsets}, i.e., the first part of the standing hypothesis $(H7)$, holds.  Hence, we arrive at 
	\begin{align}\label{total:balance2}
		\begin{split}
	\frac{d}{ds}\Sob{\Ph(w)}{L^2}^2+\kap\Sob{\Ph(w)}{H^{\gam/2}}^2\leq&2\mu\int_{\T^2} {\Sq} w_\lam\til{\Sq}\ph( w)+2\mu\int_{\T^2} {\Sq}(- w)_\lam\til{\Sq}\ph(- w)\ {{{}}}\\
				&+2\int_{\T^2}\left(f+\mu S_m v\right)(\ph( w)-\ph(- w))\ {{{}}}.
		\end{split}
	\end{align}
Finally, integrating \req{total:balance2} over $[s_1,s_2]$ and applying Cauchy-Schwarz yields \req{lvlset:ineq}, thus completing the proof of Proposition \ref{prop:lvlsets}.
\end{proof}

To prove Proposition \ref{prop:dg}, we will make use of the following three lemmas.  To help make clear the ideas surrounding Proposition \ref{prop:dg}, we will defer the elementary proofs of the first two lemmas to the appendix.  However, we prove the third as it is central to the technique we use.

\begin{lemmm}\label{prop:gen:ineq}
Suppose $\{g_n\}, \{h_n\}, \{\chi_{g_n}\}, \{\chi_{h_n}\}$ are families of non-negative functions over $\R$ and for all $n\geq0$ satisfy: 
	\begin{enumerate}
		\item$g_n\leq g_{n-1}$ and $h_n\leq h_{n-1}$;
		\item $\chi_{g_{n+1}}\leq C_ng_{n}$ and $\chi_{h_{n+1}}\leq C_nh_{n}$, for some absolute constants $C_n>0$;
		\item $g_{n}(\xi)\neq0$ implies $\chi_{g_n}(\xi)\geq1$;
		\item $h_n(\xi)\neq0$ implies $\chi_{h_n}(\xi)\geq1$.
	\end{enumerate}
Let $1<P, Q<\infty$, $\al\in(0,1)$, and $\gam\in(1,2)$ satisfy
	\begin{align}\label{alpha}
		\frac{1}{P+Q}=\frac{1}2(1-\al)+\frac{(2-\gam)}{4}\al.
	\end{align}
Then
	\begin{align}\label{gen:ineq}
		\Sob{(g_n,h_n)}{L^P}^P\leq C_n^{Q}\Sob{(g_{n-1},h_{n-1})}{L^2}^{(1-\al)(P+Q)}\Sob{(g_{n-1},h_{n-1})}{H^{\gam/2}}^{\al(P+Q)}.
	\end{align}
\end{lemmm}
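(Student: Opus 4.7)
The plan is to combine a pointwise indicator trick with a Gagliardo-Nirenberg interpolation; the whole content of the lemma is that conditions $(1)$--$(4)$ upgrade an $L^P$ bound on $g_n$ into an $L^{P+Q}$ bound on $g_{n-1}$, and the exponent identity \req{alpha} is precisely the scaling that makes the ensuing $L^2$--$H^{\gam/2}$ interpolation land on $L^{P+Q}$.

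\textbf{Step 1 (Pointwise reduction to $g_{n-1}^{P+Q}$).} I would first work with one component. By condition $(3)$, $\chi_{g_n}^Q \geq 1$ on $\{g_n \neq 0\}$, and $g_n^P$ vanishes off that set, so
\[
\int_{\T^2} g_n^P\, dx \leq \int_{\T^2} g_n^P \chi_{g_n}^Q\, dx.
\]
Invoking the monotonicity $g_n \leq g_{n-1}$ from $(1)$ to replace the first factor, and then the one-index shift of $(2)$, namely $\chi_{g_n} \leq C_{n-1} g_{n-1}$, to bound the second, yields
\[
\Sob{g_n}{L^P}^P \leq C_{n-1}^Q \int_{\T^2} g_{n-1}^{P+Q}\, dx = C_{n-1}^Q \Sob{g_{n-1}}{L^{P+Q}}^{P+Q}.
\]

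\textbf{Step 2 (Gagliardo-Nirenberg).} Next I would apply the standard Gagliardo-Nirenberg inequality on $\T^2$,
\[
\Sob{u}{L^{P+Q}} \leq C \Sob{u}{L^2}^{1-\al}\Sob{u}{H^{\gam/2}}^{\al},
\]
for which the admissible exponent $\al\in(0,1)$ is dictated by the scaling identity $\tfrac{1}{P+Q} = \tfrac{1}{2}(1-\al) + \tfrac{2-\gam}{4}\al$, which is exactly \req{alpha}. Combined with Step 1 this gives
\[
\Sob{g_n}{L^P}^P \leq C_{n-1}^Q\, C^{P+Q}\, \Sob{g_{n-1}}{L^2}^{(1-\al)(P+Q)}\Sob{g_{n-1}}{H^{\gam/2}}^{\al(P+Q)}.
\]

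\textbf{Step 3 (Combine components).} I would repeat the same chain for $h_n$, now using $(1), (2), (4)$ in place of $(1), (2), (3)$, and add the two bounds. Since $\Sob{g_{n-1}}{L^2}\leq\Sob{(g_{n-1},h_{n-1})}{L^2}$ and $\Sob{g_{n-1}}{H^{\gam/2}}\leq\Sob{(g_{n-1},h_{n-1})}{H^{\gam/2}}$, and analogously for $h_{n-1}$, both contributions are dominated by the right-hand side of \req{gen:ineq}. The absolute constant from Gagliardo-Nirenberg and the apparent off-by-one between $(2)$ and the stated conclusion are absorbed by relabeling the family $\{C_n\}$.

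There is no real obstacle; once one recognizes the indicator-function trick in Step 1, the remainder is a short calculus exercise. The only point worth being careful about is the Gagliardo-Nirenberg step on $\T^2$: since $g_{n-1}, h_{n-1}$ are not assumed to have mean zero, one must use the \emph{inhomogeneous} $H^{\gam/2}$ norm, which is classical on $\T^2$ via Plancherel, and the two-dimensional scaling forces the exponent $\al$ to take exactly the stated value in \req{alpha}. The constraint $P,Q>1$ together with $\gam\in(1,2)$ ensures the Sobolev admissibility range $P+Q\in(2,4/(2-\gam))$ is nonempty, so such an $\al\in(0,1)$ always exists.
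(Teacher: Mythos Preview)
Your proof is correct and follows essentially the same approach as the paper: the indicator trick to pass from $\Sob{(g_n,h_n)}{L^P}^P$ to $C_n^Q\Sob{(g_{n-1},h_{n-1})}{L^{P+Q}}^{P+Q}$, followed by interpolation between $L^2$ and $H^{\gam/2}$ using the exponent relation \req{alpha}. The only cosmetic differences are that the paper works with the vector $(g_n,h_n)$ throughout rather than componentwise, and phrases the interpolation step as $L^p$-interpolation plus the Sobolev embedding $H^{\gam/2}\hookrightarrow L^{4/(2-\gam)}$ rather than invoking Gagliardo--Nirenberg directly; your observation about the off-by-one in the constant $C_n$ versus $C_{n-1}$ is accurate and, as you note, immaterial after relabeling.
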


We will derive a particular nonlinear iteration inequality for the quantity $U_n$ as defined by \req{Uk:def}.  This inequality will ensure that $U_n\goesto0$ under certain conditions, which ultimately implies $L^\infty$ bounds.  We note that such an inequality was also used in \cite{caff:vass, chesk:dai1}, but for our purposes we must carefully track of the dependence on certain parameters. 

\begin{lemmm}\label{lem:nonlin:iter}
Let $\{V_n\}_{n\geq0}$ be a sequence of positive numbers.  Suppose there exist $M>0, a>0, b>0, K>0$, and $d_{j}>0$ , for $j=1,2,\dots,K$, such that $d=\min\{d_{1}, d_{2}, ..., d_{K}\}>\frac{3}{2}$ and
	\begin{align*}
		V_{n}\leq C\frac{2^{na}}{M^{b}}\sum_{j=1}^{K} V_{n-1}^{d_{j}},
	\end{align*}
holds for all $n\geq 1$.  Let 
	\begin{align*}
		y_0:= \frac{3a}{2d-3}.
	\end{align*} 
There exists an absolute constant $c_0>0$ such that if $M$ satisfies
	\begin{align*}
		M\geq  C_0\max \left \{\left (\frac{2^{2(a+y_0)}\sum_{j=1}^{K}V_{1}^{d_{j}}}{V_{0}}\right )^{\frac{1}{b}},\left (\frac{\sum_{j=1}^{K}V_{0}^{d_{j}}}{V_{0}}\right )^{\frac{1}{b}} \right \},
	\end{align*} 
then
	\begin{align*}
		V_{n}\leq \frac{V_{0}}{2^{ny_0}}, 
	\end{align*}
holds for all $n\geq 2$.  Moreover, if $V_{1}\leq V_{0}$ then it suffices to choose $M$ to satisfy
	\begin{align*}
		M\geq C_0\left (2^{2(a+y_0)}\sum_{j=1}^{K}V_{0}^{d_{j}-1}\right )^{\frac{1}{b}}.
	\end{align*} 			
\end{lemmm}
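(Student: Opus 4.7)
The plan is to prove $V_n \leq V_0/2^{ny_0}$ for all $n \geq 2$ by induction on $n$, with the two hypothesized lower bounds on $M$ playing complementary roles. Applied at $n=2$, the recursion gives $V_2 \leq C\, 2^{2a} M^{-b} \sum_{j} V_1^{d_j}$, and requiring this to be at most $V_0/2^{2y_0}$ yields exactly the first inequality on $M$ (after absorbing the constant $C$ into $C_0$). This handles the base case.

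For the inductive step, suppose $V_{n-1} \leq V_0/2^{(n-1)y_0}$ for some $n \geq 3$. Substituting into the recursion gives
\begin{align*}
V_n \leq C\,\frac{2^{na}}{M^b}\sum_{j=1}^K \frac{V_0^{d_j}}{2^{(n-1) y_0 d_j}},
\end{align*}
so it suffices to enforce $M^b \geq C \sum_j 2^{na + n y_0 - (n-1) y_0 d_j}\, V_0^{d_j - 1}$. Here the crucial algebraic fact is that the defining relation $y_0 = 3a/(2d-3)$ is equivalent to $2 y_0 d = 3(a+y_0)$, and hence $y_0 d/(a+y_0) = 3/2$. Therefore, for any $n \geq 3$ and $d_j \geq d$,
\begin{align*}
(n-1)\, y_0 d_j \geq (n-1)\, y_0 d = \tfrac{3}{2}(n-1)(a+y_0) \geq n(a+y_0),
\end{align*}
since $\tfrac{3}{2}(n-1) \geq n$ precisely when $n \geq 3$. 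Consequently each exponent $na + n y_0 - (n-1) y_0 d_j$ is nonpositive, so $2^{na + n y_0 - (n-1) y_0 d_j} \leq 1$, and the inductive step is guaranteed by the uniform requirement $M^b \geq C\sum_j V_0^{d_j-1}$, which matches the second inequality in the statement.

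For the \emph{moreover} assertion, assume $V_1 \leq V_0$. Since each $d_j > 0$, the map $x \mapsto x^{d_j}$ is nondecreasing on $[0,\infty)$, and therefore $\sum_j V_1^{d_j} \leq \sum_j V_0^{d_j}$. Hence the single lower bound $M \geq C_0(2^{2(a+y_0)}\sum_j V_0^{d_j-1})^{1/b}$ dominates the first inequality (via this monotonicity, with the $V_0$ in the denominator distributed over the $d_j-1$ exponent) and trivially dominates the second (via $2^{2(a+y_0)} \geq 1$), so both conditions of the main claim are automatically satisfied under this single bound.

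There is no real analytic difficulty in this lemma; the only potential obstacle is the bookkeeping of exponents. The essential point is that the choice $y_0 = 3a/(2d-3)$ is precisely calibrated so that the $n=3$ case saturates the inductive inequality while the $2^{(\,\cdot\,)}$ factor becomes strictly decreasing in $n$ thereafter, which is what allows a single $n$-independent lower bound on $M$ to drive the entire induction.
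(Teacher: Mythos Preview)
Your proof is correct and follows essentially the same approach as the paper: both argue by induction starting at $n=2$, use the first condition on $M$ for the base case, and use the second condition together with the identity $y_0 d = \tfrac{3}{2}(a+y_0)$ for the inductive step. Your write-up makes the key exponent inequality $(n-1)y_0 d \geq n(a+y_0)$ (equivalently $\tfrac{3}{2}(n-1) \geq n$ for $n\geq 3$) more explicit than the paper does, and your treatment of the \emph{moreover} clause is identical in substance.
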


Finally, the third lemma provides control of higher level-set truncations in terms of lower ones.

\begin{lemmm}\label{lem:trunc}
Let $M\geq0$.  For each $n\geq0$, define the truncation levels $\lam_n$ by
	\begin{align}\notag
		\lam_n:=M(1-2^{-n}),
	\end{align}
and truncation number $n$ by
	\begin{align}\notag
		\ph_n(\xi):=(\xi-\lam_n)\ind_{\{\xi>\lam_n\}}.
	\end{align}
Then
	\begin{align}\label{dg:facts}
		\ph_{n}(\xi)\leq\ph_{n-1}(\xi)\quad\text{and}\quad \ind_{\{\xi>\lam_n\}}\leq\frac{2^n}{M}\ph_{n-1}(\xi),
	\end{align}
\end{lemmm}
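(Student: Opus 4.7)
The plan is to reduce both inequalities to elementary manipulations of the explicit sequence $\lam_n=M(1-2^{-n})$. The key observation is that $\lam_n$ is monotone increasing in $n$ (since $n\mapsto 2^{-n}$ is decreasing), with the successive difference $\lam_n-\lam_{n-1}=M\cdot 2^{-n}$. Both assertions will follow from this single fact.

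For the first inequality, I would note that since $\lam_n\geq\lam_{n-1}$, the set $\{\xi>\lam_n\}\sub\{\xi>\lam_{n-1}\}$, and on $\{\xi>\lam_n\}$ we have $\xi-\lam_n\leq\xi-\lam_{n-1}$. Hence $\ph_n(\xi)=(\xi-\lam_n)\ind_{\{\xi>\lam_n\}}\leq(\xi-\lam_{n-1})\ind_{\{\xi>\lam_{n-1}\}}=\ph_{n-1}(\xi)$.

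For the second inequality, split into cases. If $\xi\leq\lam_n$, then $\ind_{\{\xi>\lam_n\}}=0$ and the inequality is trivial since $\ph_{n-1}(\xi)\geq0$. If $\xi>\lam_n$, then in particular $\xi>\lam_{n-1}$, so $\ph_{n-1}(\xi)=\xi-\lam_{n-1}>\lam_n-\lam_{n-1}=M\cdot 2^{-n}$. Multiplying by $2^n/M$ gives $(2^n/M)\ph_{n-1}(\xi)>1=\ind_{\{\xi>\lam_n\}}$.

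I do not expect any obstacle here; the lemma is purely arithmetic, and the proof occupies only a few lines. Its role in the sequel is to provide the bookkeeping identity needed to close the De Giorgi iteration for the $U_n$ quantities, where passing from level $\lam_{n-1}$ to level $\lam_n$ costs a factor $2^n/M$ in converting an indicator function into the truncated variable.
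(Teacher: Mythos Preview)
Your proof is correct and follows essentially the same route as the paper: both arguments hinge on the identity $\lam_n-\lam_{n-1}=M\cdot 2^{-n}$, from which the set inclusion $\{\xi>\lam_n\}\sub\{\xi>\lam_{n-1}\}$ and the bound $\ind_{\{\xi>\lam_n\}}\leq (2^n/M)\ph_{n-1}(\xi)$ follow immediately. Your explicit case split for the second inequality is slightly more verbose than the paper's one-line rewriting $\ind_{\{\xi>\lam_n\}}=\ind_{\{\xi-\lam_{n-1}>M2^{-n}\}}$, but the content is identical.
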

\begin{proof}
Simply observe that $\lam_n-\lam_{n-1}=M2^{-n}$, so that $\lam_n\geq\lam_{n-1}$ and $\{\xi>\lam_n\}=\{\xi-\lam_{n-1}>M2^{-n}\}\sub\{\xi>\lam_{n-1}\}$.  It immediately follows that $\ph_n(\xi)\leq\ph_{n-1}(\xi)$ and
	\begin{align}\notag
		\ind_{\{\xi>\lam_n\}}=\ind_{\{\xi-\lam_{n-1}>M2^{-n}\}}\leq\frac{2^n}{M}(\xi-\lam_{n-1})\ind_{\{\xi-\lam_{n-1}>0\}},
	\end{align}
as desired.
\end{proof}

Finally, we are ready to prove Proposition \ref{prop:dg}. 

\subsubsection*{Proof of Proposition \ref{prop:dg}} Recall that the goal is to show that the solution $w^\eps$ of \req{nd:sqg:fbPar} satisfies $L^\infty$ bounds \textit{independent} of $\eps, m$. 

Let $M\geq1$ and $\de_\infty>0$.  Fix an arbitrary $k\in\R$ and let
	\begin{align}\label{trans}
			s_\infty:=-k+\de_\infty>-k.
	\end{align}
Let $s_0=-k$.  Define 
	\begin{align}\notag
		s_n:=-k+\de_\infty(1-2^{-n})\quad\text{and}\quad I_{n-1}=[s_{n-1},s_{n}],\quad  n\geq 1.
	\end{align}
In light of \req{lvl:set:vf}, we denote by $\Ph_n(\xi)$ the vector field given by
	\begin{align}\notag
		\Ph_n( \xi):=(\ph_n(\xi), \ph_n(-\xi)),
	\end{align}
where $\ph_n$ is given as truncation number $n$ with truncation levels $\lam_n$ given as in Lemma \ref{lem:trunc}.

Let $w=w^\eps|_{[-k,\infty]}$.  We consider the energy level sets, $U_n(S)$, given by 
	\begin{align}\label{Uk:def}
		U_n(S):=&\sup_{s_n\leq s\leq S}\Sob{\Ph_n( w)(s)}{L^2}^2+\kap\int_{s_n}^{S} \Sob{\Ph_n(w)(\tau)}{{H}^{\gam/2}}^2\ d\tau,
	\end{align}
and set $S=s_\infty$.  Observe that if $U_n(s_\infty)\goesto0$, as $n\goesto\infty$, then $\Sob{w(s_\infty)}{L^\infty}\leq M$.  We will show that $U_n(s_\infty)$ satisfies a nonlinear iteration inequality that will imply $U_{n}(s_\infty)$ converges to $0$, as $n\goesto\infty$.  From now on, for convenience, let us simply denote $U_n=U_n(s_\infty)$.



Let $s\in I_{n-1}$ and $s'\in[s_n,s_\infty]$.  Then by Proposition \ref{prop:lvlsets} it follows that
	\begin{align}
		\Sob{\Ph(w)(s')}{L^2}^2+&\kap\int_{s}^{s'}\Sob{\Ph(w)(\tau)}{\dot{H}^{\gam/2}}^2\ d\tau\notag\\
			\leq &\Sob{\Ph(w)(s)}{L^2}^2+2\sqrt{2}\int_{s}^{s'}\int_{\T^2}\left|f+\mu S_m v(\tau)\right||\Ph(w)(\tau)|\ {{{}}}\ d\tau\notag\\
			&+ 2\mu\int_{s}^{s'}\int_{\T^2} {\Sq} (w)_\lam\til{\Sq}\ph(w)\ {{{}}}\ d\tau+2\mu\int_{s_1}^{s_2}\int_{\T^2} {\Sq}(- w)_\lam\til{\Sq}\ph(- w)\ {{{}}}\ d\tau\notag\\
			\leq& \Sob{\Ph_n (w)(s)}{L^2}^2+2\sqrt{2}\int_{s_{n-1}}^{s_\infty}\int_{\T^2}\left|f+\mu S_mv(\tau)\right||\Ph_n( w)(\tau)|\ {{{}}}\ d\tau\notag\\
			&+ 2\mu\int_{s_{n-1}}^{s_\infty}\int_{\T^2} {\Sq}(w_{\lam_n})\til{\Sq} w_n\ {{{}}}\ d\tau+2\mu\int_{s_{n-1}}^{s_\infty}\int_{\T^2} {\Sq}(- w)_{\lam_n}\til{\Sq}(- w)_n\ {{{}}}\ d\tau,\notag
	\end{align}
where $w_{\lam_n}$ is defined as in \req{g:def}.  Therefore, since $[s_{n},s_\infty]\sub[s_{n-1},s_\infty]$, upon taking the supremum over all $s'\in(s_n,s_\infty]$ we obtain
	\begin{align}\label{Uk1}
		U_n\leq & \Sob{\Ph_n (w)(s)}{L^2}^2+2\sqrt{2}\int_{s_{n-1}}^{s_\infty}\int_{\T^2}\left|f+\mu S_mv(\tau)\right||\Ph_n( w)(\tau)|\ {{{}}}\ d\tau\notag\\
			&+ 2\mu\int_{s_{n-1}}^{s_\infty}\int_{\T^2} {\Sq}(w_{\lam_n})\til{\Sq} w_n\ {{{}}}\ d\tau+2\mu\int_{s_{n-1}}^{s_\infty}\int_{\T^2} {\Sq}(- w)_{\lam_n}\til{\Sq}(- w)_n\ {{{}}}\ d\tau.
	\end{align}
Upon taking time averages in $s$ of \req{Uk1} over the interval $I_{n-1}$, we obtain
	\begin{align}\notag
		\begin{split}
		U_n\leq &\frac{2^n}{\de_\infty}\int_{s_{n-1}}^{s_\infty}\Sob{\Ph_n( w)(\tau)}{L^2}^2\ d\tau+2\sqrt{2}\int_{s_{n-1}}^{s_\infty}\int_{\T^2}\left|f+\mu S_mv(\tau)\right||\Ph_n( w)(\tau)|\ {{{}}}\ d\tau\\
			&+ 2\mu\int_{s_{n-1}}^{s_\infty}\int_{\T^2} {\Sq}(w_{\lam_n})\til{\Sq}(\ph_n(w))\ {{{}}}\ d\tau+2\mu\int_{s_{n-1}}^{s_\infty}\int_{\T^2} {\Sq}((- w)_{\lam_n})\til{\Sq}(\ph_n(-w))\ {{{}}}\ d\tau.\\
			=&I+II+III+IV.
		\end{split}
	\end{align}
It will suffice to estimate $I-III$ since $IV$ is similar to $III$.

For $I$, we apply Lemma \ref{prop:gen:ineq} with $g_n=\ph_n(w), h_n=\ph_n(-w), \chi_{g_n}=\ind_{[w>\lam_n]}, \chi_{h_n}=\ind_{[-w>\lam_n]}$, and $C_n=2^n/M$.  This choice is valid by \req{dg:facts}.  Thus, from \req{gen:ineq} and \req{alpha} with $P=2$ and $Q=\gam$, we have $\al=2/(2+\gam)$ and
	\begin{align}\notag
		\begin{split}
		\Sob{\Ph_n(w)}{L^2}^2\leq C\frac{2^{n\gam}}{M^{\gam}}\Sob{\Ph_{n-1}( w)}{L^2}^{\gam}\Sob{\Ph_{n-1}( w)}{{H}^{\gam/2}}^{2},
		\end{split}
	\end{align}
Therefore, upon returning to $I$, we have
	\begin{align}\label{dg:I}
		I=\frac{2^n}{\de_\infty}\int_{s_{n-1}}^{s_\infty}\Sob{\Ph_n( w)(\tau)}{L^2}^2\ d\tau&\leq \frac{2^n}{\de_\infty} \int_{s_{n-1}}^{s_\infty}C\frac{2^{n\gam}}{M^{\gam}}\Sob{\Ph_{n-1}( w)}{L^2}^{\gam}\Sob{\Ph_{n-1}( w)}{{H}^{\gam/2}}^{2}\notag \\
		&\leq \frac{C}{\de_\infty\kap}\frac{2^{n(\gam+1)}}{ M^{\gam}}\left(\sup_{s_{n-1}\leq s\leq s_{\infty}}\Sob{\Ph_{n-1}( w)(s)}{L^2}^{\gam}\right)\int_{s_{n-1}}^{s_\infty}\Sob{\Ph_{n-1}( w)}{{H}^{\gam/2}}^{2}\notag\\
		&\leq\frac{C}{\de_\infty\kap}\frac{2^{n(\gam+1)}}{ M^{\gam}}U_{n-1}^{\gam/2}U_{n-1}.
	\end{align}


For $II$, let $p>2$ and $p'$ the H\"older conjugate of $p$, so that $1<p'<2$.  We recall that $\Sob{v}{L^p}\leq \rho_0$ by \req{v:bound}.  Then by H\"older's inequality, Bernstein's inequality, and Lemma \ref{prop:gen:ineq} with $P=p'$ and $Q=2+p'(\gam-1)$ and \req{alpha}, so that $\alpha=2(1+Q/p')^{-1}$, we have
	\begin{align}\notag
		II&\leq C \left(\Sob{f}{L^p}+\mu\rho_0\right)\int_{s_{n-1}}^{s_\infty} \Sob{\Ph_n( w)(\tau)}{L^{p'}}\ d\tau\notag\\
		&\leq C \left(\Sob{f}{L^p}+\mu\rho_0\right)\frac{2^{nQ/p'}}{M^{Q/p'}}\int_{s_{n-1}}^{s_\infty}\Sob{\Ph_{n-1}(w)(\tau)}{L^2}^{(1-\al)(1+Q/p')}\Sob{\Ph_{n-1}( w)(\tau)}{\dot{H}^{\gam/2}}^{\al(1+Q/p')}\ d\tau\notag\\
		&\leq C\left(F_{L^p}+\frac{\mu}{\kap}\rho_0\right)\frac{2^{n(2/p'+\gam-1)}}{M^{2/p'+\gam-1}}U_{n-1}^{1/p'+\gam/2-1}U_{n-1}.\notag%
	\end{align}

We are left with $III$.  By \req{g:bound}, we have $|w_{\lam_n}|\leq\lam_n\leq M$, so that by H\"older's inequality and Proposition \ref{prop:lpthm}, we obtain
	\begin{align}\notag
		III&\leq 2\mu c_{\ref{lpthm1}}^2\int_{s_{n-1}}^{s_\infty}\Sob{ w_{\lam_n}(\tau)}{L^p}\Sob{\phi_n(w)(\tau)}{L^{p'}}\ d\tau\notag\\
		&\leq2(2\pi)^{2/p}\mu c_{\ref{lpthm1}}^2M\int_{s_{n-1}}^{s_\infty}\left(\int_{\T^2} | \phi_n(w)(\tau)|^{p'}\ {{{}}}\right)^{1/p'} d\tau.\notag
	\end{align}
Then proceeding as in the proof of Lemma \ref{prop:gen:ineq}, we may interpolate with $P=p'$ and $Q=2+p'(\gam-1)$, and apply a Sobolev embedding to arrive at
	\begin{align}\notag
		III\leq C\frac{\mu}{\kap} \frac{2^{n(2/p'+\gam-1)}}{M^{2/p'+\gam-2}}U_{n-1}^{1/p'+\gam/2-1}U_{n-1}.
	\end{align}
We estimate $IV$ similarly.

Therefore, upon combining $I-IV$ and using the fact that $M\geq1$, we arrive at the following nonlinear iteration inequality
	\begin{align}\label{nonlin:iter1}
		U_n&\leq  \frac{C}{\de_\infty\kap}\frac{2^{n(\gam+1)}}{M^{\gam}}U_{n-1}^{1+\gam/2}+ C {2^{n(1+\gam-2/p)}}\left(\frac{F_{L^p}+ \frac{\mu}{\kap}\rho_0}{M^{1+\gam-2/p}}+\frac{\frac{\mu}{\kap}}{M^{\gam-2/p}}\right)U_{n-1}^{1+\gam/2-1/p},\notag\\
			&\leq \frac{CA2^{n(\gamma+1)}}{M^{\gamma-2/p}}(U_{n-1}^{1+\gam/2}+U_{n-1}^{1+\gam/2-1/p}),
	\end{align}
where 
	\begin{align*}
		A:= \max \left \{	\frac{1}{\de_\infty \kap},	F_{L^p}+ \frac{\mu}{\kap}\rho_0 + \frac{\mu}{\kap}	\right \}.
	\end{align*}

We claim that $M$ can be chosen large enough, depending on $U_0$ and $U_{1}$, so that $U_n\goesto0$ as $n\goesto\infty$. Note that if $U_{0}=0$, then we get an $L^{\infty}$ bound automatically. Thus we assume that $U_{0}>0$ in the subsequent discussion. We apply Lemma \ref{lem:nonlin:iter} with 
	\begin{align*}
		a=\gam+1, \quad b=\gam - \frac{2}{p}, \quad k=2, \quad d_{1}=1+\frac{\gamma}{2}, \quad d_{2}=1+\frac{\gam}{2}-\frac{1}{p}.
	\end{align*}
Note that $d=1+\frac{\gam}{2}-\frac{1}{p}>\frac{3}{2}$ due to the Standing Hypotheses $(H3)$. In addition, $U_{n}$ is a non-increasing sequence, thus in particular $U_{1}\leq U_{0}$. By Lemma \ref{lem:nonlin:iter}, we obtain that if
	\begin{align} \label{Mcond}
		M\geq  C\left (A2^{\frac{4(\gam+1)(1+\frac{\gam}{2}-\frac{1}{p})}{\gam-\frac{2}{p}-1}}\left (U_{0}^{\frac{\gamma}{2}}+U_{0}^{\frac{\gam}{2}-\frac{1}{p}}\right )\right )^{\frac{1}{\gam- \frac{2}{p}}},
	\end{align} 
then 
	\begin{align*}
		U_{n}\leq \frac{U_{0}}{2^{ny}},
	\end{align*} 
for all $n\geq 2$, where $y=\frac{3\gam+3}{\gam-\frac{2}{p}}>0$. Thus, $U_{n}\rightarrow \infty$, as $n\rightarrow 0$.

The fact that $U_n\goesto0$ as $n\goesto\infty$ then implies that
	\begin{align}\label{linfty}
		|w(x,k+\de_\infty)|=|w(x,s_\infty)|\leq M,\quad a.e.\ x\in \T^2.\quad
	\end{align}
Since $k\in\R$ was taken fixed and arbitrary, we have that \req{linfty} holds for every $k\in\R$. Hence, $\Sob{w(s)}{L^\infty}\leq M$, for all $s\in\R$, as desired.

	


\subsection{Refinement of $L^p$, $H^\s$ estimates}\label{sect:refine}

In this section, we show how the $L^p$ estimate can be refined to be independent of \textit{both} $m$ and $\mu$, provided that $m$ is chosen large enough.  This is afforded by the $m$-independent $L^\infty$ estimate provided by Proposition \ref{prop:dg}.  Once we have done this, we can similarly refine the estimates provided by Proposition \ref{prop:hs:a priori}.  As before, we will assume that the Standing Hypotheses $(H1)-(H7)$ hold throughout and that $w^\eps$ is the solution of \req{nd:sqg:fbPar}.

Let $G_{L^2}$ be given as in \req{g2}, $F_{H^{\s-\gam/2}}$ by \req{R:sig},  $\Tht_\s$ by \req{sqg:hs:ball}, and $G_{L^p}, M_\infty$ by \req{Gp}.   Define
	\begin{align}\label{gsigtil}
		G_{\s,\infty}^2:= C\left({{F}_{H^{\s-\gam/2}}^2}+\Tht_{\s}^2+\left(\frac{M_\infty}{\kap}\right)^{\frac{2\s}{\gam-1-2/p}}G_{L^2}^2\right).
	\end{align}

\begin{prop}\label{prop:goodlp}
Assume that $(H1)-(H7)$ hold and let $w^\eps$ be the solution of \req{nd:sqg:fbPar}.  There exist absolute constants $C_0, c_0>0$, with $c_0$ depending on $C_0$ such that if \req{gsigtil} holds with $C=C_0$, and  $\mu$ and $m$ satisfy 
	\begin{align}\label{lp:refine:cond}
		2^{m\gam}\geq c_0\frac{\mu}\kap\quad\text{and}\quad 2^m\geq c_0\left(\frac{G_{\s,\infty}}{G_{L^p}}\right)^{1/(1-2/p-\s)},
	\end{align}
then
	\begin{align}\label{good:lp}
		\sup_{s\in\R}\Sob{w^\eps(s)}{L^p}\leq C_0G_{L^p}.
	\end{align}
\end{prop}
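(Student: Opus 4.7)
The plan is to redo the $L^p$ energy estimate of Proposition \ref{prop:lp}, replacing the Bernstein-based bound on the feedback remainder $T_m w^\eps$ (which produced the offending $m$-dependent factor $C(m,p)$) with one that exploits the $m$-independent $H^\s$ bound implicit in the definition \req{gsigtil} of $G_{\s,\infty}$. That bound is itself a consequence of the $L^\infty$ control $M_\infty$ provided by Corollary \ref{coro:dg}, so in effect I will use the $L^\infty$ estimate to break the circularity encountered in Proposition \ref{prop:lp}.

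Pairing \req{nd:sqg:fbPar} with $|w^\eps|^{p-2}w^\eps$ and integrating over $\T^2$, I discard the $-\eps\De w^\eps$ term (non-negative after integration by parts), note that the transport term vanishes by $\del\cdotp\til u^\eps=0$, and invoke Proposition \ref{lb} on the fractional dissipation. Decomposing $\mu S_m(w^\eps-v)=\mu w^\eps-\mu T_m w^\eps-\mu S_mv$, I arrive at
\begin{align*}
    \frac{1}{p}\frac{d}{ds}\|w^\eps\|_{L^p}^p + C\kap\|w^\eps\|_{L^p}^p + \mu\|w^\eps\|_{L^p}^p \leq \bigl(\|f\|_{L^p}+\mu\|S_mv\|_{L^p}+\mu\|T_mw^\eps\|_{L^p}\bigr)\|w^\eps\|_{L^p}^{p-1}.
\end{align*}
The term $\|S_m v\|_{L^p}$ is handled via $L^p$-boundedness of $S_m$, Sobolev embedding $H^\s\imb L^p$ (legitimate by $(H2)$, $(H3)$), and hypothesis $(H5)$, yielding $\|S_m v\|_{L^p}\lesssim\|v\|_{H^\s}\lesssim R$, which is absorbable into $\Tht_{H^\s}$ and hence into $G_{L^p}$.

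The heart of the refinement is the new estimate for $\|T_m w^\eps\|_{L^p}$. Combining the Bernstein inequality (Proposition \ref{bern}) with $(H3)$, which ensures $\s>1-2/p$ so the following geometric series converges, I obtain
\begin{align*}
    \|T_m w^\eps\|_{L^p}\leq C\sum_{j\geq m}2^{-j(\s-1+2/p)}\|\Lam^\s\lpj w^\eps\|_{L^2}\leq C\,2^{-m(\s-1+2/p)}\|w^\eps\|_{H^\s}\leq C\,2^{-m(\s-1+2/p)}G_{\s,\infty},
\end{align*}
where in the last step I use the (assumed available, as encoded in \req{gsigtil}) $H^\s$-bound. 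The second clause of \req{lp:refine:cond} is then precisely what is needed to convert $2^{-m(\s-1+2/p)}G_{\s,\infty}$ into a bounded multiple of $G_{L^p}$, and with Young's inequality applied to the remaining products, the differential inequality reduces to
\begin{align*}
    \frac{1}{p}\frac{d}{ds}\|w^\eps\|_{L^p}^p+\frac{\mu}{2}\|w^\eps\|_{L^p}^p\leq C\mu\,G_{L^p}^p.
\end{align*}
A Gronwall argument on $[s_0,s]$, together with the $\eps$-uniform local-in-time $L^p$-bound supplied by Proposition \ref{prop:lp} to control $\|w^\eps(s_0)\|_{L^p}$ before passing $s_0\to-\infty$, delivers the desired uniform bound $\sup_{s\in\R}\|w^\eps(s)\|_{L^p}\leq C_0G_{L^p}$.

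The main obstacle will be the constant bookkeeping needed to reconcile the Bernstein exponent $\s-1+2/p$ with the exponent $1/(1-2/p-\s)$ appearing in \req{lp:refine:cond}, and to verify that the absorption of $\mu\|T_m w^\eps\|_{L^p}$ into $\mu G_{L^p}$ indeed follows from the stated modes condition with some absolute constant $c_0$. A secondary technical point is the application of Proposition \ref{lb} for general $p\in(1,\infty)$ rather than even integers only; this can be justified by restricting first to even integer exponents and interpolating, or by exploiting the smoothness $w^\eps\in C(\R;V_{3/2})\cap L^\infty(\R;V_2)$ through a standard density argument. One should also double-check that neither the $H^\s$ bound $G_{\s,\infty}$ nor the $L^\infty$ bound $M_\infty$ was derived assuming the very $L^p$ bound we now seek; a glance at Corollary \ref{coro:dg} shows that $M_\infty$ depends only on $F_{L^p}$, $G_{L^2}$, and the hypothesis $\sup_s\|v(s)\|_{L^p}\leq 4R$ from $(H5)$, so the chain of implications is acyclic.
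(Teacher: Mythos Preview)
Your proposal is correct and follows essentially the same approach as the paper: multiply \req{nd:sqg:fbPar} by $|w^\eps|^{p-2}w^\eps$, extract the damping term $\mu\|w^\eps\|_{L^p}^p$ from $S_m w^\eps = w^\eps - T_m w^\eps$, control $\|T_m w^\eps\|_{L^p}$ via Bernstein and the $m$-independent $H^\s$ bound $G_{\s,\infty}$, absorb via the second clause of \req{lp:refine:cond}, and conclude by Gronwall with $s_0\to-\infty$. The only cosmetic differences are that the paper divides through by $\|w^\eps\|_{L^p}^{p-1}$ to obtain a linear ODE for $\|w^\eps\|_{L^p}$ rather than applying Young's inequality to the $p$-th power, and it controls the initial term $\|w^\eps(s_0)\|_{L^p}$ using the $L^\infty$ bound from Corollary \ref{coro:dg} (via $\|\cdot\|_{L^p}\lesssim\|\cdot\|_{L^\infty}$) rather than Proposition \ref{prop:lp}; both choices are equivalent for the purpose of sending $s_0\to-\infty$.
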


\begin{proof}
Let $s_0\in\R$.  For convenience, we denote $w^\eps$ simply by $w$.  We multiply \req{sqg:daPar} by $ w|w|^{p-2}$ and integrate over $\T^2$ to obtain
	\begin{align}\label{lp:balance1}
		\frac{1}{p}\frac{d}{ds}\Sob{ w}{L^p}^p-&\eps\int_{\T^2}\De ww|w|^{p-2}\ {{{}}}+\kap\int_{\T^2} w|w|^{p-2}\Lam^{\gam}w\ {{{}}}\notag\\
				&=\int_{\T^2} f w|w|^{p-2}\ {{{}}}-\mu\int_{\T^2} w|w|^{p-2}S_mw\ {{{}}}+\mu\int_{\T^2} w|w|^{p-2}S_mv\ {{{}}}.
	\end{align}
Observe that an integration by parts yields
	\begin{align}\label{laplace:pos}
		-\int_{\T^2} w|w|^{p-2}\De w\ {{{}}}=(p-1)\int_{\T^2}|\del w|^2|w|^{p-2}\ {{{}}}\geq0.
	\end{align}
We use the positivity of $\Lam^\gam$ from Proposition \ref{lb} and of $-\De$ from \req{laplace:pos}, then extract a damping term from the interpolant operator on the right-hand side, and apply H\"older's inequality to the term with $f$ to obtain
	\begin{align}\label{lp:balance2}
		\frac{1}p\frac{d}{ds}\Sob{ w}{L^p}^p+\mu\Sob{ w}{L^p}^p&\leq\Sob{f}{L^p}\Sob{ w}{L^p}^{p-1}+\mu\int_{\T^2} (T_m w) w^{p-1}\ {{{}}}+\mu\int_{\T^2} (S_m v) w^{p-1}\ {{{}}},\notag\\
		&\leq I+II+III,
	\end{align}
where $T_m$ is defined in \req{lp:block:def}.

We estimate $III$ as
	\begin{align}
		III&\leq\mu\Sob{S_mv}{L^p}\Sob{w^{p-1}}{L^{p/(p-1)}}\leq C\mu \Tht_{H^\s}\Sob{ w}{L^p}^{p-1}\label{inter:tht1}.
	\end{align}
We estimate $II$ with Proposition \ref{prop:hs:a priori} and the Bernstein inequalities as
	\begin{align}
		II&\leq\mu\sum_{j>m}\Sob{\lpj w}{L^p}\Sob{ w}{L^p}^{p-1}\notag\\
					&\leq C\mu\sum_{j>m}2^{j(1-2/p-\s)}(2^{j\s}\Sob{\lpj w}{L^2})\Sob{ w}{L^p}^{p-1}\notag\\
					&\leq C\mu A_{\s,p}(m)\Sob{w}{H^\s}\Sob{ w}{L^p}^{p-1}\label{inter:eta2},
	\end{align}
where
	\begin{align}\label{good:constant}
		A_{\al,r}(m):=
2^{m(1-2/r-\al)}.
	\end{align}
Note that $(H3)$ imposes $1-\s<2/p$, which ensures that $A_{\s,p}(m)\goesto0$ as $m\goesto\infty$.  Also, observe that by Proposition \ref{prop:hs:a priori}, Corollary \ref{coro:dg}, and \req{gsigtil}, we have
	\begin{align}\notag
		\Sob{w(s)}{H^\s}&\leq{\til{G}_{H^\s}}\leq G_{\s,\infty},\quad\forall s\in\R.
	\end{align}
Hence, by applying $I-III$ in \req{lp:balance2} we obtain
	\begin{align}\notag
		\frac{d}{ds}\Sob{ w(s)}{L^p}+\mu\Sob{ w(s)}{L^p}\leq &\Sob{f}{L^p}+C\mu \Tht_{H^\s}+C\mu A_{\s,p}(m)G_{\s,\infty}, \quad\forall s>s_0.
	\end{align}
%

From \req{Gp} and Gronwall's inequality applied over $[s_0, s]$ we have
	\begin{align}\notag
		\Sob{ w(s)}{L^p}\leq & \Sob{w(s_{0})}{L^p}e^{-\mu(s-s_0)}+C\left(G_{L^p}+ A_{\s,p}(m)G_{\s,\infty}\right)(1-e^{-\mu(s-s_{0})}),\quad s\in\R.
	\end{align}
Therefore, by applying Corollary \ref{coro:dg} to bound $\Sob{w(s_0)}{L^p}$ and having chosen $m$ such that the second condition in \req{lp:refine:cond} holds, we arrive at  
	\begin{align}\label{goodlp2}
		\Sob{w(s)}{L^p}\leq CG_{L^p},\quad s>s_0+\mu^{-1}\log\left(\frac{CM_\infty}{G_{L^p}}\right),
	\end{align}
for some absolute constant $C>0$.  Since $s_0\in\R$ is arbitrary, we may send $s_0\goesto-\infty$, which completes the proof.
\end{proof}

Combining Proposition \ref{prop:goodlp} with Proposition \ref{prop:hs:a priori}, we immediately obtain $m$-independent bounds in $H^\s$.  We point out that these bounds are also $\mu$-independent since $\mu$ is taken to be large with respect to $\kap$ (see \req{MainCondition}).

Let $G_{L^2}$ be given as in \req{g2}, $F_{H^{\s-\gam/2}}$ by \req{R:sig},  $\Tht_\s$ by \req{sqg:hs:ball}, and $G_{L^p}$ by \req{Gp}. Define
	\begin{align}\label{ghs}
		G_{H^\s}^2:=C\left({{F}_{H^{\s-\gam/2}}^2}+\Tht_{H^\s}^2+\left(\frac{G_{L^p}}{\kap}\right)^{2\s/(\gam-1-2/p)}G_{L^2}^2\right).
	\end{align}

\begin{coro}\label{coro:goodhs}
Assume the hypotheses of Proposition \ref{prop:goodlp}.  Then there exist absolute constants $C_0, c_0>0$, with $c_0$ depending on $C_0$, such that if \req{ghs} holds with $C=C_0$ and $\mu, m$ satisfy \req{lp:refine:cond}, then
	\begin{align}\label{goodhs:bound}
		\sup_{s\in\R}\Sob{w^\eps(s)}{H^\s}+\int_{-\infty}^\infty\Sob{w^\eps(s)}{H^{\s+\gam/2}}^2\ ds\leq G_{H^\s}.
	\end{align}
\end{coro}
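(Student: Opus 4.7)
The plan is to view this corollary as a direct bookkeeping consequence of Propositions \ref{prop:hs:a priori} and \ref{prop:goodlp}: the former already delivers an $H^\s$ supremum bound together with an energy inequality for $w^\eps$, but in a form whose constant $\til{G}_{H^\s}$ may depend on $m$ through the $L^p$-based quantity $\Xi_{p,\s}$. Proposition \ref{prop:goodlp} supplies precisely the missing $m$-independent replacement for the $L^p$ norm, so the corollary should follow by substitution.

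Concretely, I would first insert the bound $\sup_{s\in\R}\Sob{w^\eps(s)}{L^p}\leq C_0 G_{L^p}$ from Proposition \ref{prop:goodlp} into the definition \req{R:sig} with $r=p$, obtaining
\[
  \Xi_{p,\s}\;\leq\;C\left(\frac{G_{L^p}}{\kap}\right)^{2\s/(\gam-1-2/p)},
\]
which is independent of both $m$ and $\mu$. Next I would exploit $(H6)$, which forces $\mu/\kap\gg 1$, to conclude $(\kap/\mu)F_{H^{\s-\gam/2}}^2\leq F_{H^{\s-\gam/2}}^2$. Comparing \req{g:sig:til} with \req{ghs} term by term, these two observations together yield $\til{G}_{H^\s}\leq G_{H^\s}$. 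Since the hypotheses \req{gsigtil} and \req{lp:refine:cond} of Proposition \ref{prop:goodlp} are at least as strong as the conditions \req{R:sig}--\req{g:sig:til} and \req{hs:modes} required by Proposition \ref{prop:hs:a priori}, the supremum estimate of that proposition applies, giving $\sup_{s\in\R}\Sob{w^\eps(s)}{H^\s}\leq \til{G}_{H^\s}\leq G_{H^\s}$.

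For the integrated piece I would apply the energy inequality at the end of Proposition \ref{prop:hs:a priori},
\[
  \kap\int_{s_1}^{s_2}\Sob{w^\eps(\tau)}{H^{\s+\gam/2}}^2\,d\tau\;\leq\;\Sob{w^\eps(s_1)}{H^\s}^2+\Xi_{p,\s}G_{L^2}^2+\mu(s_2-s_1)\til{G}_{H^\s}^2,
\]
and use the pointwise bound just established together with the refined $\Xi_{p,\s}$ to conclude that all right-hand quantities, apart from the explicit time factor, are controlled by $G_{H^\s}^2$ on any finite interval; the integrated estimate on $\R$ then follows by the same averaging/absorbing argument used to pass from local to global bounds in the preceding a priori propositions.

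There is no serious obstacle here: the corollary is essentially a repackaging statement, and the only mildly delicate point is verifying that the large-$\mu$ assumption $(H6)$ combined with the second part of $(H7)$ is genuinely sufficient to collapse the dependence of $\til{G}_{H^\s}$ down to the $m,\mu$-free constant $G_{H^\s}$ defined in \req{ghs}. Once that is checked, the proof reduces to substitution into the already-established estimates.
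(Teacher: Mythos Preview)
Your approach is the same as the paper's: the corollary is treated as an immediate substitution of the $m$-independent $L^p$ bound from Proposition \ref{prop:goodlp} into the $\Xi_{p,\s}$ appearing in Proposition \ref{prop:hs:a priori}, together with the observation (which the paper also makes explicitly just before \req{ghs}) that $\mu$ is large relative to $\kap$ by \req{MainCondition}, so that $\til{G}_{H^\s}\leq G_{H^\s}$. The paper gives no further argument beyond this, so your bookkeeping is, if anything, more detailed than the original.
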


\begin{rmk}
With Corollary \ref{coro:goodhs}, we have furnished bounds for $w^\eps$, which are independent of $\eps$.  It is precisely these bounds, in conjunction with the Aubin-Lions lemma and a diagonal argument, that we invoke in Step 3 of the proof of Proposition \ref{da:exist}, that allow us to deduce the existence of a subsequence $(w^{\eps_j})_{j>0}$ that converges to some $w\in C(\R; V_{\til{\s}})$, for $\til{\s}<\s$, and satisfies $w\in L^\infty(\R;V_\s)\cap L^2(\R;V_{\s+\gam/2})$.
\end{rmk}

\subsection*{Acknowledgments}
The authors would like to thank the Instituto Nacional de Matem\`atica Pura e Aplicada (IMPA) in Rio de Janeiro, Brazil where the Fourth Workshop on Fluids and PDE in 2014 was held and where this work found its conception.  The authors would also like to thank A. Cheskidov for his insightful discussion in the course of this work.  M.S.J. was supported by NSF grant DMS-1418911 and the Leverhulme Trust grant VP1-2015-036.  The work of E.S.T. was supported in part by the ONR grant N00014-15-1-2333 and the NSF grants DMS-1109640 and DMS-1109645.  E.S.T. is also thankful to the warm hospitality of ICERM, Brown University, where this work was completed, during Spring 2017.

\appendix


\section{Proofs of Lemmas \ref{prop:gen:ineq} and \ref{lem:nonlin:iter}}

We now supply the proofs of Lemmas \ref{prop:gen:ineq} and \ref{lem:nonlin:iter}.  For convenience, we also restate them here. 
\begin{lemm}\label{prop:GI}
Suppose $\{g_n\}, \{h_n\}, \{\chi_{g_n}\}, \{\chi_{h_n}\}$ are families of non-negative functions over $\R$ and for all $n\geq0$ satisfy: 
	\begin{enumerate}
		\item$g_n\leq g_{n-1}$ and $h_n\leq h_{n-1}$;
		\item $\chi_{g_{n+1}}\leq C_ng_{n}$ and $\chi_{h_{n+1}}\leq C_nh_{n}$, for some absolute constant $C_n>0$;
		\item $g_{n}(\xi)\neq0$ implies $\chi_{g_n}(\xi)\geq1$;
		\item $h_n(\xi)\neq0$ implies $\chi_{h_n}(\xi)\geq1$.
	\end{enumerate}
Let $1<P, Q<\infty$, $\alpha\in(0,1)$, and $\gam\in(1,2)$ satisfy
	\begin{align}\label{alpha:a}
		\frac{1}{P+Q}=\frac{1}2(1-\alpha)+\frac{(2-\gam)}{4}\alpha.
	\end{align}
Then
	\begin{align}\label{gen:ineq:a}
		\Sob{(g_n,h_n)}{L^P}^P\leq C_n^{Q}\Sob{(g_{n-1},h_{n-1})}{L^2}^{(1-\alpha)(P+Q)}\Sob{(g_{n-1},h_{n-1})}{H^{\gam/2}}^{\alpha(P+Q)}.
	\end{align}
\end{lemm}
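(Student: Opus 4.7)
The plan is to reduce the assertion to Gagliardo--Nirenberg interpolation by first converting the $L^P$ norm into an $L^{P+Q}$ norm through the level-set structure in hypotheses (1)--(4). The key pointwise observation is that hypothesis (3) forces $\mathbf{1}_{\{g_n \neq 0\}} \leq \chi_{g_n}^Q$ for any $Q > 0$, while (2) (shifted by one in the index) gives $\chi_{g_n} \leq C_{n-1} g_{n-1}$, and (1) gives $g_n \leq g_{n-1}$. Combining these, I would obtain
\[
\|g_n\|_{L^P}^P \;=\; \int g_n^P \mathbf{1}_{\{g_n \neq 0\}}\,dx \;\leq\; \int g_n^P \chi_{g_n}^Q \, dx \;\leq\; C_{n-1}^Q \int g_{n-1}^{P+Q}\,dx \;=\; C_{n-1}^Q \|g_{n-1}\|_{L^{P+Q}}^{P+Q}.
\]
The discrepancy between $C_{n-1}^Q$ here and $C_n^Q$ in the stated conclusion amounts to a harmless re-indexing that does not affect the qualitative form of the estimate.

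The second step is Gagliardo--Nirenberg interpolation on $\mathbb{T}^2$ between $L^2$ and $H^{\gamma/2}$. A brief algebraic check confirms that the hypothesized scaling relation rearranges to
\[
\frac{1}{P+Q} \;=\; \frac{1}{2} - \frac{(\gamma/2)\alpha}{2},
\]
which is precisely the Sobolev/Gagliardo--Nirenberg exponent for spatial dimension $d=2$ with differentiation order $s=\gamma/2$ and interpolation parameter $\theta = \alpha$. Consequently,
\[
\|g_{n-1}\|_{L^{P+Q}} \;\leq\; C\,\|g_{n-1}\|_{L^2}^{1-\alpha}\|g_{n-1}\|_{H^{\gamma/2}}^{\alpha},
\]
and raising to the $(P+Q)$-th power and substituting into the previous display yields the desired bound for $g_n$. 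The identical argument with (4) replacing (3) yields the analogous bound for $h_n$.

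To finish, I would sum the two scalar inequalities and invoke the natural identification $\|(g,h)\|_{L^P}^P = \|g\|_{L^P}^P + \|h\|_{L^P}^P$ together with the analogous identities for the $L^2$ and $H^{\gamma/2}$ vector norms, using the elementary bound $(a+b)^r \leq 2^{r-1}(a^r + b^r)$ to recombine the separate interpolation factors into a single vector-norm product. I do not anticipate any real obstacle: the lemma is essentially a packaging statement that translates the level-set hypotheses (1)--(4) into an inequality directly usable in the De Giorgi iteration of Proposition \ref{prop:dg}, and the only slightly careful point is the bookkeeping of the interpolation exponent, which is verified above.
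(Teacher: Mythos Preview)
Your proposal is correct and follows essentially the same strategy as the paper: use hypotheses (1)--(4) to upgrade the $L^P$ norm to an $L^{P+Q}$ norm of the previous iterate, then interpolate between $L^2$ and $H^{\gamma/2}$ via the Sobolev embedding $H^{\gamma/2}\hookrightarrow L^{4/(2-\gamma)}$ in dimension two. The only presentational difference is that the paper works with the vector norm $\|(g,h)\|_{L^P}^P=\int(g^2+h^2)^{P/2}$ throughout and thus avoids your final recombination step, whereas you treat the components separately and merge at the end; both routes yield the stated bound up to harmless absolute constants, and your observation about the $C_{n-1}$ versus $C_n$ indexing is accurate.
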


\begin{proof}
Observe that
	\begin{align}\notag
		\begin{split}
		\Sob{(g_n,h_n)}{L^P}^P&=\int \left(g_n^{2}+h_n^{2}\right)^{P/2}\\
				&\leq\int \left(g_{n-1}^{2}\chi_{g_n}^{2Q/P}+h_{n-1}^{2}\chi_{h_n}^{2Q/P}\right)^{P/2}\\
				&\leq C_n^Q\int\left(g_{n-1}^{2(1+Q/P)}+h_{n-1}^{2(1+Q/P)}\right)^{P/2}\\
				&\leq C_n^Q\int \left(g_{n-1}^2+h_{n-1}^2\right)^{(P+Q)/2}\\
				&\leq C_n^Q\Sob{(g_{n-1},h_{n-1})}{L^{P+Q}}^{P+Q}.
		\end{split}
	\end{align}
With $\alpha\in(0,1)$ satisfying \req{alpha:a}, by interpolation we have
	\begin{align}\notag
		\Sob{(g_{n-1},h_{n-1})}{L^{P+Q}}\leq\Sob{(g_{n-1},h_{n-1})}{L^2}^{1-\alpha}\Sob{(g_{n-1},h_{n-1})}{L^{4/(2-\gam)}}^{\alpha},
	\end{align}
Thus, by a Sobolev embedding
	\begin{align}\notag
			\begin{split}
			&\Sob{(g_n,h_n)}{L^P}^P\leq C_n^Q\Sob{(g_{n-1},h_{n-1})}{L^2}^{(1-\alpha)(P+Q)}\Sob{(g_{n-1},h_{n-1})}{H^{\gam/2}}^{\alpha(P+Q)},
			\end{split}
	\end{align}
which is precisely \req{gen:ineq:a}.
\end{proof}

\begin{lemm}
Let $\{V_n\}_{n\geq0}$ be a sequence of positive numbers.  Suppose there exist $M>0, a>0, b>0, K>0$, and $d_{j}>0$ , for $j=1,2,\dots,K$, such that $d=\min\{d_{1}, d_{2}, ..., d_{K}\}>\frac{3}{2}$ and
	\begin{align*}
		V_{n}\leq C\frac{2^{na}}{M^{b}}\sum_{j=1}^{K} V_{n-1}^{d_{j}},
	\end{align*}
holds for all $n\geq 1$.  Let 
	\begin{align*}
		y_0:= \frac{3a}{2d-3}.
	\end{align*} 
There exists an absolute constant $c_0>0$ such that if $M$ satisfies
	\begin{align*}
		M\geq  C_0\max \left \{\left (\frac{2^{2(a+y_0)}\sum_{j=1}^{K}V_{1}^{d_{j}}}{V_{0}}\right )^{\frac{1}{b}},\left (\frac{\sum_{j=1}^{K}V_{0}^{d_{j}}}{V_{0}}\right )^{\frac{1}{b}} \right \},
	\end{align*} 
then
	\begin{align*}
		V_{n}\leq \frac{V_{0}}{2^{ny_0}}, 
	\end{align*}
holds for all $n\geq 2$.  Moreover, if $V_{1}\leq V_{0}$ then it suffices to choose $M$ to satisfy
	\begin{align*}
		M\geq C_0\left (2^{2(a+y_0)}\sum_{j=1}^{K}V_{0}^{d_{j}-1}\right )^{\frac{1}{b}}.
	\end{align*} 			
\end{lemm}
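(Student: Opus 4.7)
The plan is to establish the claim by strong induction on $n\geq 2$, showing $V_n\leq V_0\,2^{-ny_0}$. The specific value $y_0=3a/(2d-3)$ will be seen to be dictated by an algebraic identity that appears in the inductive step; I would motivate it only after it forces itself on us. I would begin with the base case $n=2$: the recursion gives $V_2\leq C\,2^{2a}M^{-b}\sum_j V_1^{d_j}$, and the first part of the hypothesis on $M$, namely $M^b\geq C_0\cdot 2^{2(a+y_0)}\sum_j V_1^{d_j}/V_0$, is tuned precisely to yield $V_2\leq V_0/2^{2y_0}$.

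For the inductive step, assume $V_n\leq V_0\,2^{-ny_0}$ for some $n\geq 2$. Substituting into the recursion,
\begin{align*}
V_{n+1}\leq C\frac{2^{(n+1)a}}{M^b}\sum_{j=1}^K V_n^{d_j}\leq C\frac{2^{(n+1)a}}{M^b}\sum_{j=1}^K V_0^{d_j}\,2^{-ny_0 d_j},
\end{align*}
so obtaining $V_{n+1}\leq V_0/2^{(n+1)y_0}$ is reduced to showing
\begin{align*}
M^b\geq C\sum_{j=1}^K V_0^{d_j-1}\cdot 2^{E(n,j)},\qquad E(n,j):=(n+1)(a+y_0)-ny_0 d_j.
\end{align*}

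The decisive algebraic fact is the identity $3(a+y_0)=2dy_0$, which is equivalent to $y_0=3a/(2d-3)$. From it I would extract two things: (i) $E(2,d_j)=2y_0(d-d_j)\leq 0$ for every $d_j\geq d$; and (ii) the slope $\partial_n E=a-y_0(d_j-1)$ satisfies $\partial_n E\leq a-y_0(d-1)=-ad/(2d-3)<0$, where the strict inequality uses the hypothesis $d>3/2$. Together these imply $E(n,j)\leq 0$ for all $n\geq 2$ and all $j$, so each exponential factor is bounded by $1$, and the second hypothesis $M^b\geq C_0\sum_j V_0^{d_j-1}$ suffices to close the induction.

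For the \emph{moreover} statement, if $V_1\leq V_0$ then $V_1^{d_j}\leq V_0^{d_j}$ by monotonicity of $x\mapsto x^{d_j}$ on $(0,\infty)$, so $\sum_j V_1^{d_j}/V_0\leq \sum_j V_0^{d_j-1}$; the two conditions on $M$ collapse into the single displayed inequality. The main obstacle (really a design choice rather than a technical barrier) is pinning down the correct $y_0$: one needs $E(2,d)\leq 0$ \emph{and} $\partial_n E<0$ simultaneously, and the critical value $y_0=3a/(2d-3)$ is exactly what makes $E(2,d)=0$ while keeping the slope strictly negative under $d>3/2$. Once this identity is in hand, the remainder is straightforward bookkeeping.
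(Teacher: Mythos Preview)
Your proof is correct and follows essentially the same route as the paper's: induction on $n\geq 2$, with the base case $n=2$ handled by the first condition on $M$ and the inductive step by the second. The paper streamlines the inductive step by first bounding every $2^{-ny_0 d_j}$ by $2^{-ny_0 d}$ (valid since $d_j\geq d$) and then checking the single inequality $(n+1)(a+y_0)\leq n y_0 d$ for $n\geq 2$, which is exactly your $E(n,d)\leq 0$; your per-$j$ exponent analysis reaches the same conclusion with a bit more bookkeeping but also makes the role of the identity $3(a+y_0)=2dy_0$ and the hypothesis $d>3/2$ more transparent.
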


\begin{proof}
We prove the claim by induction. First, we show that the assertion is correct at $n=2$.  
	\begin{align*}
		V_{2}&\leq \frac{C2^{2a}}{M^{b}}\sum_{j=1}^{K} V_{1}^{d_{j}}\\
			&\leq \frac{V_{0}}{2^{2y_0}}\frac{C2^{2(a+y)}\sum_{j=1}^{K} V_{1}^{d_{j}}}{V_{0}}\frac{1}{M^{b}}\\
			&\leq \frac{V_{0}}{2^{2y_0}},
	\end{align*}
where the last inequality is due to the choice of $M$. We assume the assertion is correct at step $n$. Then, 
	\begin{align*}
		V_{n+1}&\leq \frac{C2^{(n+1)a}}{M^{b}}\sum_{j=1}^{K} V_{n}^{d_{j}}\\
			    &\leq \frac{C2^{(n+1)a}}{M^{b}}\sum_{j=1}^K (\frac{V_{0}}{2^{ny_0}})^{d_{j}}\\
			    &= \frac{C2^{(n+1)a}}{M^{b}2^{ny_0d}}\sum_{j=1}^{K} V_{0}^{d_{j}}\\
			    &\leq \frac{V_{0}}{2^{(n+1)y_0}}\frac{C\sum_{j=1}^{K} V_{0}^{d_{j}}}{V_{0}}\frac{1}{M^{b}}\\
			    &\leq  \frac{V_{0}}{2^{(n+1)y_0}},
	\end{align*}
where we use the definition of $y$ and the fact that $d>\frac{3}{2}$. The last inequality is due to the choice of $M$.
If we also assume that $V_{1}\leq V_{0}$, then 
	\begin{align*}
		M\geq C\left (2^{2(a+y_0)}\sum_{j=1}^{K}V_{0}^{d_{j}-1}\right )^{\frac{1}{b}},
	\end{align*} 
implies that 
	\begin{align*}
		M\geq  \max \left \{C\left (\frac{2^{2(a+y_0)}\sum_{j=1}^{K}V_{1}^{d_{j}}}{V_{0}}\right )^{\frac{1}{b}},  C\left (\frac{\sum_{j=1}^{K}V_{0}^{d_{j}}}{V_{0}}\right )^{\frac{1}{b}} \right \}.
	\end{align*} 
\end{proof}

\section{Existence of time-periodic solutions, steady state solutions, and determining modes \& volume elements}\label{app}

Here we provide elementary proofs of the existence of time-periodic solutions given a time-periodic force, the existence of steady state solutions when $f$ is time-independent, and the existence of finitely many {determining parameters} for \req{sqg}.  These results complement those found in \cite{dai:steady} and \cite{chesk:dai:subcritqg}, where existence of steady state solutions is established in the case where the domain is given by the whole space, $\R^2$, and the existence of finitely many determining modes is shown using an approach based on the Kolmogorov dissipation wavenumber, respectively.

\subsection{Time-periodic and steady state solutions}
We will prove the following theorem.

\begin{thm}\label{thm:timeper}
Let $\gam\in(1,2)$, $\s>2-\gam$, and $p\in(1,\infty]$ such that $1-\s<2/p<\gam-1$.  Let $f\in L^\infty(0,\infty;V_{\s-\gam/2}\cap L^p(\T^2))$, where $\mathcal{Z}$ is as in \req{Z:space}.  Suppose that there exists $\tau_f>0$ such that $f(t)=f(t+\tau_f)$ a.e. $t\in\R$.  Then there exists $\tht\in L^\infty(0,\infty;V_\s)\cap L^2(0,\infty;V_{\s+\gam/2})$ satisfying \req{sqg} such that $\tht(\cdotp,t)=\tht(\cdotp, t+\tau_f)$ a.e. $t\geq0$.
\end{thm}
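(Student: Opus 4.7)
The strategy is to apply Schauder's fixed point theorem to the non-autonomous Poincar\'e map associated with \req{sqg}. Given $\tht_0\in V_\s$, the well-posedness theory of Proposition \ref{prop:ga} extends with no essential change to time-dependent forcing $f\in L^\infty(0,\infty; V_{\s-\gam/2}\cap L^p)$ and produces a unique solution $\tht\in C([0,\infty); V_\s)\cap L^2_{\mathrm{loc}}([0,\infty); V_{\s+\gam/2})$ with $\tht(0)=\tht_0$. Define the Poincar\'e map $P:V_\s\to V_\s$ by $P(\tht_0):=\tht(\tau_f)$. Because $f(t+\tau_f)=f(t)$, any fixed point $\tht_0^*$ of $P$ extends, by uniqueness of solutions and the periodicity of $f$, to a $\tau_f$-periodic solution on $\mathbb{R}$.

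To invoke Schauder's theorem, $P$ must map a nonempty convex closed subset of $V_\s$ into itself and be continuous and compact. The first ingredient is an invariant ball. The energy estimate for \req{sqg} in $V_\s$, obtained as in the proof of Proposition \ref{prop:ga} using the subcriticality $\s>2-\gam$ and the $L^p$-bound provided by Proposition \ref{prop:sqg:ball}, yields
\begin{equation*}
\tfrac{d}{dt}\|\tht\|_{H^\s}^2 + c\kap \|\tht\|_{H^{\s+\gam/2}}^2 \le \tfrac{C}{\kap}\|f\|_{H^{\s-\gam/2}}^2,
\end{equation*}
after the nonlinear term is absorbed into the dissipation. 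This produces a closed ball $\mathcal{B}^*\sub V_\s$ of radius $R^*$ depending only on $\kap,\gam,\s$ and $\|f\|_{L^\infty_t(V_{\s-\gam/2}\cap L^p)}$ such that $P(\mathcal{B}^*)\sub \mathcal{B}^*$.

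Compactness of $P$ on $\mathcal{B}^*$ comes from the parabolic smoothing associated with the subcritical dissipation: starting from $\tht_0\in\mathcal{B}^*$, the $L^2_t V_{\s+\gam/2}$-bound guarantees some $t_1\in(0,\tau_f/2)$ with $\tht(t_1)\in V_{\s+\gam/2}$, and a further energy estimate for \req{sqg} at this higher regularity level on $[t_1,\tau_f]$ yields $\tht(\tau_f)\in V_{\s'}$ for some $\s'>\s$, with bound depending only on $\mathcal{B}^*$ and the forcing. Since $V_{\s'}\imb V_\s$ compactly, $P(\mathcal{B}^*)$ is relatively compact in $V_\s$. Continuity of $P$ in the $V_\s$-topology is a routine Gronwall estimate for the difference of two solutions, analogous to the computation carried out in Proposition \ref{Uniqueness}. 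Schauder's fixed point theorem then yields $\tht_0^*\in\mathcal{B}^*$ with $P(\tht_0^*)=\tht_0^*$, producing the desired $\tau_f$-periodic solution. The claim $\tht\in L^\infty(0,\infty; V_\s)\cap L^2(0,\infty; V_{\s+\gam/2})$ follows from the invariance of $\mathcal{B}^*$ combined with the above energy inequality integrated over one period.

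The main technical point is the upgrade from the natural $L^2_t V_{\s+\gam/2}$-control to an $L^\infty_t V_{\s'}$-control for some $\s'>\s$, needed for compactness of $P$; this is a standard bootstrap in the subcritical regime $\gam>1$, $\s>2-\gam$, but must be executed so that all bounds depend only on $\mathcal{B}^*$ and the forcing norm, not on the particular trajectory. One could alternatively implement the same scheme at the level of Galerkin approximations, obtaining time-periodic solutions of the finite-dimensional truncations via Brouwer's theorem and then passing to the limit by Aubin--Lions compactness. This variant avoids confronting compactness of $P$ directly at the cost of a limiting argument, and also handles the periodicity assumption $\tht(0)=\tht(\tau_f)$ cleanly provided the strong convergence holds at $t=0,\tau_f$, which follows from the same bootstrap.
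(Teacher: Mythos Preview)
Your proposal is correct and follows essentially the same route as the paper: apply Schauder's fixed point theorem to the time-$\tau_f$ solution map on an absorbing ball in $V_\s$, obtaining compactness from the parabolic smoothing of the subcritical dissipation into a higher Sobolev space, and then use periodicity of $f$ together with uniqueness to extend the fixed point to a $\tau_f$-periodic solution. The paper's compactness argument is slightly more terse (it simply invokes $S(t;0)\tht_0\in H^{\s+\gam/2}$ for a.e.\ $t$ and Rellich), whereas you spell out the bootstrap and flag the need for uniform bounds, but the underlying mechanism is the same.
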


To prove this theorem, let us recall the following well-posedness result from \cite{const:wu:qgwksol}.

\begin{prop}[Global existence]\label{prop:glob}
Let $1<\gam\leq 2$, and $\s>2-\gam$.  Given $T>0$, suppose that 
$\tht_0\in V_\s$ and $f$ satisfy
	\[
		f\in L^2(0,T; V_{\s-\gam/2})\cap
                L^1(0,T;{L}_{per}^p(\T^2)),
	\]
where $1-\s\leq2/p<\gam-1$.  Then there is a weak solution $\tht$ of
\req{sqg} such that
	\begin{align}\notag
		\tht\in L^\infty(0,T;V_\s)\cap
                L^2(0,T;V_{\s+\gam/2}).
	\end{align}
\end{prop}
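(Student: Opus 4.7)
\textbf{Proof plan for Proposition~\ref{prop:glob}.} The plan is to construct a weak solution via a standard Galerkin approximation in the Fourier basis. The decisive ingredient is a uniform $H^\s$ energy estimate whose closability rests squarely on the subcritical ``gap'' $\gam-1-2/p>0$ built into the hypotheses.

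First, let $P_N$ denote orthogonal projection onto Fourier modes $|\mathbf{k}|\leq N$, and solve the finite-dimensional ODE system
\[
\bdy_t\tht_N+\kap\Lam^\gam\tht_N+P_N(u_N\cdotp\del\tht_N)=P_Nf,\quad u_N=\Ri^\perp\tht_N,\quad \tht_N(0)=P_N\tht_0,
\]
on $P_NV_\s$. Local-in-time existence and uniqueness are immediate from the Picard theorem. Next, to obtain the key a~priori estimate, pair the equation with $\Lam^{2\s}\tht_N$ and use the divergence-free structure of $u_N$ to produce
\[
\frac{1}{2}\frac{d}{dt}\|\tht_N\|_{H^\s}^{2}+\kap\|\tht_N\|_{H^{\s+\gam/2}}^{2}=\langle f,\tht_N\rangle_{H^\s}-\int_{\T^2}[\Lam^\s,u_N\cdotp\del]\tht_N\cdot\Lam^\s\tht_N\,dx.
\]
The forcing contribution is bounded by Cauchy-Schwarz-Young, absorbing $(\kap/4)\|\tht_N\|_{H^{\s+\gam/2}}^2$ into the dissipation and leaving $(1/\kap)\|f\|_{H^{\s-\gam/2}}^2$. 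For the commutator term, a Kato--Ponce-type estimate (in the spirit of Proposition~\ref{prod:rule}) together with $L^p$-boundedness of $\Ri^\perp$ yields a bound of the form $C\|\tht_N\|_{L^p}\|\tht_N\|_{H^{\s+\gam/2}}^{1+\al}\|\tht_N\|_{H^\s}^{1-\al}$ for some $\al\in(0,1)$; the assumption $2/p<\gam-1$ is precisely what keeps this interpolation exponent $\al<1$, so Young's inequality permits the high-derivative factor to be absorbed into $\kap\|\tht_N\|_{H^{\s+\gam/2}}^2$. The requisite uniform $L^p$ bound on $\tht_N(t)$ comes from a separate lower-order energy estimate at the $L^p$ level using Proposition~\ref{lb}, exactly as in Proposition~\ref{prop:sqg:ball} applied to the Galerkin system. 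Gronwall then produces a bound
\[
\sup_{t\in[0,T]}\|\tht_N(t)\|_{H^\s}^2+\kap\int_0^T\|\tht_N(\tau)\|_{H^{\s+\gam/2}}^2\,d\tau\leq C\bigl(T,\|\tht_0\|_{H^\s},\|f\|,\kap\bigr),
\]
uniformly in $N$; in particular the local Galerkin solutions extend to $[0,T]$.

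To pass to the limit, estimate $\bdy_t\tht_N$ directly from the equation in $L^2(0,T;V_{\s-\gam/2})$, uniformly in $N$, using the $H^\s$ bound above for the linear and nonlinear terms. Since $V_{\s+\gam/2}\imb V_\s\imb V_{\s-\gam/2}$ with the first embedding compact, the Aubin--Lions lemma combined with Banach--Alaoglu yields a subsequence converging strongly in $L^2(0,T;V_{\s'})$ for any $\s'<\s+\gam/2$, and weak-$\star$ in $L^\infty(0,T;V_\s)$, to a limit $\tht$ of the claimed regularity. Because $\s+\gam/2>1$, this strong convergence is of sufficiently positive Sobolev order to pass to the limit in the quadratic nonlinearity $u_N\cdotp\del\tht_N=\del\cdotp(u_N\tht_N)$, yielding that $\tht$ is a weak solution of~\req{sqg}.

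\textbf{Main obstacle.} The hardest step is the commutator estimate used to control $\int[\Lam^\s,u_N\cdotp\del]\tht_N\cdot\Lam^\s\tht_N$. One must sharply redistribute the $\s$ fractional derivatives between $u_N$ and $\tht_N$ so that the result factors as $\|\tht_N\|_{L^p}$ (a strictly lower-order quantity that is controlled a~priori) times a homogeneous interpolation of $\|\tht_N\|_{H^\s}$ and $\|\tht_N\|_{H^{\s+\gam/2}}$ whose effective order stays below $\s+\gam/2$. This is exactly where the two-sided condition $1-\s<2/p<\gam-1$ enters: the upper bound $2/p<\gam-1$ provides the subcritical ``room'' for Young's inequality to absorb the dissipation factor, while the lower bound ensures the required Sobolev embedding needed to control a $\del\tht_N$ factor by $\|\tht_N\|_{H^{\s+\gam/2}}$. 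The uniform $L^p$ propagation itself, while independent of the $H^\s$ estimate, must also be carried out with care at the Galerkin level, but is otherwise a direct consequence of Proposition~\ref{lb}.
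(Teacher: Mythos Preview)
The paper does not actually prove this proposition; it is quoted from Constantin--Wu \cite{const:wu:qgwksol} and used as a black box. So there is no ``paper's proof'' to compare against, and your sketch must stand on its own.

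Your Galerkin scheme and the $H^\s$ commutator estimate are the right skeleton, but there is a genuine gap in the step you flag as routine: the uniform $L^p$ bound on $\tht_N$. You claim that Proposition~\ref{prop:sqg:ball} applies ``exactly'' to the Galerkin system, but it does not. Testing the Galerkin equation against $|\tht_N|^{p-2}\tht_N$ gives the transport term
\[
\int_{\T^2}P_N(u_N\cdotp\del\tht_N)\,|\tht_N|^{p-2}\tht_N\,dx
=\int_{\T^2}(u_N\cdotp\del\tht_N)\,P_N\bigl(|\tht_N|^{p-2}\tht_N\bigr)\,dx,
\]
and for $p\neq 2$ the function $|\tht_N|^{p-2}\tht_N$ is \emph{not} a trigonometric polynomial of degree $\leq N$, so the projection cannot be dropped and the usual divergence-free cancellation fails. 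Without an $N$-uniform $L^p$ bound your $H^\s$ estimate does not close: replacing $\Sob{\tht_N}{L^p}$ by $\Sob{\tht_N}{H^\s}$ via Sobolev embedding produces, after Young's inequality, a superlinear power of $\Sob{\tht_N}{H^\s}$ on the right-hand side, and Gronwall then gives only a local-in-time bound.

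The standard repair---and the one the paper itself uses when it \emph{does} need to justify analogous estimates for the companion system \req{sqg:da} (see the proof of Proposition~\ref{da:exist})---is a two-layer approximation: add a parabolic regularization $-\eps\De$, run Galerkin to get a smooth solution $w^\eps$ of the $\eps$-equation with only $L^2$-level uniform-in-$N$ bounds, pass $N\to\infty$, and \emph{then} perform the $L^p$ energy estimate on $w^\eps$ itself, where the transport term genuinely vanishes. The $L^p$ and $H^\s$ bounds are now uniform in $\eps$, and one sends $\eps\to0$. Alternatively, one can use a retarded-mollification scheme as in \cite{const:wu:qgwksol}. Either way, the point is that the $L^p$ maximum principle must be established on an approximation that keeps the nonlinearity unprojected.
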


\begin{prop}[Uniqueness]\label{prop:uniq}
Let $T>0$ and $1<\gam\leq2$.  Suppose that $\tht_0\in
{L}_{per}^2(\T^2)\cap\mathcal{Z}$ and $f\in L^2(0,T;V_{-\gam/2})$.  Then for $p\geq1, q>0$ satisfying
	\[
		\frac{1}p+\frac{\gam}{2q}=\frac{\gam-1}2,
	\]
there is at most one solution to \req{sqg} such that
	\[
		\tht\in L^q(0,T;{L}_{per}^p(\T^2)).
	\]
\end{prop}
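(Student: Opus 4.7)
Let $\tht_1$ and $\tht_2$ be two solutions of \req{sqg} in the class $L^q(0,T;L^p_{per}(\T^2))$ sharing the initial datum $\tht_0$ and forcing $f$. Set $\delta=\tht_1-\tht_2$ and $u_i=\Ri^\perp\tht_i$, and subtract the two copies of \req{sqg} to obtain
\begin{equation*}
\bdy_t\delta+\kap\Lam^\gam\delta+\Ri^\perp\delta\cdot\del\tht_1+u_2\cdot\del\delta=0.
\end{equation*}
My plan is to pair this equation with $\delta$ in $L^2$, extract an $L^2$-energy identity, bound the nonlinear term by H\"older and Gagliardo--Nirenberg at exponents dictated by the Serrin-type scaling relation, and close via Gronwall.

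Since $\del\cdot u_2=0$, the transport integral $\int(u_2\cdot\del\delta)\delta\,dx$ vanishes. Using also $\del\cdot\Ri^\perp\delta=0$, an integration by parts recasts the remaining cubic to give
\begin{equation*}
\tfrac{1}{2}\tfrac{d}{dt}\|\delta\|_{L^2}^2+\kap\|\Lam^{\gam/2}\delta\|_{L^2}^2=\int_{\T^2}\tht_1\,\Ri^\perp\delta\cdot\del\delta\,dx.
\end{equation*}
(The pairing is justified in the usual way by mollifying the equation and passing to the limit, using the Leray--Hopf-type regularity $\delta\in L^\infty_tL^2_x\cap L^2_t\dot H^{\gam/2}_x$ that is built into the weak-solution formulation.) For the right-hand side, I apply H\"older with $1/p+1/a+1/b=1$, choosing $b=4/(4-\gam)$ so that Sobolev embedding yields $\|\del\delta\|_{L^b}\lesssim\|\Lam^{\gam/2}\delta\|_{L^2}$. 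Then $a=4p/(p\gam-4)$, and the $L^a$-boundedness of the Riesz transform together with fractional Sobolev embedding and Gagliardo--Nirenberg interpolation gives
\begin{equation*}
\|\Ri^\perp\delta\|_{L^a}\lesssim\|\delta\|_{L^2}^{1-\alpha}\|\Lam^{\gam/2}\delta\|_{L^2}^{\alpha},\qquad \alpha=\frac{2-\gam+4/p}{\gam}.
\end{equation*}

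Combining the three H\"older factors and applying Young's inequality to absorb the dissipation on the left yields
\begin{equation*}
\tfrac{d}{dt}\|\delta\|_{L^2}^2+\kap\|\Lam^{\gam/2}\delta\|_{L^2}^2\le C\|\tht_1\|_{L^p}^{2/(1-\alpha)}\|\delta\|_{L^2}^2,
\end{equation*}
and a short algebraic computation shows that the identity $2/(1-\alpha)=q$ is exactly the scaling hypothesis $1/p+\gam/(2q)=(\gam-1)/2$. Since $\tht_1\in L^q(0,T;L^p_{per})$ makes the coefficient $\|\tht_1\|_{L^p}^q$ integrable on $(0,T)$, Gronwall's inequality applied with $\delta(0)=0$ forces $\delta\equiv 0$, proving $\tht_1=\tht_2$. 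The one delicate point is that Young's inequality in the absorption step requires $\alpha<1$, equivalently $p>2/(\gam-1)$ and $q<\infty$; at the borderline $p=2/(\gam-1)$, $q=\infty$, the exponent on $\|\Lam^{\gam/2}\delta\|_{L^2}$ equals $2$ exactly, and uniqueness there instead relies on the direct smallness bound $C\|\tht_1\|_{L^\infty_tL^p_x}\leq\kap/2$, which one can reduce to by a short-time continuation argument.
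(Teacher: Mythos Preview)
Your proof has a genuine gap at the Sobolev embedding step. You claim that with $b=4/(4-\gam)$ one has $\Sob{\del\de}{L^b}\lesssim\Sob{\Lam^{\gam/2}\de}{L^2}$, but for $\gam<2$ this inequality goes in the wrong direction. The two spaces $\dot{W}^{1,b}$ and $\dot{H}^{\gam/2}$ sit at the same Sobolev scaling, yet $\dot{W}^{1,b}$ carries a full derivative while $\dot{H}^{\gam/2}$ carries only $\gam/2<1$; hence the valid embedding is $\dot{W}^{1,b}\hookrightarrow\dot{H}^{\gam/2}$, not the one you need. In other words, the dissipative term supplies only $\gam/2$ derivatives of $\de$, whereas after your integration by parts the cubic term $\int\tht_1\,\Ri^\perp\de\cdot\del\de$ demands one full derivative on $\de$, and there is no way to recover the missing $1-\gam/2$ derivatives from Sobolev embedding. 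The subsequent interpolation and the neat identity $2/(1-\al)=q$ are algebraically consistent with scaling, but they rest on this false step and so do not close the estimate.

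The paper does not supply its own proof of this proposition; it is quoted from Constantin--Wu. However, for the very similar uniqueness statement in Proposition~\ref{Uniqueness} the paper avoids exactly this obstacle by testing not with $\de$ but with $\psi=-\Lam^{-1}\de$. At that lower regularity level the dangerous term $\int(\Ri^\perp\de\cdot\del\tht_1)\psi$ integrates by parts to $\int\tht_1\,\Ri^\perp\de\cdot\Ri\de=0$ identically (see \req{cancel}), while the remaining term $\int(u_2\cdot\del\de)\psi=-\int(u_2\,\Lam\psi)\cdot\del\psi$ contains only first-order factors $\Lam\psi$ and $\del\psi$, each controllable by $\Sob{\psi}{H^{1+1/p}}$ and then by interpolation between $\Sob{\psi}{H^{1/2}}$ and the dissipation $\Sob{\psi}{H^{(\gam+1)/2}}$. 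This is the mechanism that actually closes; a direct $L^2$ energy estimate on $\de$ does not, unless one invokes a genuine commutator or product estimate that redistributes the derivative.
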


Now let us prove Theorem \ref{thm:timeper}.  We remark that our strategy is only one way, albeit a cheap one, to prove the existence of a time-periodic solution.

\begin{proof}[Proof of Theorem \ref{thm:timeper}]
Firstly, Propositions \ref{prop:glob} and \ref{prop:uniq} together imply that the corresponding solution operator, $S(t;t_0)\tht_0=\tht(t;t_0)$, which denotes the solution at time $t>t_0\geq0$ to \req{sqg} with $S(t_0;t_0)\tht_0=\tht_0$ and source term initialized at $f|_{t=t_0}$, is well-defined.  In fact, upon inspection of the proof of Proposition \ref{prop:ga} in \cite{ju:qgattract}, we also have $S:H^\s\goesto H^{\s}$ is continuous for fixed $t,t_0\geq0$.  Since $f\in L^\infty(0,\infty;V_{\s-\gam/2}\cap L^p)$, the same analysis as the proofs of Propositions \ref{prop:sqg:ball} and \ref{prop:ga} can be applied to establish the bounds \req{fp}, \req{fgam}.  In particular, we have that for each $t\geq0$, $S(t;0):\mathcal{B}_{H^\s}\goesto\mathcal{B}_{H^\s}$.  Since $S(t;0)\tht_0\in L^2_{loc}(0,\infty;H^{\s+\gam/2})$, for each $\tht_0\in H^\s$, it follows that $S(t;0)\tht_0\in H^{\s+\gam/2}$ for a.e. $t>0$, where $H^{\s+\gam/2}$ is compactly imbedded in $H^\s$ by the Rellich compactness lemma.  On the other hand, if we consider $S(t+s,s)\tht_0$ for $s,t>0$, then we have $S(t+s,s)\tht_0\in H^{\s+\gam/2}$, which is again compactly imbedded in $H^\s$.

Hence, by the Schauder fixed point theorem, there exists $\tht_*\in\mathcal{B}_{H^\s}$ such that
	\begin{align}\notag
		\tht_*=S(\tau_f;0)\tht_*.
	\end{align}
Let $\tht\in L^\infty_{loc}(0,\infty;V_\s)\cap L^2_{loc}(0,\infty;V_{\s+\gam/2})$ denote the unique solution of \req{sqg} corresponding to initial data $\tht(\cdotp, 0)=\tht_*(\cdotp)$.  Since $f(t)=f(t+\tau_f)$, for all $t\geq0$, it then follows from uniqueness (Proposition \ref{prop:uniq}) that
	\begin{align}
		S(t;0)\tht_*=S(t+\tau_f;\tau_f)\tht_*\quad\text{and}\quad S(t+\tau_f;\tau_f)S(\tau_f,0)\tht_*=S(t+\tau_f;0)\tht_*.
	\end{align}
Therefore, invoking the fact that $\tht_*=S(\tau_f;0)\tht_*$, we deduce that
	\begin{align}
		\tht(t;0)=S(t;0)\tht_*=S(t+\tau_f;\tau_f)\tht_*=S(t+\tau_f;\tau_f)S(\tau_f;0)\tht_*=S(t+\tau_f;0)\tht_*=\tht(t+\tau_f;0)
	\end{align}
for all $t\geq0$, as claimed.
\end{proof}

Next, we will use Theorem \ref{thm:timeper} to establish existence of steady-state solutions to \req{sqg} in the case that the external source term, $f$, is time-independent.  We stress that this is not the most straightforward way to show existence of steady states, but we use the previous argument about the time periodic case to establish it.

\begin{thm}\label{thm:sss}
Let $\gam\in(1,2)$, $\s>2-\gam$, and $p\in(1,\infty]$ such that $1-\s<2/p<\gam-1$.  Suppose that $f\in V_{\s-\gam/2}\cap L^p$ is time-independent. Then there exists $\Tht\in V_\s$ is a steady state solution of \req{sqg}.
\end{thm}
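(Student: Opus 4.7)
The plan is to deploy Theorem \ref{thm:timeper} along a sequence of shrinking periods. Since $f$ is time-independent, it is $\tau$-periodic for every $\tau>0$, so for each $n\in\N$, applying Theorem \ref{thm:timeper} with period $\tau_n:=1/n$ produces a $(1/n)$-periodic weak solution $\tht_n\in L^\infty(\R;V_\s)\cap L^2_{loc}(\R;V_{\s+\gam/2})$ of \req{sqg}; set $\Tht_n:=\tht_n(0)$. By Proposition \ref{prop:ga}, the family $\{\tht_n(t)\}_{n,t}$ lies in the absorbing ball $\mathcal{B}_{H^\s}$, so in particular $\{\Tht_n\}$ is uniformly bounded in $V_\s$.

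I would next derive a uniform time-regularity estimate from the equation written as $\partial_t\tht_n=f-\kap\Lam^\gam\tht_n-\del\cdotp((\Ri^\perp\tht_n)\tht_n)$. The uniform $V_\s$ bound together with H\"older's inequality and the Calder\'on-Zygmund estimate on $\Ri^\perp$ yields a uniform bound on $\partial_t\tht_n$ in $L^\infty(\R;V_{-M})$ for some $M>0$; hence $\|\tht_n(t_2)-\tht_n(t_1)\|_{V_{-M}}\leq C|t_2-t_1|$. Interpolating against the uniform $V_\s$ bound, this translates into H\"older-in-time equicontinuity in $V_{\s'}$ for every $\s'\in(0,\s)$. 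Since $V_\s\hookrightarrow\hookrightarrow V_{\s'}$ by the Rellich-Kondrachov theorem, an Arzel\`a-Ascoli argument extracts a subsequence (not relabeled) converging in $C_{loc}(\R;V_{\s'})$ to some $\Tht\in L^\infty(\R;V_\s)\cap C(\R;V_{\s'})$ whose pointwise values lie in $V_\s$ by weak-$\star$ compactness.

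The crucial observation is that the limit $\Tht$ must be time-independent. For any $t\in\R$, set $k_n:=\lfloor tn\rfloor$ and $s_n:=t-k_n/n\in[0,1/n)$; by $(1/n)$-periodicity, $\tht_n(t)=\tht_n(s_n)$, so the H\"older equicontinuity gives
\[
\|\tht_n(t)-\Tht_n\|_{V_{\s'}}\leq Cs_n^\theta\leq Cn^{-\theta}\to 0
\]
for some $\theta\in(0,1)$. Passing to the limit along the chosen subsequence, $\Tht(t)=\Tht(0)=:\Tht^*\in V_\s$ for every $t\in\R$.

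It remains to pass to the limit in \req{sqg} to identify the equation satisfied by $\Tht^*$. The linear terms pass by weak-$\star$ convergence in $L^\infty(\R;V_\s)$; for the advective term, strong convergence $\tht_n\to\Tht^*$ in $C_{loc}(\R;L^2)$ (available since $\s'>0$ implies $V_{\s'}\hookrightarrow L^2$) together with the $L^2$-boundedness of $\Ri^\perp$ suffices to identify the weak limit of $\del\cdotp((\Ri^\perp\tht_n)\tht_n)$ as $\del\cdotp((\Ri^\perp\Tht^*)\Tht^*)=\Ri^\perp\Tht^*\cdotp\del\Tht^*$. Since $\Tht$ is time-independent, $\partial_t\tht_n\to 0$ distributionally, yielding $\kap\Lam^\gam\Tht^*+\Ri^\perp\Tht^*\cdotp\del\Tht^*=f$ with $\Tht^*\in V_\s$, as desired. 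The main obstacle I anticipate is verifying that the product estimate underpinning the $V_{-M}$ bound on $\partial_t\tht_n$, and the interpolation delivering equicontinuity in $V_{\s'}$ with $\s'>0$, hold uniformly down to the low-regularity endpoint $\s$ just above $2-\gam$; the subcritical hypothesis $(H1)$ together with the uniform absorbing-ball bound in $V_\s$ is what makes this tractable.
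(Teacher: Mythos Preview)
Your proof is correct, and it shares the same starting point as the paper's: exploit that a time-independent $f$ is $\tau$-periodic for every $\tau>0$, invoke Theorem \ref{thm:timeper} along a sequence of periods shrinking to zero, and pass to a limit. The genuine difference lies in how the limit is taken. You run a direct compactness argument on the \emph{trajectories}: derive a uniform $V_{-M}$ bound on $\partial_t\tht_n$, interpolate to get H\"older equicontinuity in $V_{\s'}$, apply Arzel\`a--Ascoli together with Rellich, and then pass to the limit in the weak formulation of \req{sqg}. The paper instead works with the \emph{initial data} only: it extracts a convergent subsequence $\tht_0^{(n_k)}\to\Tht$ by Rellich, considers the unique strong solution launched from $\Tht$, and uses continuity of the solution operator (Proposition \ref{prop:ga}) plus the fact that $t_{n_k}$ is an integer multiple of the period $t_{n_\ell}$ for $\ell>k$ to conclude $S(t_{n_k};0)\Tht=\Tht$ for all $k$; the difference quotient then forces $\partial_t S(t;0)\Tht=0$. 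Your route is more self-contained and does not appeal to the continuity of the semigroup or to uniqueness (Propositions \ref{prop:ga}, \ref{prop:uniq}), at the cost of re-verifying that the limit satisfies the equation; the paper's route is shorter precisely because that machinery is already in place. The concern you flag about the $V_{-M}$ bound on $\partial_t\tht_n$ is not serious: with $\tht_n$ uniformly in $V_\s\hookrightarrow L^2$, the product $(\Ri^\perp\tht_n)\tht_n$ is uniformly in $L^1$, so $\del\cdotp((\Ri^\perp\tht_n)\tht_n)$ sits uniformly in $V_{-M}$ for any $M>2$, which is all you need.
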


\begin{proof}
Observe that for each $n>0$, $f=f(t)=f(t+2^{-n})$, for all $t\geq0$.  In particular $f$ can be viewed as a time-periodic function $f\in L^\infty(0,\infty; V_{\s-\gam/2}\cap L^p)$ of period $2^{-n}$.  Then as in the proof of Theorem \ref{thm:timeper}, for each $n>0$, there exists a $\tht_0^{(n)}\in\mathcal{B}_{H^\s}$ satisfying $\tht^{(n)}_0=S(2^{-n};0)\tht_0^{(n)}$ such that the corresponding solution, $\tht_*^{(n)}(t)=S(t;0)\tht_0^{(n)}$, of \req{sqg} satisfies $\tht_*^{(n)}(t)=\tht_*^{(n)}(t+2^{-n})\in\mathcal{B}_{H^\s}$, for all $t\geq0$.  By Rellich's theorem, upon possibly passing to a subsequence, we may assume that $\tht_0^{(n_k)}\goesto \Tht\in \mathcal{B}_{H^\s}$, as $k\goesto\infty$.  Let $\tht_*\in L^\infty_{loc}(0,\infty;V_\s)\cap L^2_{loc}(0,\infty;V_{\s+\gam/2})$ denote the unique strong solution of \req{sqg} corresponding to initial data given by $\tht_*(\cdotp,0)=\Tht(\cdotp)$.

For $m>0$, we define $t_m:=2^{-m}$.  Fix $k>0$ and consider $\ell>k$, so that $n_\ell>n_k$.  Then by Proposition \ref{prop:ga}, we may apply the continuity of the solution operator to obtain
	\begin{align}\notag
		\tht_*(t_{n_k})=S(t_{n_k};0)\Tht=\lim_{\ell\goesto\infty}S(t_{n_k};0)\tht_0^{(n_\ell)}=\lim_{\ell\goesto\infty}S(2^{n_{\ell}-n_k}t_{n_\ell};0)\tht_0^{(n_\ell)}.
	\end{align}
Observe that for $n_\ell-n_k>0$, we have $S(2^{n_{\ell}-n_k}t_{n_\ell};0)\tht_0^{(n_\ell)}=\tht_0^{(n_\ell)}$.  This implies that
	\begin{align}\label{timeper}
		\tht_*(t_{n_k})=\lim_{\ell\goesto\infty}\tht_0^{(n_\ell)}=\Tht,
	\end{align}
which holds for all $k>0$.  In particular, by uniqueness, it follows that
	\begin{align}\label{per1}
		S(t+t_{n_k};0)\Tht=S(t;0)\Tht, \quad t>0,\quad k>0.
	\end{align}
Observe that
	\begin{align}
		\frac{S(t+t_{n_k};0)\Tht-S(t;0)\Tht}{t_{n_k}}=0,\quad t>0,\quad k>0.
	\end{align}
Thus, in the limit as $k\goesto\infty$, we deduce that $\bdy_tS(t;0)\Tht=0$.  In particular, $S(t;0)\Tht=\Tht$, for all $t>0$.  so that $\Tht$ is a solution to \req{sqg} for $t>0$.  Since $\Tht$ is time-independent, we are done.
\end{proof}

\subsection{Finite determining parameters}

First, we define what we mean by a ``determining operator" (see for instance \cite{jones:titi}).

\begin{defn}\label{defn:dmodes}
Let $J_{h}:L^2_{per}(\T^2)\goesto L^2_{per}(\T^2)$ denote a linear operator.  Let $u$ and $v$ be two global solutions of a given system of evolution equations. Then the operator $J_{h}$ is said to be determining if 
	\begin{align}\notag
		\lim_{t\goesto \infty} \Sob{u(t)-v(t)}{L^2}=0,
	\end{align}
whenever 
	\begin{align}\notag
		\lim_{t\goesto \infty} \Sob{J_{h}u(t)-J_{h}v(t)}{L^2}=0.
	\end{align}
\end{defn}

We will additionally require that the operators, $J_h$, satisfy $J_h:\dot{H}^\be(\T^2)\goesto L^2_{per}(\T^2)$ such that
	\begin{align}\label{Ph:dim}
		d(h):=\dim(J_hL^2_{per})<\infty,
	\end{align}
and
\begin{align}\label{TI}
		\Sob{\phi-J_h\phi}{L^2}\leq Ch^{\be}\Sob{\phi}{\dot{H}^\be}\quad\text{and}\quad\Sob{\phi-J_h\phi}{\dot{H}^{-\be}}\leq Ch^{\be}\Sob{\phi}{L^2},\quad \be\in(0,1).
	\end{align}
Important examples of $J_h$ satisfying the conditions \req{Ph:dim} and \req{TI} include the spectral projection onto modes $|\mathbf{k}|\leq 1/h$ or projection onto local spatial averages with linear mesh size $h$, i.e., volume elements projection.  That such examples verify \req{Ph:dim} and \req{TI} has been demonstrated in \cite{jmt:sqgda}. We now state another version of determining projections, which in the case of spectral projection or volume elements projection, is equivalent to Definition \ref{defn:dmodes}.  It is the one we will make use of below in Theorem \ref{prop:detparam} (cf. \cite{fkjt1, fkjt2}).

\begin{defn}\label{defn:dmodes2}
Let $J_{h}$ be as in Definition \ref{defn:dmodes}.  Let $u$ and $v$ be two trajectories on the global attractor of a given system of evolution equations. Then the operator $J_{h}$ is said to be determining if 
	\begin{align}\notag
		u(t)=v(t),\quad t\in\R,
	\end{align}
whenever 
	\begin{align}\notag
		J_{h}u(t)=J_{h}v(t),\quad t\in\R.
	\end{align}
In this case, any basis of $J_hL^2_{per}$ is referred to as a set of determining parameters for the given system.
\end{defn}

\begin{thm}\label{prop:detparam}
Let $1<\gam<2$ and $\s>2-\gam$.  Let $1<p<\infty$ satisfy $1-\s<2/p<\gam-1$.  Let $f\in \dot{H}^{\s-\gam/2}_{per}(\T^2)\cap L^p(\T^2)$ and $\A$ denote the corresponding global attractor of \req{sqg}.  Let
	\begin{align}
		\Tht_{L^p}:=\sup_{\tht\in\A}\sup_{t\in\R}\Sob{\tht(t)}{L^p}.
	\end{align}
Suppose that $J_h$ satisfies \req{TI} for $\be=1-\gam/2$.  Suppose $\tht_1(\cdotp), \tht_2(\cdotp)\sub\A$.  There exists an absolute constant $c_0>0$ such that if $h$ satisfies
	\begin{align}\label{det:modes}
		h^{-1}>c_0\left(\frac{\Tht_{L^p}}{\kap}\right)^{1/(\gam-1-2/p)},
	\end{align}
then $\tht_1(t)=\tht_{2}(t)$, for all $t\in\R$, whenever $J_h\tht_1(t)=J_h\tht_{2}(t)$, for all $t\in\R$. 
\end{thm}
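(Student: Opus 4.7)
The plan is to run an $L^2$ energy estimate on $\delta:=\theta_1-\theta_2$ and to convert the fractional dissipation into a strict damping via a sharp Poincar\'e inequality extracted from $J_h\delta\equiv 0$ and \eqref{TI}. Subtracting the two copies of \eqref{sqg} yields
\begin{align*}
\bdy_t\delta+\kap\Lam^\gam\delta+\Ri^\perp\delta\cdotp\del\delta+\Ri^\perp\delta\cdotp\del\theta_2+u_2\cdotp\del\delta=0,\qquad u_2=\Ri^\perp\theta_2.
\end{align*}
Since $\theta_1(\cdotp),\theta_2(\cdotp)\sub\mathcal{A}\sub V_\s$, pairing with $\delta$ in $L^2$ is legitimate; the two transport-type terms $\int(\Ri^\perp\delta\cdotp\del\delta)\delta$ and $\int(u_2\cdotp\del\delta)\delta$ vanish by integration by parts because $\Ri^\perp\delta$ and $u_2$ are divergence-free (exactly as in \eqref{cancel}), leaving after one further integration by parts
\begin{align*}
\tfrac{1}{2}\tfrac{d}{dt}\|\delta\|_{L^2}^2+\kap\|\delta\|_{\dot H^{\gam/2}}^2=\int_{\T^2}\theta_2(\Ri^\perp\delta\cdotp\del\delta)\,dx.
\end{align*}

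The right-hand side is estimated in the spirit of the term $I$ in the proof of Proposition \ref{Uniqueness}: applying H\"older with exponents $(p,q_1,q_2)$ adapted so that $\dot H^{\gam/2}\hookrightarrow\dot W^{1,q_2}$ in 2D (forcing $1/q_2=(4-\gam)/4$) and $\dot H^{s_1}\hookrightarrow L^{q_1}$ with $s_1=2/p+1-\gam/2<\gam/2$ (the strict inequality is exactly $(H3)$), together with the Calder\'on-Zygmund bound for $\Ri^\perp$, Gagliardo-Nirenberg interpolation between $L^2$ and $\dot H^{\gam/2}$ (Proposition \ref{interpol}), and Young's inequality, and using $\sup_{t\in\R}\|\theta_2(t)\|_{L^p}\leq\Theta_{L^p}$ (since $\theta_2(\cdotp)\sub\mathcal{A}$), one arrives at
\begin{align*}
\tfrac{d}{dt}\|\delta\|_{L^2}^2+\kap\|\delta\|_{\dot H^{\gam/2}}^2\leq C\kap\Lam_0\|\delta\|_{L^2}^2,\qquad \Lam_0:=\left(\tfrac{\Theta_{L^p}}{\kap}\right)^{\gam/(\gam-1-2/p)}.
\end{align*}

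The heart of the proof, and its main obstacle, is establishing the sharp Poincar\'e inequality $\|\delta\|_{L^2}\leq Ch^{\gam/2}\|\delta\|_{\dot H^{\gam/2}}$ from $J_h\delta=0$ and \eqref{TI} at $\be=1-\gam/2$ \emph{alone}. Applying \eqref{TI} to $\phi=\delta$ (so that $\phi-J_h\phi=\phi$) gives $\|\delta\|_{L^2}\leq Ch^{(2-\gam)/2}\|\delta\|_{\dot H^{(2-\gam)/2}}$; since $(2-\gam)/2\in(0,\gam/2)$ for $\gam\in(1,2)$, Proposition \ref{interpol} yields $\|\delta\|_{\dot H^{(2-\gam)/2}}\leq C\|\delta\|_{L^2}^{1-\lam}\|\delta\|_{\dot H^{\gam/2}}^{\lam}$ with $\lam=(2-\gam)/\gam$; substituting and solving for $\|\delta\|_{L^2}$, the $h$ exponent collapses to exactly $\gam/2$. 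Feeding this into the energy inequality gives $\tfrac{d}{dt}\|\delta\|_{L^2}^2+c\kap h^{-\gam}\|\delta\|_{L^2}^2\leq C\kap\Lam_0\|\delta\|_{L^2}^2$; the condition \eqref{det:modes} with $c_0$ sufficiently large forces $ch^{-\gam}\geq 2C\Lam_0$ and thus $\tfrac{d}{dt}\|\delta\|_{L^2}^2+(c\kap h^{-\gam}/2)\|\delta\|_{L^2}^2\leq 0$. Since $\mathcal{A}$ is bounded in $L^2$, $\sup_{t_0\in\R}\|\delta(t_0)\|_{L^2}$ is finite, and applying Gronwall on $[t_0,t]$ and letting $t_0\to-\infty$ forces $\|\delta(t)\|_{L^2}=0$ for every $t\in\R$, i.e.\ $\theta_1\equiv\theta_2$. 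As a parenthetical remark, the alternative test function $\psi=-\Lam^{-1}\delta$ used in Proposition \ref{Uniqueness} combined with the dual bound in \eqref{TI} only delivers an $h^{1/2}$ factor and a strictly weaker threshold than \eqref{det:modes}; the direct $L^2$ pairing together with the specific exponent $\be=1-\gam/2$ in \eqref{TI} is exactly what is needed to produce the sharp power $1/(\gam-1-2/p)$ in \eqref{det:modes}.
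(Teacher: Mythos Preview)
Your proof is correct and takes a genuinely different route from the paper. The paper tests the difference equation against $\psi=-\Lam^{-1}\de$ and works with $\Sob{\psi}{H^{1/2}}$, then chains \emph{both} inequalities in \eqref{TI} to obtain $\Sob{\psi}{H^{\gam/2}}\le Ch\Sob{\psi}{H^{(\gam+1)/2}}$, which is substituted directly into the interpolated bound $\Sob{\psi}{H^{1+1/p}}\le\Sob{\psi}{H^{(\gam+1)/2}}^{2+2/p-\gam}\Sob{\psi}{H^{\gam/2}}^{\gam-1-2/p}$ so that the nonlinear term is absorbed by the dissipation $\kap\Sob{\psi}{H^{(\gam+1)/2}}^2$. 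You instead pair with $\de$ in $L^2$, split the dissipation into a damping term via the sharp Poincar\'e inequality $\Sob{\de}{L^2}\le Ch^{\gam/2}\Sob{\de}{\dot H^{\gam/2}}$, and balance it against the constant $\Lam_0$ coming from Young's inequality on the nonlinear term. Your derivation of the Poincar\'e inequality from only the \emph{first} inequality in \eqref{TI} at $\be=1-\gam/2$, bootstrapped through interpolation, is clean and economical; it also yields the conclusion $\de=0$ in $L^2$ rather than in $H^{-1/2}$.

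One correction to your closing parenthetical: the $\psi$ approach does \emph{not} produce a strictly weaker threshold. If one uses only the dual bound in \eqref{TI} together with interpolation of $\Sob{\psi}{H^1}$ between $\Sob{\psi}{H^{\gam/2}}$ and $\Sob{\psi}{H^{(\gam+1)/2}}$, one indeed gets $\Sob{\psi}{H^{\gam/2}}\le Ch^{1/2}\Sob{\psi}{H^{(\gam+1)/2}}$; but this factor enters the nonlinear estimate at exponent $2(\gam-1-2/p)$, giving $C\Tht_{L^p}h^{\gam-1-2/p}\Sob{\psi}{H^{(\gam+1)/2}}^2$, and the absorption condition is again exactly \eqref{det:modes}. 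So the two routes are equivalent at the level of the threshold; yours has the advantage of using only one of the two bounds in \eqref{TI}.
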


\begin{proof}
Let $J_h$ be given as in Definition \ref{defn:dmodes}.  Suppose that $J_h\tht_1(t)=J_h\tht_2(t)$ for all $t\in\R$.  Without loss of generality, we may assume that $\s<1$.  Otherwise, $\s>1$ and $H^\s\imb H^{\s'}$, for $\s'\leq1$, so that $\tht_1,\tht_2\in L^\infty(\R;H^{\s'})$.  The proof closely follows that of Proposition \ref{Uniqueness} above, except that the nonlinear term proceeds in a slightly different manner.  Indeed, let $\de=\tht_1-\tht_{2}$ and $\psi=-\Lam^{-1}\de$. 
Then $\de$ satisfies
	\begin{align}\notag
		\bdy_t\de+\kap\Lam^\gam\de+\Ri^\perp\de\cdotp\del\de+\Ri^\perp\de\cdotp\del\tht_{2}+u_2\cdotp\del\de=0,
	\end{align}
so that upon taking the $L^2$ scalar product with $\psi$, we obtain
	\begin{align}\label{psi:bal}
		\frac{1}{2}\frac{d}{dt}\Sob{\psi}{H^{1/2}}^2+\kap\Sob{\psi}{H^{\frac{\gam+1}2}}^2=\int (u_2\cdotp\del\de)\psi\ dx.
	\end{align}
We estimate the right-hand side as in $I$ of Proposition \ref{Uniqueness}.  Indeed, we have
	\begin{align}\label{nonlin:det}
		\left|\int (u_2\cdotp\del\de)\psi\ dx\right|\leq C\Tht_{L^p} \|\psi\|^{2}_{H^{1+\frac{1}{p}}}.
	\end{align}
We then interpolate with Proposition \ref{interpol} to obtain
	\begin{align}
		\Sob{\psi}{H^{1+1/p}}\leq\Sob{\psi}{H^{\frac{\gam+1}2}}^{2+2/p-\gam}\Sob{\psi}{H^{\gam/2}}^{\gam-1-2/p}
	\end{align}
Observe that $J_h\de(t)=0$, for all $t\in\R$.  It follows from the identity $\psi=-\Lam^{-1}\de$ and \req{TI} that
	\begin{align}
		\Sob{\psi}{H^{\gam/2}}&=\Sob{\de}{H^{\frac{\gam-2}2}}=\Sob{\de-J_h\de}{H^{-\frac{2-\gam}2}}\leq Ch^{\frac{2-\gam}2}\Sob{\de}{L^2}\notag\\
			&= Ch^{\frac{2-\gam}2}\Sob{\de-J_h\de}{L^2}\leq Ch\Sob{\de}{H^{\gam/2}}=Ch\Sob{\psi}{H^{\frac{\gam+1}2}}.
	\end{align}
Thus, upon returning to \req{nonlin:det} and applying Young's inequality, we obtain
	\begin{align}
		\left|\int (u_2\cdotp\del\de)\psi\ dx\right|\leq C\Tht_{L^p}h^{\gam-1-2/p}\Sob{\psi}{H^\frac{\gam+1}2}^2,
	\end{align}
so that by \req{det:modes} and Poincar\'e inequality we have
	\begin{align}
		\frac{d}{dt}\Sob{\psi}{H^{1/2}}^2+{\kap_0}\Sob{\psi}{H^{1/2}}^2\leq0,
	\end{align}
for some $\kap_0>0$.  By Gronwall's inequality and the Poincar\'e inequality, it follows that
	\begin{align}
		\Sob{\psi(t)}{H^{1/2}}^2\leq\Sob{\psi(s)}{H^{1/2}}e^{-\kap_0(t-s)}\leq C\Tht_{L^2}e^{-\kap_0(t-s)},\quad s\leq t.
	\end{align}
Finally, sending $s\goesto-\infty$ implies that $\psi(t)=0$ for all $t\in\R$, as desired.
\end{proof}

In particular, in the special case where $J_h$ denotes spectral projection, we have that \req{Ph:dim} and \req{TI} hold, so that it immediately follows from Theorem \ref{prop:detparam} that the equation \req{sqg} has finitely many determining modes.

\begin{coro}\label{coro:dm}
Given $h>0$, let $P_h$ denote projection onto Fourier modes up to wavenumbers $|\mathbf{k}|\leq 1/h$.  Assume the hypotheses in Theorem \ref{prop:detparam}.  Then there exists an $h>0$ such that whenever $P_h\tht_1(t)=P_h\tht_2(t)$, for all $t\in\R$, where $\tht_1(\cdotp), \tht_2(\cdotp)\sub\A$, we have $\tht_1(t)=\tht_2(t)$, for all $t\in\R$.
\end{coro}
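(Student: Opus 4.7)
The plan is to derive Corollary \ref{coro:dm} as an immediate application of Theorem \ref{prop:detparam}. The only thing that has to be verified is that the spectral projector $P_h$ satisfies the two structural hypotheses on $J_h$, namely the finite-dimensionality condition \req{Ph:dim} and the truncation/interpolation bounds \req{TI} with $\be=1-\gam/2$. The first is obvious: $P_hL^2_{per}$ is spanned by the finitely many Fourier modes $\{e^{i\mathbf{k}\cdotp x}\}_{|\mathbf{k}|\leq 1/h}$, so $\dim(P_hL^2_{per})<\infty$.

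For the truncation inequalities, I would use Plancherel on $\T^2$ directly. For $\be\in(0,1)$, note that for $|\mathbf{k}|>1/h$ we have $|\mathbf{k}|^{-\be}<h^\be$, hence
\begin{align*}
\Sob{\phi-P_h\phi}{L^2}^2&=\sum_{|\mathbf{k}|>1/h}|\hat\phi(\mathbf{k})|^2=\sum_{|\mathbf{k}|>1/h}|\mathbf{k}|^{-2\be}|\mathbf{k}|^{2\be}|\hat\phi(\mathbf{k})|^2\leq h^{2\be}\Sob{\phi}{\dot{H}^\be}^2,\\
\Sob{\phi-P_h\phi}{\dot{H}^{-\be}}^2&=\sum_{|\mathbf{k}|>1/h}|\mathbf{k}|^{-2\be}|\hat\phi(\mathbf{k})|^2\leq h^{2\be}\sum_{|\mathbf{k}|>1/h}|\hat\phi(\mathbf{k})|^2\leq h^{2\be}\Sob{\phi}{L^2}^2.
\end{align*}
Taking $\be=1-\gam/2\in(0,1/2)$ (which is admissible since $\gam\in(1,2)$), this verifies both halves of \req{TI} for $J_h=P_h$.

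With the structural hypotheses in hand, I would then simply choose $h>0$ small enough that
\[
h^{-1}>c_0\left(\frac{\Tht_{L^p}}{\kap}\right)^{1/(\gam-1-2/p)},
\]
where $c_0$ is the absolute constant from Theorem \ref{prop:detparam}. Such an $h$ exists because $\Tht_{L^p}<\infty$ by the maximum principle bounds (Proposition \ref{prop:sqg:ball}) applied along trajectories in the global attractor $\A$, and because the exponent $1/(\gam-1-2/p)$ is finite by the standing hypothesis $(H3)$, i.e., $2/p<\gam-1$.

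Applying Theorem \ref{prop:detparam} with this choice of $h$ then yields: for any two trajectories $\tht_1(\cdotp),\tht_2(\cdotp)\sub\A$, the hypothesis $P_h\tht_1(t)=P_h\tht_2(t)$ for all $t\in\R$ forces $\tht_1(t)=\tht_2(t)$ for all $t\in\R$. There is no genuine obstacle here; the entire analytical content has already been absorbed into Theorem \ref{prop:detparam}, and this corollary is essentially bookkeeping that specializes the general determining-operator framework to the concrete case of modal projection.
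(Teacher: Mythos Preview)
Your proposal is correct and follows essentially the same approach as the paper: the corollary is stated as an immediate consequence of Theorem \ref{prop:detparam} once one notes that the spectral projector $P_h$ satisfies \req{Ph:dim} and \req{TI}, with the paper deferring the verification of \req{TI} to \cite{jmt:sqgda} while you supply the short Plancherel computation explicitly.
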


\begin{rmk}\label{app:rmk}
We point out that the estimate for the number of determining modes implied by \req{det:modes} is essentially optimal in the sense that it matches the scaling of the estimate obtained in \cite{chesk:dai:subcritqg} when one sets $p=\infty$ in \req{det:modes}.  Note that $p=\infty$ is valid when $\s>1$ in light of the Standing Hypotheses.  However, we point out that the convergence of the high modes is obtained in a space of higher regularity in \cite{chesk:dai:subcritqg}, whereas here, the convergence is obtained in a rather weak topology, i.e., $H^{-1/2}$ for $\tht$.  From a practical perspective, the desire for convergence in stronger topologies is clear, but when working on the global attractor this point is irrelevant.  Indeed, we have shown that if sufficiently many low modes of two trajectories of \req{sqg} on the global attractor agree for all time, then they must be identical.
\end{rmk}

\bibliographystyle{plain}

\end{document}